\long\def\@makecaption#1#2{
  \vskip 0.8ex
  \setbox\@tempboxa\hbox{\small {\bf #1:} #2}
  \parindent 1.5em  
  \dimen0=\hsize
  \advance\dimen0 by -3em
  \ifdim \wd\@tempboxa >\dimen0
  \hbox to \hsize{
    \parindent 0em
    \hfil 
    \parbox{\dimen0}{\def\baselinestretch{0.96}\small
      {\bf #1.} #2
    } 
    \hfil}
  \else \hbox to \hsize{\hfil \box\@tempboxa \hfil}
  \fi
}
\newcommand{\statprob}{\ensuremath{P}}
\newcommand{\defn}{\ensuremath{: \, = }}
\newcommand{\subexpbound}{\ensuremath{\Lambda}}
\newcommand{\subexpparam}{\ensuremath{\tau}}
\newcommand{\subexpprob}{\ensuremath{a}}
\newcommand{\subexpexp}{\ensuremath{\alpha}}
\newcommand{\smoothedfunc}[1][]{%
  \ifthenelse{\isempty{#1}}{%
    f_\smoothingdist
  }{%
    f_{\smoothingdist_{#1}}
  }
}
\long\def\comment#1{}
\begin{document}

\begin{center}
  {\bf{\LARGE { Randomized Smoothing for Stochastic Optimization }}}

\vspace*{.15in}

\begin{tabular}{ccc}
  John C. Duchi$^1$ & Peter Bartlett$^{1,2}$ & Martin J. Wainwright$^{1,2}$ \\
  jduchi@eecs.berkeley.edu & bartlett@eecs.berkeley.edu &
  wainwrig@eecs.berkeley.edu
\end{tabular}

\vspace*{.15in}

\begin{tabular}{cc}
  Department of Electrical Engineering and Computer Sciences$^1$ &
  Department of Statistics$^2$
\end{tabular} \\
University of California Berkeley, Berkeley, CA 94720
\vspace*{.15in}

March 2012

\begin{abstract}
  We analyze convergence rates of stochastic optimization procedures
  for non-smooth convex optimization problems.  By combining
  randomized smoothing techniques with accelerated gradient methods,
  we obtain convergence rates of stochastic
  optimization procedures, both in expectation and with high
  probability, that have optimal dependence on the variance of
  the gradient estimates. To the best of our knowledge, these are the first
  variance-based rates for non-smooth optimization. We give several
  applications of our results to statistical estimation problems, and
  provide experimental results that demonstrate the effectiveness of
  the proposed algorithms.  We also describe how a combination of our
  algorithm with recent work on decentralized optimization yields a
  distributed stochastic optimization algorithm that is order-optimal.
\end{abstract}

\end{center}

\section{Introduction}


In this paper, we develop and analyze randomized smoothing procedures for
solving the following class of stochastic optimization problems.  Let $\{
F(\cdot \, ; \statsample), \, \statsample \in \statsamplespace \}$ be a
collection of real-valued functions, each with domain containing the closed
convex set $\xdomain \subseteq \real^d$.  Letting $\statprob$ be a probability
distribution over the index set $\statsamplespace$, consider the function $f:
\xdomain \rightarrow \real$ defined via
\begin{align}
  \label{eqn:def-F}
  f(x) & \defn \E \big[F(x; \statsample) \big] \; = \;
  \int_\statsamplespace F(x; \statsample) d \statprob(\statsample).
\end{align}
In this paper, we analyze a family of randomized smoothing procedures
for solving potentially non-smooth stochastic optimization problems of
the form
\begin{equation}
  \label{eqn:objective}
  \min_{x \in \xdomain} ~ \big \{ f(x) + \regularizer(x) \big \},
\end{equation}
where $\regularizer: \xdomain \rightarrow \real$ is a known
regularizing function.  Throughout the paper, we assume that $f$ is
convex on its domain $\xdomain$.  This condition is satisfied, for
instance, if the function $F(\cdot; \statsample)$ is convex for
$\statprob$-almost every $\statsample$.  We assume that $\regularizer$
is closed and convex, but we allow for non-differentiability so that
the framework includes the $\ell_1$-norm and related regularizers.

While we will later discuss the effects that $\regularizer(x)$ has
on our optimization procedures, throughout we will mostly consider the
properties of the stochastic function
$f$. Problem~\eqref{eqn:objective} is challenging mainly for two
reasons. First, the function $f$ may be non-smooth. Second, in many
cases, $f$ cannot actually be evaluated. When $\statsample$ is
high-dimensional, the integral~\eqref{eqn:def-F} cannot be efficiently
computed, and in statistical learning problems we usually do not even
know what the distribution $\statprob$ is. Thus, throughout this work,
we assume only that we have access to a stochastic oracle that allows
us to get i.i.d.\ samples $\statsample \sim \statprob$, and
consequently we focus on stochastic gradient procedures for the convex
program~\eqref{eqn:objective}.

To address the first difficulty mentioned above---namely that $f$ may
be non-smooth---several researchers have considered techniques for
smoothing the objective.  Such approaches for deterministic non-smooth
problems are by now well-known, and include Moreau-Yosida
regularization (e.g.~\cite{LemarechalSa97}),
methods based on recession functions~\cite{Ben-TalTe89}; and a method
that uses conjugate and proximal functions~\cite{Nesterov05b}.
Several works study methods to replace constraints $f(x) \le 0$ in
convex programming problems with exact penalties $\max\{0, f(x)\}$ in
the objective, after which smoothing is applied to the $\max\{0,
\cdot\}$ operator (e.g., see the paper~\cite{ChenMa96} and references
therein). The difficulty of such approaches is that most require quite
detailed knowledge of the structure of the function $f$ to be
minimized and hence are impractical in stochastic settings.

The second difficulty of solving the convex
program~\eqref{eqn:objective} is that the function cannot actually be
evaluated except through stochastic realizations of $f$ and its
(sub)gradients. In this paper, we develop an algorithm for solving
problem~\eqref{eqn:objective} based on stochastic subgradient methods.
Although such methods are
classical~\cite{RobbinsMo51,Ermoliev69,PolyakJu92}, recent work by
Juditsky et al.~\cite{JuditskyNeTa08} and Lan~\cite{Lan09thesis,Lan10}
has shown that if $f$ is smooth---its gradients are
Lipschitz continuous---convergence rates dependent on the variance of
the stochastic gradient estimator are achievable. Specifically, if
$\stddev^2$ is the variance of the gradient estimator, the convergence
rate of the resulting stochastic optimization procedure is
$\order(\stddev / \sqrt{T})$. Of particular relevance to our study is
the following fact: if the oracle (instead of returning just a single
estimate) returns $m$ unbiased estimates of the gradient, the variance
of the gradient estimator is reduced by a factor of $m$. Dekel
et al.~\cite{DekelGiShXi12} exploit this fact to develop asymptotically
order-optimal distributed optimization algorithms, as we discuss in
the sequel.

To the best of our knowledge, there is no work on \emph{non-smooth}
stochastic problems for which a reduction in the variance of the
stochastic estimate of the true subgradient gives an improvement in
convergence rates. For non-smooth stochastic optimization, known
convergence rates are dependent only on the Lipschitz constant of the
functions $F(\cdot; \statsample)$ and the number of actual updates
performed. Within the oracle model of convex
optimization~\cite{NemirovskiYu83}, the optimizer has access to a
black-box oracle that, given a point $x \in \xdomain$, returns an
unbiased estimate of a (sub)gradient of the objective $f$ at the point
$x$. In most stochastic optimization procedures, an algorithm updates
a parameter $x_t$ at every iteration by querying the oracle for one
stochastic subgradient; we consider the natural extension to the case
when the optimizer issues several queries to the stochastic oracle at
every iteration.

A convolution-based smoothing technique amenable to non-smooth
stochastic optimization problems is the starting point for our
approach.  A number of authors
(e.g.,~\cite{KatkovnikKu72,Rubinstein81,LakshmananFa08,YousefianNeSh12})
have noted that particular random perturbations of the variable $x$
transform $f$ into a smooth function.  The intuition underlying such
approaches is that convolving two functions yields a new function that
is at least as smooth as the smoothest of the two original
functions. In particular, let $\smoothingdist$ denote the density of a
random variable with respect to Lebesgue measure, and consider the
smoothed objective function
\begin{equation}
  \label{eqn:convolve}
  \smoothedfunc(x)
  \defeq \int_{\R^d} f(x + y) \smoothingdist(y) dy
  = \E_\smoothingdist[f(x + Z)],
\end{equation}
where $Z$ is a random variable with probability density
$\smoothingdist$. Clearly, $\smoothedfunc$ is convex whenever $f$
is convex; moreover, it is known that if $\smoothingdist$ is a density
with respect to Lebesgue measure, then $\smoothedfunc$ is
differentiable~\cite{Bertsekas73}.

We analyze minimization procedures that solve the non-smooth
problem~\eqref{eqn:objective} by using stochastic gradient samples from the
smoothed function~\eqref{eqn:convolve} with appropriate choice of
smoothing density $\smoothingdist$.  The main contribution of our
paper is to show that the ability to issue several queries to the
stochastic oracle for the original objective~\eqref{eqn:objective} can
give faster rates of convergence than a simple stochastic oracle. Our
two main theorems quantify the above statement in terms of expected
values (Theorem~\ref{theorem:main-theorem-no-horizon}) and, under an
additional reasonable tail condition, with high probability
(Theorem~\ref{theorem:main-theorem-highprob-no-horizon}). One
consequence of our results is that a procedure that queries the
non-smooth stochastic oracle for $m$ subgradients at iteration $t$
achieves rate of convergence $\order(\radius \lipobj / \sqrt{Tm})$ in
expectation and with high probability.  (Here $\lipobj$ is the
Lipschitz constant of the function $f$ and $\radius$ is the
$\ell_2$-radius of its domain.)  As we discuss in
Section~\ref{sec:corollaries}, this convergence rate is optimal up to
constant factors.  Moreover, this fast rate of convergence has
implications for applications in statistical problems, distributed
optimization, and other areas, as discussed in
Section~\ref{sec:applications}.

The remainder of the paper is organized as follows.  In the next section, we
review standard techniques for stochastic optimization, noting a few
of their deficiencies. After this, we state our algorithm and main theorems
achieving faster rates of convergence for non-smooth stochastic problems using
the randomized smoothing technique~\eqref{eqn:convolve}. We make strong use of
the fine analytic properties of randomized smoothing, and collect several
relevant results in Appendix~\ref{appendix:smoothing}. In
Section~\ref{subsec:applications}, we outline several applications of
the smoothing techniques, which we complement in Section~\ref{sec:experiments}
with experiments and simulations showing the merits of our new
approach. Section~\ref{sec:main-result-proofs} contains proofs of our main
results, though we defer more technical aspects to the appendices.

\paragraph{Notation:}

For the reader's convenience, here we specify notation as well as a few
definitions.  We use $B_p(x, u) = \{y \in \R^d \mid \norm{x - y}_p \le u\}$ to
denote the closed $p$-norm ball of radius $u$ around the point $x$. Addition
of sets $A$ and $B$ is defined as the Minkowski sum in $\R^d$, that is, $A + B
= \{x \in \R^d \mid x = y + z, y \in A, z \in B\}$, and multiplication of a
set $A$ by a scalar $\alpha$ is defined to be $\alpha A \defeq \{\alpha x \mid
x \in A\}$. For any function or distribution $\smoothingdist$, we let $\supp
\smoothingdist \defeq \{x \mid f(x) \neq 0\}$ denote its support.  Given a
convex function $f$ with domain $\xdomain$, for any $x \in \xdomain$, we use
$\partial f(x)$ to denote its subdifferential. We define the shorthand
notation $\norm{\partial f(x)} = \sup \{ \norm{g} \, \mid \, g \in \partial
f(x) \}$ for any norm $\norm{\cdot}$.  The dual norm $\dnorm{\cdot}$ with the
norm $\norm{\cdot}$ is defined as $\dnorm{z} \defeq \sup_{\norm{x} \le 1} \<z,
x\>$.  A function $f$ is $\lipobj$-Lipschitz with respect to the norm
$\norm{\cdot}$ over $\xdomain$ if
\begin{equation*}
  |f(x) - f(y)| \le \lipobj \norm{x - y}
\end{equation*}
for all $x, y \in \xdomain$. For convex $f$, it is
known~\cite{HiriartUrrutyLe96} that $f$ is $\lipobj$-Lipschitz in this
sense if and only if $\sup_{x \in \xdomain} \dnorm{\partial f(x)} \leq
\lipobj$.
We say the gradient of $f$ is $\lipgrad$-Lipschitz continuous with
respect to the norm $\norm{\cdot}$ over $\xdomain$ if
\begin{equation*}
  \dnorm{\nabla f(x) - \nabla f(y)} \le \lipgrad \norm{x - y}
  ~~~ \mbox{for} ~
  x, y \in \xdomain.
\end{equation*}
A function $\prox$ is strongly convex with respect to a norm $\norm{\cdot}$
over $\xdomain$ if for all $x, y, \in \xdomain$,
\begin{equation*}
  \prox(y) \ge \prox(x) + \<\nabla \prox(x), y - x\> + \half \norm{x - y}^2.
\end{equation*}
Given a convex and differentiable function $\prox$, the associated Bregman
divergence~\cite{Bregman67} is given by \mbox{ $\divergence(x, y) \defeq
  \prox(x) - \prox(y) - \<\nabla\prox(y), x - y\>$.}  When $X \in \R^{d_1
  \times d_2}$ is a matrix, we let $\singval_i(X)$ denote its $i$th largest
singular value, and when $X \in \R^{d \times d}$, we let $\lambda_i(X)$ denote
its $i$th largest eigenvalue by modulus. The transpose of $X$ is denoted
$X^\top$. The notation $\statsample \sim \statprob$ indicates that
$\statsample$ is drawn according to the distribution $\statprob.$

\section{Main results and some consequences}
\label{sec:main-results}

In this section, we begin by motivating the algorithm studied in this
paper, and then state our main results on its convergence behavior.

\subsection{Some background}

We focus on stochastic gradient descent methods\footnote{We note in passing
  that essentially identical results can also be obtained for methods based on
  mirror descent~\cite{NemirovskiYu83,Tseng08}, though we omit these so as not
  to overburden the reader.}  based on dual averaging
schemes~\cite{Nesterov09} for solving the stochastic
problem~\eqref{eqn:objective}.  Dual averaging methods are based on a proximal
function $\prox$, which is assumed strongly convex with respect to a norm
$\norm{\cdot}$. The update scheme of such a method is as follows.  Given a
point $x_t \in \xdomain$, the algorithm queries a stochastic oracle and
receives a random vector $g_t$ such that $\E[g_t] \in \partial f(x_t)$.  The
algorithm then performs the update
\begin{equation}
  \label{eqn:dual-averaging-update}
  x_{t+1} = \argmin_{x \in \xdomain} \bigg\{\sum_{\tau=0}^t \<g_\tau, x\> +
  \frac{1}{\stepsize_t} \prox(x)\bigg\}
\end{equation}
where $\stepsize_t > 0$ is a sequence of stepsizes.
Under some mild assumptions, the algorithm is guaranteed to converge for
stochastic problems. 
%
%
%
For instance, suppose that $\prox$ is strongly convex with respect to the norm
$\norm{\cdot}$, and moreover that $\E[\dnorm{g_t}^2] \le \lipobj^2$ for all
$t$, where we recall that $\dnorm{\cdot}$ denotes the dual norm to
$\norm{\cdot}$. Then, with stepsizes $\stepsize_t \propto \radius / \lipobj
\sqrt{t}$, it is known that the sequence $\{x_t\}_{t=0}^\infty$ generated by
the updates~\eqref{eqn:dual-averaging-update} satisfies
\begin{align}
\label{EqnOldRate}
    \E \left[f \left(\frac{1}{T} \sum_{t=1}^T x_t \right) \right] - f(x^*) =
    \order \left(\frac{\lipobj \sqrt{\prox(x^*)}}{\sqrt{T}} \right).
\end{align}
We refer the reader to papers by Nesterov~\cite{Nesterov09} and
Xiao~\cite{Xiao10} for results of this type.

An unsatisfying aspect of the bound~\eqref{EqnOldRate} is the absence
of any role for the variance of the (sub)gradient estimator $g_t$. In
particular, even if an algorithm is able to obtain $m > 1$ samples of
the gradient of $f$ at $x_t$---thereby giving a significantly more
accurate gradient estimate---this result fails to capture the likely
improvement of the method.  We address this problem by stochastically
smoothing the non-smooth objective $f$ and then adapt recent work on
so-called ``accelerated'' gradient methods~\cite{Lan10,Tseng08,Xiao10}
to achieve variance-based improvements.
Accelerated methods work only when the function $f$ is smooth---that is, when
it has Lipschitz continuous gradients.  Thus, we turn now to developing the
tools necessary to stochastically smooth the non-smooth
objective~\eqref{eqn:objective}.


\subsection{Description of algorithm}

Our algorithm is based on observations of stochastically perturbed gradient
information at each iteration, where we slowly decrease the perturbation as
the algorithm proceeds. More precisely, our algorithm uses the following
scheme. Let $\{\smoothparam_t\} \subset \R_+$ be a non-increasing sequence of
positive real numbers; these quantities control the perturbation size. At
iteration $t$, rather than query the stochastic oracle at the point $y_t$, the
algorithm queries the oracle at $m$ points drawn randomly from some
neighborhood around $y_t$.  Specifically, it performs the following three
steps:
\begin{enumerate}[(1)]
\item Draws random variables $\{Z_{i, t}\}_{i=1}^m$ in an
  i.i.d.\ manner according to the distribution $\smoothingdist$.
\item Queries the oracle at the $m$ points $y_t + \smoothparam_t
  Z_{i,t}, i = 1, 2, \ldots, m$, yielding stochastic gradients
  \begin{align}
    \label{eqn:average-smooth-gradient}
    g_{i,t} & \in \partial F(y_t + \smoothparam_t Z_{i,t}, \statsample_{i,t}),
    \quad \mbox{where $\statsample_{i,t} \sim \statprob$, for $i = 1, 2,
      \ldots, m$.} 
  \end{align}
\item Computes the average $g_t = \frac{1}{m} \sum_{i=1}^m g_{i,t}$.
\end{enumerate}
Here and throughout we denote the distribution of the random variable
$\smoothparam_t Z$ by $\smoothingdist_t$, and we note that this procedure
ensures \mbox{$\E[g_t \mid y_t] = \nabla\smoothedfunc[t](y_t) = \nabla
  \E[F(y_t + \smoothparam_t Z; \statsample) \mid y_t]$,} where
$\smoothedfunc[t]$ is the smoothed function~\eqref{eqn:convolve} and
$\smoothingdist_t$ is the density of $\smoothparam_t$.

By combining the sampling scheme~\eqref{eqn:average-smooth-gradient} with
extensions of Tseng's recent work on accelerated gradient
methods~\cite{Tseng08}, we can achieve stronger convergence rates for solving
the non-smooth objective~\eqref{eqn:objective}. The update we propose is
essentially a smoothed version of the simpler
method~\eqref{eqn:dual-averaging-update}. The method uses three series of
points, denoted $\{x_t, y_t, z_t\} \in \xdomain^3$. We use $y_t$ as a ``query
point'', so that at iteration $t$, the algorithm receives a vector $g_t$ as
described in the sampling scheme~\eqref{eqn:average-smooth-gradient}.  The
three sequences evolve according to a dual-averaging
algorithm, which in our case involves three scalars $(L_t, \theta_t,
\extrastep_t)$ to control step sizes.  The recursions are as follows:
\begin{subequations}
  \begin{align}
    y_t & = (1 - \theta_t) x_t + \theta_t z_t \label{eqn:y-update} \\
    z_{t+1} & = \argmin_{x \in \xdomain}
    \bigg\{\sum_{\tau=0}^t \frac{1}{\theta_\tau} \<g_\tau, x\> +
    \sum_{\tau=0}^t \frac{1}{\theta_\tau} \regularizer(x)
    + L_{t+1} \prox(x) +
    \frac{\extrastep_{t+1}}{\theta_{t+1}} \prox(x) \bigg\}
    \label{eqn:z-update} \\
    x_{t+1} & = (1 - \theta_t) x_t +  \theta_t z_{t+1}.
    \label{eqn:x-update}
  \end{align}
\end{subequations}
In prior work on accelerated schemes for stochastic and non-stochastic
optimization~\cite{Tseng08,Lan10,Xiao10}, the term $L_t$ is set equal
to the Lipschitz constant of $\nabla f$; in contrast, our choice of
varying $L_t$ allows our smoothing schemes to be oblivious to the
number of iterations $T$. The extra damping term $\extrastep_t /
\theta_t$ provides control over the fluctuations induced by using the
random vector $g_t$ as opposed to deterministic subgradient
information. As in Tseng's work~\cite{Tseng08}, we assume that $\theta_0
= 1$ and \mbox{$(1 - \theta_t) / \theta_t^2 = 1 / \theta_{t-1}^2$;}
the latter equality is ensured by setting $\theta_t = 2 / (1 + \sqrt{1
  + 4 / \theta_{t-1}^2})$.

\subsection{Convergence rates}

We now state our two main results on the convergence rate of the
randomized smoothing procedure~\eqref{eqn:average-smooth-gradient}
with accelerated dual averaging
updates~\eqref{eqn:y-update}--\eqref{eqn:x-update}.  So as to avoid
cluttering the theorem statements, we begin by stating our main
assumptions and notation.  When we state that a function $f$ is
Lipschitz continuous, we mean with respect to the norm $\norm{\cdot}$,
whose dual norm we denote $\dnorm{\cdot}$, and we assume that $\prox$
is nonnegative and strongly convex with respect to $\norm{\cdot}$.  Our main
assumption ensures that the smoothing operator and smoothed function
$\smoothedfunc$ are relatively well-behaved.
\begin{assumption}[Smoothing properties]
  \label{assumption:smoothness}
  The random variable $Z$ is zero-mean with density $\smoothingdist$ (with
  respect to Lebesgue measure on the affine hull $\aff(\xdomain)$ of
  $\xdomain$), and there are constants $\lipobj$ and $\lipgrad$ such that for
  all $u > 0$, $\E[f(x + uZ)] \le f(x) + \lipobj u$, and $\E[f(x + uZ)]$ has
  $\frac{\lipgrad}{u}$-Lipschitz continuous gradient with respect to the norm
  $\norm{\cdot}$.
  For $\statprob$-almost every $\statsample \in \statsamplespace$, we have
  $\dom F(\cdot; \statsample) \supseteq \smoothparam_0 \supp
  \smoothingdist + \xdomain$.
\end{assumption}
\noindent
Let $\smoothingdist_t$ denote the density of the random vector $\smoothparam_t
Z$ and define the instantaneous smoothed function $\smoothedfunc[t] = \int
f(x + z) d\smoothingdist_t(z)$.
As discussed in the introduction, the function
$\smoothedfunc[t]$ is guaranteed to be smooth whenever $\smoothingdist$ (and
hence $\smoothingdist_t$) is a density with respect to Lebesgue measure, so
Assumption~\ref{assumption:smoothness} ensures that $\smoothedfunc[t]$ is
uniformly close to $f$ and not too ``jagged.''  Many smoothing distributions,
including Gaussians and uniform distributions on norm balls, satisfy
Assumption~\ref{assumption:smoothness} (see
Appendix~\ref{appendix:smoothing}); we use such examples in the corollaries to
follow.  The containment of $\smoothparam_0 \supp \smoothingdist + \xdomain$
in $\dom F(\cdot; \statsample)$ guarantees that the subdifferential $\partial
F(\cdot; \statsample)$ is non-empty at all sampled points $y_t +
\smoothparam_t Z$. Indeed, since $\smoothingdist$ is a density with respect to
Lebesgue measure on $\aff(\xdomain)$, with probability one $y_t +
\smoothparam_t Z \in \relint \dom F(\cdot; \statsample)$ and
thus~\cite{HiriartUrrutyLe96} the subdifferential $\partial F(y_t +
\smoothparam_t Z; \statsample) \neq \emptyset$.  There are many smoothing
distributions $\smoothingdist$, including standard Gaussian and uniform
distributions on norm balls, for which Assumption~\ref{assumption:smoothness}
holds (see Appendix~\ref{appendix:smoothing}), and we use such examples in the
corollaries to follow. \\

In the algorithm~\eqref{eqn:y-update}--\eqref{eqn:x-update}, we set $L_t$ to
be an upper bound on the Lipschitz constant $\frac{\lipgrad}{\smoothparam_t}$
of the gradient of $\E[f(x + \smoothparam_t Z)]$; this choice ensures good
convergence properties of the algorithm. The following is the first of our
main theorems.
\begin{theorem}
  \label{theorem:main-theorem-no-horizon}
  Define $\smoothparam_t = \theta_t \smoothparam$, use the scalar
  sequence $L_t = \lipgrad / \smoothparam_t$, and assume
  that $\extrastep_t$ is non-decreasing. Under
  Assumption~\ref{assumption:smoothness}, for any $x^* \in \xdomain$
  and $T \ge 4$,
  \begin{equation}
    \label{EqnNoHorizon}
    \E[f(x_T) + \regularizer(x_T)] - [f(x^*) + \regularizer(x^*)] \le
    \frac{6 \lipgrad \prox(x^*)}{T \smoothparam} + \frac{2
      \extrastep_T \prox(x^*)}{T} + \frac{1}{T} \sum_{t=0}^{T-1}
    \frac{1}{\extrastep_t} \E\big[\dnorm{\error_t}^2\big] + \frac{4
      \lipobj \smoothparam}{T},
  \end{equation}
  where $\error_t \defn \nabla \smoothedfunc[t](y_t) - g_t$ is the
  error in the gradient estimate.
\end{theorem}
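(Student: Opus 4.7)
The plan is to adapt the Tseng-style accelerated dual averaging analysis, carefully tracking how the iteration-dependent smoothed function $\smoothedfunc[t]$ and the varying parameters $(L_t, \extrastep_t, \smoothparam_t)$ interact, and then convert back to the original non-smooth $f$ via Assumption~\ref{assumption:smoothness}.

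First, I would exploit smoothness of $\smoothedfunc[t]$. Since $L_t = \lipgrad/\smoothparam_t$ is a valid Lipschitz constant for $\nabla \smoothedfunc[t]$, the descent lemma gives
\begin{equation*}
\smoothedfunc[t](x_{t+1}) \le \smoothedfunc[t](y_t) + \langle \nabla \smoothedfunc[t](y_t),\, x_{t+1}-y_t\rangle + \tfrac{L_t}{2}\norm{x_{t+1}-y_t}^2 .
\end{equation*}
Using $x_{t+1}-y_t = \theta_t(z_{t+1}-z_t)$ from the recursion and the convexity of $\smoothedfunc[t]$ to split $\smoothedfunc[t](y_t)$ into contributions at $x_t$ and $x^*$ via the convex combination defining $y_t$, I obtain a bound of the form
\begin{equation*}
\smoothedfunc[t](x_{t+1})+\regularizer(x_{t+1}) \le (1-\theta_t)[\smoothedfunc[t](x_t)+\regularizer(x_t)] + \theta_t[\smoothedfunc[t](x^*)+\regularizer(x^*)] + \theta_t\langle \nabla\smoothedfunc[t](y_t),\, z_{t+1}-x^*\rangle + \tfrac{L_t\theta_t^2}{2}\norm{z_{t+1}-z_t}^2 .
\end{equation*}

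Second, I would replace the true gradient by the stochastic surrogate: write $\nabla\smoothedfunc[t](y_t) = g_t + \error_t$, and invoke the first-order optimality condition for the dual-averaging update~\eqref{eqn:z-update} together with strong convexity of $\prox$. This yields, after standard manipulations, an inequality of the form
\begin{equation*}
\theta_t\langle g_t,\, z_{t+1}-x^*\rangle + \theta_t[\regularizer(z_{t+1})-\regularizer(x^*)] + \tfrac{\extrastep_{t+1}}{2}\norm{z_{t+1}-z_t}^2 \le A_t\prox(x^*)-A_{t+1}\prox(x^*)\text{ terms},
\end{equation*}
where $A_t$ collects the $L_t$ and $\extrastep_t/\theta_t$ coefficients; the error term $\theta_t\langle\error_t, x^*-z_{t+1}\rangle$ is then split by introducing an auxiliary $\tilde z_{t+1}$ that omits $g_t$, so $\E\langle\error_t, x^*-\tilde z_{t+1}\rangle = 0$ conditionally and the residual $\langle\error_t,\tilde z_{t+1}-z_{t+1}\rangle$ is handled by Fenchel--Young, giving a $\tfrac{\theta_t^2}{2\extrastep_{t+1}}\dnorm{\error_t}^2$ penalty that must be absorbed by the $\extrastep_{t+1}\norm{z_{t+1}-z_t}^2/2$ slack from the strong convexity.

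Third, I would divide through by $\theta_t^2$ and telescope, using the identity $(1-\theta_t)/\theta_t^2 = 1/\theta_{t-1}^2$ together with $\theta_T \le 2/T$ (standard for $T\ge 4$). The crucial numerical point is that $L_t\theta_t^2 = \lipgrad\theta_t^2/\smoothparam_t = \lipgrad\theta_t/\smoothparam$, so the Lipschitz penalty collapses into something that telescopes to $\order(\lipgrad \prox(x^*)/(T\smoothparam))$; the $\extrastep_t/\theta_t$ damping, with $\extrastep_t$ non-decreasing, telescopes to $\order(\extrastep_T \prox(x^*)/T)$. Finally, to return from the moving $\smoothedfunc[t]$ to $f$, I invoke Assumption~\ref{assumption:smoothness}: $f \le \smoothedfunc[t] \le f + \lipobj\smoothparam_t$ (the lower bound from Jensen plus $\E Z=0$), which costs an extra $\lipobj\smoothparam_t = \lipobj\smoothparam\theta_t$ per iteration; after telescoping this contributes the $4\lipobj\smoothparam/T$ term.

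The main obstacle is the bookkeeping in the telescoping step. Because $\smoothedfunc[t]$ changes with $t$, the natural potential $(1-\theta_t)^{-1}\theta_t^{-2}[\smoothedfunc[t](x_t)+\regularizer(x_t)-f(x^*)-\regularizer(x^*)]$ is not monotone across iterations, so I must absorb the differences $\smoothedfunc[t](x_t)-\smoothedfunc[t-1](x_t)$ into the $\lipobj\smoothparam_t$ slack, and simultaneously verify that the choice $\smoothparam_t = \theta_t\smoothparam$ keeps the accumulated Lipschitz penalty from $L_t$ independent of $T$ while still allowing the $\lipobj\smoothparam/T$ closing term. Handling the $\extrastep_t$ damping so that the $\dnorm{\error_t}^2/\extrastep_t$ summation appears with the correct coefficient, without introducing spurious $T$-dependence, is the delicate calculation I expect to need the most care.
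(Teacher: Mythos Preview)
Your overall architecture---descent lemma for $\smoothedfunc[t]$, replace $\nabla\smoothedfunc[t]$ by $g_t+\error_t$, divide by $\theta_t^2$ and telescope using $(1-\theta_t)/\theta_t^2=1/\theta_{t-1}^2$, then convert back to $f$---matches the paper. Your ghost-iterate device for the error term is a legitimate variant; the paper instead writes $\langle\error_t,z_{t+1}-x^*\rangle=\langle\error_t,z_t-x^*\rangle+\langle\error_t,z_{t+1}-z_t\rangle$, takes the expectation of the first piece to zero (since $z_t\in\mc{F}_{t-1}$), and Fenchel--Young's the second against the $\extrastep_t$ divergence term. Either route works.

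There is, however, a real gap in the step you flag as the ``main obstacle.'' You propose to absorb the cross-iteration mismatch $\smoothedfunc[t](x_t)-\smoothedfunc[t-1](x_t)$ into the $\lipobj\smoothparam_t$ slack from Assumption~\ref{assumption:smoothness}. That does not give the claimed rate. After dividing by $\theta_t^2$, the mismatch sits with coefficient $1/\theta_{t-1}^2$, so bounding it by $\lipobj\smoothparam_t=\lipobj\smoothparam\theta_t$ contributes
\[
\sum_{t=1}^{T-1}\frac{\lipobj\smoothparam\,\theta_t}{\theta_{t-1}^2}
=\sum_{t=1}^{T-1}\lipobj\smoothparam\,\frac{1-\theta_t}{\theta_t}
\;\asymp\;\lipobj\smoothparam\,T^2,
\]
which after multiplying by $\theta_{T-1}^2\asymp T^{-2}$ leaves an $O(\lipobj\smoothparam)$ constant, not the $4\lipobj\smoothparam/T$ you need.

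The paper avoids this entirely by proving that the smoothed functions are \emph{monotone}: $\smoothedfunc[t](x)\le\smoothedfunc[t-1](x)$ for every $x$, so the mismatch is nonpositive and can simply be dropped. The argument is a short Jensen step: since $\smoothparam_t\le\smoothparam_{t-1}$, write $\smoothparam_t Z$ as the conditional mean of a mixture of $\smoothparam_{t-1}Z$ and $0$ (introduce a Bernoulli $U$ with $\P(U=1)=\smoothparam_t/\smoothparam_{t-1}$), then apply convexity of $f$ and $f(x)\le\smoothedfunc[t-1](x)$. With this monotonicity in hand, the telescoping goes through cleanly and the only place the $\lipobj\smoothparam_t$ slack is spent is converting $\smoothedfunc[t](x^*)$ back to $f(x^*)$, where the weights are $1/\theta_t$ rather than $1/\theta_{t-1}^2$, giving exactly $\sum_t\lipobj\smoothparam_t/\theta_t=T\lipobj\smoothparam$ and hence $4\lipobj\smoothparam/T$ after rescaling.
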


\paragraph{Remarks:}  
Note that the convergence rate~\eqref{EqnNoHorizon} involves the
variance $\E[\dnorm{\error_t}^2]$ explicitly.  We exploit this fact in
the corollaries to be stated shortly.  In addition, note that
Theorem~\ref{theorem:main-theorem-no-horizon} does not require a
priori knowledge of the number of iterations $T$ to be performed,
which renders it suitable to online and streaming applications.  If
such knowledge is available, then it is possible to give a similar
result using the smoothing parameter $\smoothparam_t \equiv u$ for all
$t$; such a result is stated as Theorem~\ref{theorem:main-theorem} in
Section~\ref{sec:main-result-proofs}. \\


The preceding result, which provides convergence in expectation, can
be extended to bounds that hold with high probability under suitable
tail conditions on the error $\error_t \defn \nabla \smoothedfunc[t](y_t) -
g_t$.  In particular, let $\mc{F}_t$ denote the $\sigma$-field of the
random variables $g_{i,s}$, $i = 1, \ldots, m$ and $s = 0, \ldots, t$.
In order to achieve high-probability convergence results, a subset of
our results involve the following assumption.
\begin{assumption}[Sub-Gausian errors]
  \label{assumption:subexp-errors}
  The error is \emph{$(\dnorm{\cdot}, \stddev)$ sub-Gaussian}
  for some $\stddev > 0$,
  meaning that with probability 1
  \begin{align}
    \label{assumption:mgf-squared}
    \E[\exp(\dnorm{\error_t}^2/\stddev^2) \mid \mc{F}_{t-1}] & \le \exp(1)
    \quad \mbox{for all $t \in \{1, 2, 3, \ldots \}$.}
  \end{align}
\end{assumption}
\noindent
We refer the reader to Appendix~\ref{AppTail} for more background on
sub-Gaussian and sub-exponential random variables.  In past work on
smooth optimization, other authors~\cite{JuditskyNeTa08,Lan10,Xiao10}
have imposed this type of tail assumption, and we discuss sufficient
conditions for the assumption to hold in
Corollary~\ref{corollary:highprob} in the following section.

\begin{theorem}
  \label{theorem:main-theorem-highprob-no-horizon}
 In addition to the conditions of
 Theorem~\ref{theorem:main-theorem-no-horizon}, assume $\xdomain$ is
 compact with \mbox{$\norm{x - x^*} \le \radius$} for all $x \in \xdomain$
 and that Assumption~\ref{assumption:subexp-errors} holds.  Then with
 probability at least $1 - 2 \delta$, the algorithm with step size
 $\extrastep_t = \extrastep \sqrt{t + 1}$ satisfies
  \begin{align*}
    f(x_T) + \regularizer(x_T) -
    [f(x^*) + \regularizer(x^*)] & \le \frac{6 \lipgrad
      \prox(x^*)}{T \smoothparam} + \frac{4 \lipobj
      \smoothparam}{T} + \frac{4 \extrastep_T \prox(x^*)}{T + 1}
    + \theta_{T-1} \sum_{t=0}^{T-1} \frac{1}{2\extrastep_t}
    \E[\dnorm{\error_t}^2] \\ & \qquad ~ + \frac{4 \stddev^2
      \max\left\{\log\frac{1}{\delta}, \sqrt{2 e (\log T + 1)
        \log\frac{1}{\delta}}\right\}}{ \extrastep T} +
    \frac{\stddev \radius \sqrt{\log\frac{1}{\delta}}}{\sqrt{T}}.
  \end{align*}
\end{theorem}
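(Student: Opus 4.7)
The plan is to revisit the proof of Theorem~\ref{theorem:main-theorem-no-horizon} and extract, prior to taking expectations, a deterministic inequality of the form
\begin{equation*}
f(x_T) + \regularizer(x_T) - [f(x^*) + \regularizer(x^*)] \; \le \; \mathrm{(non\text{-}random\ terms)} + A_T + B_T,
\end{equation*}
where the non-random terms are precisely $6 \lipgrad \prox(x^*)/(T \smoothparam) + 4 \lipobj \smoothparam /T + 4\extrastep_T \prox(x^*)/(T+1)$, the quantity $A_T = \theta_{T-1} \sum_{t=0}^{T-1} \dnorm{\error_t}^2/(2\extrastep_t)$ is the weighted sum of squared errors arising from the smoothness-plus-noise progress bound used for accelerated dual averaging, and $B_T$ is a martingale-difference sum of the form $\sum_{t=0}^{T-1} \beta_t \langle \error_t,\, x^* - z_t\rangle$, arising from the linearization of the dual-averaging progress lemma around the iterate $z_t$. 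Taking conditional expectations would turn $A_T$ into its expected counterpart and kill $B_T$, thereby recovering Theorem~\ref{theorem:main-theorem-no-horizon}; the remaining work is to control the deviations of these two random quantities with high probability.

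For the quadratic piece $A_T$, I would split it as
\begin{equation*}
A_T = \theta_{T-1} \sum_{t=0}^{T-1} \frac{1}{2\extrastep_t}\, \E[\dnorm{\error_t}^2 \mid \mc{F}_{t-1}] + \theta_{T-1} \sum_{t=0}^{T-1} \frac{1}{2\extrastep_t}\bigl(\dnorm{\error_t}^2 - \E[\dnorm{\error_t}^2 \mid \mc{F}_{t-1}]\bigr).
\end{equation*}
Under Assumption~\ref{assumption:subexp-errors}, the random variable $\dnorm{\error_t}^2$ is conditionally sub-exponential with parameter of order $\stddev^2$ given $\mc{F}_{t-1}$, so the bracketed centered terms form a martingale difference sequence of conditionally sub-exponential random variables with parameters proportional to $\stddev^2/\extrastep_t$. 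A Bernstein-type (Freedman) tail bound for such martingales yields a deviation estimate of the form $c(\sqrt{V_T \log(1/\delta)} + M_T \log(1/\delta))$ with $V_T = \sum_t \stddev^4/\extrastep_t^2$ and $M_T = \max_t \stddev^2/\extrastep_t$. With $\extrastep_t = \extrastep\sqrt{t+1}$, one has $\sum_t 1/\extrastep_t^2 = O(\log T /\extrastep^2)$ and $\max_t 1/\extrastep_t = 1/\extrastep$; combining with the $\theta_{T-1} = O(1/T)$ prefactor produces exactly the term $4\stddev^2 \max\{\log(1/\delta), \sqrt{2e(\log T + 1)\log(1/\delta)}\}/(\extrastep T)$ in the theorem, and the $\max$ structure reflects the sub-Gaussian-versus-sub-exponential crossover of the Bernstein bound.

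For the martingale-difference sum $B_T$, I would use that $z_t$ is $\mc{F}_{t-1}$-measurable and $\norm{x^* - z_t} \le \radius$, so each increment $\langle \error_t, x^* - z_t\rangle$ is conditionally sub-Gaussian with parameter $\stddev \radius$ in the sense of Assumption~\ref{assumption:subexp-errors}. A standard Azuma--Hoeffding bound for martingale differences with conditionally sub-Gaussian increments then controls $B_T$ with probability $1-\delta$ by $\stddev \radius \sqrt{2 \log(1/\delta) \sum_{t} \beta_t^2}$; tracking the weights $\beta_t$ and the accelerated-method normalization (driven by $\theta_t \sim 2/(t+2)$ and the relation $(1-\theta_t)/\theta_t^2 = 1/\theta_{t-1}^2$) collapses this to the $\stddev \radius \sqrt{\log(1/\delta)/T}$ term in the theorem. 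A final union bound over the two failure events yields the $1-2\delta$ confidence.

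The main obstacle will be (i) the bookkeeping in extracting the right non-expected inequality so that $A_T$ and $B_T$ appear with the precise coefficients $\beta_t$ and $\theta_{T-1}/\extrastep_t$ dictated by the accelerated-gradient recursion of~\eqref{eqn:y-update}--\eqref{eqn:x-update}, and (ii) the sub-exponential Bernstein step for $A_T$, which must be done in its martingale form (since successive errors are not independent) and must produce the somewhat unusual $\max\{\log(1/\delta), \sqrt{(\log T + 1)\log(1/\delta)}\}$ shape rather than a clean sub-Gaussian tail. The Azuma step for $B_T$ and the final union bound are then straightforward once $B_T$ is isolated in the correct form.
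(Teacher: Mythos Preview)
Your proposal is correct and follows essentially the same route as the paper: it starts from the deterministic bound~\eqref{eqn:main-result-no-expectations} obtained before taking expectations in the proof of Theorem~\ref{theorem:main-theorem-no-horizon}, then controls the martingale sum $\sum_t \theta_t^{-1}\langle \error_t, z_t - x^*\rangle$ by an Azuma--Hoeffding argument (the paper's Lemma~\ref{lemma:mg-error-terms}) and the centered squared-error sum by a sub-exponential Bernstein/Freedman-type bound (the paper's Lemma~\ref{lemma:opt-error2-terms}), combining them with a union bound. The constants and the $\max\{\log(1/\delta),\sqrt{(\log T+1)\log(1/\delta)}\}$ shape you anticipate are exactly what the paper obtains.
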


\paragraph{Remarks:} 
The first four terms in the convergence rate
Theorem~\ref{theorem:main-theorem-highprob-no-horizon} gives are essentially
identical to the expected rate of
Theorem~\ref{theorem:main-theorem-no-horizon}. The first of the additional
terms decreases at a rate of $1 / T$, while the second decreases at a rate of
$\stddev / \sqrt{T}$. As we discuss in the Corollaries that follow, the
dependence $\stddev / \sqrt{T}$ on the variance $\stddev^2$ is optimal, and an
appropriate choice of the sequence $\extrastep_t$ in
Theorem~\ref{theorem:main-theorem-no-horizon} yields the same rates to
constant factors.

\subsection{Some consequences}
\label{sec:corollaries}

The corollaries of the above theorems---and the consequential optimality
guarantees of the algorithm above---are our main focus for the remainder of
this section. Specifically, we show concrete convergence bounds for algorithms
using different choices of the smoothing distribution $\smoothingdist$. For
each corollary, we make the assumption that $x^* \in \xdomain$ satisfies
$\prox(x^*) \le \radius^2$, but is otherwise arbitrary, that the
iteration number $T \ge 4$, and that $\smoothparam_t = \smoothparam \theta_t$.

We begin with a corollary that provides bounds when the smoothing
distribution $\smoothingdist$ is uniform on the $\ell_2$-ball.  The
conditions on $F$ in the corollary hold, for example, when $F(\cdot;
\statsample)$ is $\lipobj$-Lipschitz with respect to the $\ell_2$-norm
for $\statprob$-a.e.\ sample of $\statsample$.
\begin{corollary}
  \label{corollary:variance-rate-ltwo}
  Let $\smoothingdist$ be uniform on $B_2(0, 1)$ and assume $\E[\ltwo{\partial
      F(x; \statsample)}^2] \le \lipobj^2$ for $x \in \xdomain + B_2(0,
  \smoothparam)$, where we set $u = \radius d^{1/4}$. With the
  step size choices $\extrastep_t =
  \lipobj \sqrt{t + 1} / \radius \sqrt{m}$ and
  $L_t = \lipobj \sqrt{d} / \smoothparam_t$,
  \begin{equation*}
    \E[f(x_T) + \regularizer(x_T)] - [f(x^*) + \regularizer(x^*)]
    \le \frac{10 \lipobj \radius d^{1/4}}{T}
    + \frac{5 \lipobj \radius}{\sqrt{Tm}}.
  \end{equation*}
\end{corollary}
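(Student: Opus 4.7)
}

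The plan is to apply Theorem~\ref{theorem:main-theorem-no-horizon} directly, after verifying Assumption~\ref{assumption:smoothness} for the uniform smoothing density on $B_2(0,1)$, identifying the correct smoothing constant $\lipgrad$, and bounding the mean-squared gradient error $\E[\ltwo{\error_t}^2]$. First I would invoke the standard smoothing facts for the uniform distribution on $B_2(0,1)$ (which are collected in Appendix~\ref{appendix:smoothing}): under the hypothesis $\E[\ltwo{\partial F(x;\statsample)}^2]\le \lipobj^2$ for $x\in\xdomain + B_2(0,\smoothparam)$, Jensen's inequality shows $f$ is $\lipobj$-Lipschitz on $\xdomain + B_2(0,\smoothparam)$, so $\E[f(x+uZ)]\le f(x)+\lipobj u$, and the smoothed function $\E[f(x+uZ)]$ has $(\lipobj\sqrt{d}/u)$-Lipschitz gradient in $\ltwo{\cdot}$. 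Thus Assumption~\ref{assumption:smoothness} holds with constants $\lipobj$ and $\lipgrad=\lipobj\sqrt{d}$, which matches the prescribed choice $L_t=\lipobj\sqrt{d}/\smoothparam_t$.

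Next I would bound the error $\error_t=\nabla \smoothedfunc[t](y_t)-g_t$. Since the $m$ samples $g_{i,t}\in \partial F(y_t+\smoothparam_t Z_{i,t};\statsample_{i,t})$ are conditionally i.i.d.\ given $y_t$ with conditional mean $\nabla\smoothedfunc[t](y_t)$, the standard variance-of-the-average identity gives
\begin{equation*}
\E\big[\ltwo{\error_t}^2 \bigm| y_t\big] \;\le\; \frac{1}{m}\,\E\big[\ltwo{g_{1,t}}^2 \bigm| y_t\big] \;\le\; \frac{\lipobj^2}{m},
\end{equation*}
using the second-moment hypothesis on the subgradients (which holds on the enlarged domain $\xdomain + B_2(0,\smoothparam)$ that contains all query points since $\smoothparam_t\le\smoothparam$).

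Then I would substitute $\lipgrad=\lipobj\sqrt{d}$, $\prox(x^*)\le\radius^2$, $\smoothparam=\radius d^{1/4}$, and $\extrastep_t=\lipobj\sqrt{t+1}/(\radius\sqrt{m})$ into the bound~\eqref{EqnNoHorizon}. The first and fourth terms of~\eqref{EqnNoHorizon} combine to give $(6\lipgrad\radius^2)/(T\smoothparam)+4\lipobj\smoothparam/T = 6\lipobj\radius d^{1/4}/T+4\lipobj\radius d^{1/4}/T = 10\lipobj\radius d^{1/4}/T$, which is exactly the first term of the claimed bound. For the second term, $2\extrastep_T\prox(x^*)/T\le 2\lipobj\radius\sqrt{T+1}/(T\sqrt{m})\le 2\sqrt{2}\,\lipobj\radius/\sqrt{Tm}$. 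For the third term, I would use $\sum_{t=0}^{T-1}1/\sqrt{t+1}\le 2\sqrt{T}$ together with the variance bound above:
\begin{equation*}
\frac{1}{T}\sum_{t=0}^{T-1}\frac{1}{\extrastep_t}\E\big[\ltwo{\error_t}^2\big] \;\le\; \frac{1}{T}\sum_{t=0}^{T-1}\frac{\radius\sqrt{m}}{\lipobj\sqrt{t+1}}\cdot\frac{\lipobj^2}{m} \;\le\; \frac{2\lipobj\radius}{\sqrt{Tm}}.
\end{equation*}

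Adding the two $\sqrt{Tm}^{-1}$ contributions yields $(2+2\sqrt{2})\lipobj\radius/\sqrt{Tm}\le 5\lipobj\radius/\sqrt{Tm}$, completing the bound. The only delicate step is the identification of the correct gradient-Lipschitz constant $\lipobj\sqrt{d}/\smoothparam$ for uniform smoothing on the $\ell_2$-ball; everything else is mechanical substitution and elementary summation. The factor $d^{1/4}$ in the optimal $\smoothparam=\radius d^{1/4}$ arises from balancing the $\smoothparam$ and $1/\smoothparam$ terms $6\lipobj\sqrt{d}\radius^2/(T\smoothparam)$ and $4\lipobj\smoothparam/T$, which is the source of the dimension dependence in the $1/T$ term.
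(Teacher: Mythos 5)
Your proof is correct and follows essentially the same route as the paper: identify the smoothing constants $\lipobj$ and $\lipgrad=\lipobj\sqrt{d}$ for the uniform distribution on $B_2(0,1)$ (the paper cites Lemma~\ref{lemma:ltwo} for these facts and for the variance bound $\E[\ltwo{\error_t}^2]\le\lipobj^2/m$), substitute into the bound~\eqref{EqnNoHorizon} of Theorem~\ref{theorem:main-theorem-no-horizon}, and control the sums with $\sum_t 1/\sqrt{t+1}\le 2\sqrt{T}$. The only cosmetic difference is that you bound $2\sqrt{T+1}/T\le 2\sqrt{2}/\sqrt{T}$ while the paper writes the combined inequality $2\sqrt{T+1}/T+2/\sqrt{T}\le 5/\sqrt{T}$; both yield the same final constant.
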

\noindent 
The following corollary shows that similar convergence rates are attained when
smoothing with the normal distribution.
\begin{corollary}
  \label{corollary:variance-rate-normal}
  Let $\smoothingdist$ be the $d$-dimensional normal distribution with
  zero-mean and identity covariance $I$ and assume that $F(\cdot;
  \statsample)$ is $\lipobj$-Lipschitz with respect to the $\ell_2$-norm for
  $\statprob$-a.e.\ $\statsample$.  With smoothing parameter
  $\smoothparam = \radius d^{-1/4}$ and step sizes
  $\extrastep_t = \lipobj \sqrt{t + 1} / \radius \sqrt{m}$ and
  $L_t = \lipobj / \smoothparam_t$, we have
  \begin{equation*}
    \E[f(x_T) + \regularizer(x_T)] - [f(x^*) + \regularizer(x^*)]
    \le \frac{10 \lipobj\radius d^{1/4}}{T}
    + \frac{5 \lipobj\radius}{\sqrt{Tm}}.
  \end{equation*}
\end{corollary}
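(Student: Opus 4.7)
The plan is to derive Corollary~\ref{corollary:variance-rate-normal} by verifying that Gaussian smoothing satisfies Assumption~\ref{assumption:smoothness} with appropriate constants and then invoking Theorem~\ref{theorem:main-theorem-no-horizon}. First, I would record the two key analytic properties of Gaussian smoothing (to be taken from Appendix~\ref{appendix:smoothing}): for $Z \sim \mathcal{N}(0,I)$ and $f$ that is $\lipobj$-Lipschitz with respect to $\ltwo{\cdot}$, one has the approximation bound $\E[f(x + uZ)] \le f(x) + \lipobj u\, \E\ltwo{Z} \le \lipobj u \sqrt{d}$, and the smoothed function $\smoothedfunc$ has gradient that is Lipschitz continuous with constant $\lipobj/u$ with respect to $\ltwo{\cdot}$. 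Thus Assumption~\ref{assumption:smoothness} holds with approximation constant $\lipobj\sqrt{d}$ (playing the role of the ``$\lipobj$'' there) and gradient-Lipschitz constant $\lipgrad = \lipobj$, which justifies the choice $L_t = \lipobj/\smoothparam_t$ in the algorithm.

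Next I would bound the gradient-estimate error. Since each $g_{i,t} \in \partial F(y_t + \smoothparam_t Z_{i,t}; \statsample_{i,t})$ with $F(\cdot;\statsample)$ being $\lipobj$-Lipschitz, we have $\ltwo{g_{i,t}} \le \lipobj$ almost surely, and the samples are conditionally i.i.d.\ with conditional mean $\nabla \smoothedfunc[t](y_t)$. Averaging $m$ of them gives $\E[\ltwo{\error_t}^2 \mid \mathcal{F}_{t-1}] \le \lipobj^2/m$.

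Now I would plug these constants into the bound~\eqref{EqnNoHorizon}. The first term becomes $6\lipobj \radius^2 / (T\smoothparam)$, and the ``$4\lipobj \smoothparam/T$'' term becomes $4\lipobj\sqrt{d}\,\smoothparam/T$. Choosing $\smoothparam = \radius d^{-1/4}$ balances these two contributions, yielding $(6+4)\lipobj \radius d^{1/4}/T = 10\lipobj\radius d^{1/4}/T$. For the remaining two terms, I would substitute $\extrastep_t = \lipobj\sqrt{t+1}/(\radius\sqrt{m})$. The variance sum becomes
\begin{equation*}
\frac{1}{T}\sum_{t=0}^{T-1} \frac{\radius\sqrt{m}}{\lipobj\sqrt{t+1}} \cdot \frac{\lipobj^2}{m}
\;\le\; \frac{\lipobj \radius}{T\sqrt{m}} \cdot 2\sqrt{T}
\;=\; \frac{2\lipobj\radius}{\sqrt{Tm}},
\end{equation*}
using $\sum_{t=0}^{T-1}(t+1)^{-1/2} \le 2\sqrt{T}$. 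The term $2\extrastep_T \prox(x^*)/T$ contributes at most $2\lipobj\radius\sqrt{T+1}/(T\sqrt{m}) \le 3\lipobj\radius/\sqrt{Tm}$ for $T \ge 4$. Adding these two pieces gives $5\lipobj\radius/\sqrt{Tm}$, which combines with the first part to produce the stated bound.

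The routine obstacle is just the constant arithmetic; the only substantive ingredient is the gradient-Lipschitz estimate $\lipgrad = \lipobj$ for Gaussian convolution (independent of $d$), which is what distinguishes this corollary from the uniform-ball smoothing case and is what makes $\smoothparam = \radius d^{-1/4}$ the optimal choice. Everything else is a mechanical substitution into Theorem~\ref{theorem:main-theorem-no-horizon}.
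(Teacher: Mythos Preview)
Your proposal is correct and follows essentially the same route as the paper: the paper's proof of Corollary~\ref{corollary:variance-rate-normal} is explicitly stated to be identical to that of Corollary~\ref{corollary:variance-rate-ltwo}, except that one invokes Lemma~\ref{lemma:normal} (Gaussian smoothing: approximation error $\lipobj u\sqrt{d}$, gradient Lipschitz constant $\lipobj/u$) in place of Lemma~\ref{lemma:ltwo} and sets $\smoothparam = \radius d^{-1/4}$ instead of $\radius d^{1/4}$. Your identification of the constants in Assumption~\ref{assumption:smoothness}, the variance bound $\E[\ltwo{\error_t}^2]\le \lipobj^2/m$, the substitution into the bound~\eqref{EqnNoHorizon}, and the arithmetic combining the $\extrastep_T$ and variance terms via $\sum_{t=1}^T t^{-1/2}\le 2\sqrt{T}$ all match the paper's argument.
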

\noindent
We remark here (deferring deeper discussion to
Lemma~\ref{lemma:smoothing-sharp}) that the dimension dependence of $d^{1/4}$
on the $1/T$ term in the previous corollaries cannot be
improved by more than a constant factor. Essentially, functions $f$ exist
whose smoothed version $\smoothedfunc$ cannot have both Lipschitz
continuous gradient and be uniformly close to $f$ in a dimension-independent
sense, at least for the uniform or normal distributions.

The advantage of using normal random variables---as opposed to $Z$
uniform on $B_2(0, u)$---is that no normalization of $Z$ is necessary,
though there are more stringent requirements on $f$. The lack of
normalization is a useful property in very high dimensional scenarios,
such as statistical natural language
processing~\cite{ManningRaSc08}. Similarly, we can sample $Z$ from an
$\ell_\infty$ ball, which, like $B_2(0, u)$, is still compact, but
gives slightly looser bounds than sampling from $B_2(0,
u)$. Nonetheless, it is much easier to sample from $B_\infty(0, u)$ in
high dimensional settings, especially sparse data scenarios such as
NLP where only a few coordinates of the random variable $Z$ are
needed.

There are several objectives $f + \regularizer$ with domains $\xdomain$ for
which the natural geometry is non-Euclidean, which motivates the mirror
descent family of algorithms~\cite{NemirovskiYu83}. By using different
distributions $\smoothingdist$ for the random perturbations $Z_{i,t}$ in
\eqref{eqn:average-smooth-gradient}, we can take advantage of non-Euclidean
geometry. Here we give an example that is quite useful for problems in which
the optimizer $x^*$ is sparse; for example, the optimization set $\xdomain$
may be a simplex or $\ell_1$-ball, or $\regularizer(x) = \lambda \lone{x}$.
The idea in this corollary is that we achieve a pair of dual norms that may
give better optimization performance than the $\ell_2$-$\ell_2$ pair above.
\begin{corollary}
  \label{corollary:variance-rate-linf}
  Let $\smoothingdist$ be the uniform density on $B_\infty(0, 1)$
  and assume that $F(\cdot; \statsample)$ is $\lipobj$-Lipschitz continuous
  with respect to the $\ell_1$-norm over $\xdomain + B_\infty(0,
  \smoothparam)$ for $\statsample \in \statsamplespace$, where
  we set $\smoothparam = \radius \sqrt{d \log d}$. Use the proximal
  function $\prox(x) = \frac{1}{2(p - 1)} \norm{x}_p^2$ for $p = 1 + 1/\log
  d$ and set $\extrastep_t =
  \lipobj \sqrt{t + 1} / \radius \sqrt{m \log d}$ and
  $L_t = \lipobj / \smoothparam_t$. There is a universal
  constant $C$ such that
  \begin{align*}
    \E [f(x_T) + \regularizer(x_T)] - [f(x^*) + \regularizer(x^*)]
    & \le C \frac{\lipobj \radius \sqrt{d}}{T}
    + C \frac{\lipobj \radius \sqrt{\log d}}{\sqrt{Tm}} \\
    & = \order\left(\frac{\lipobj \lone{x^*} \sqrt{d \log d}}{T}
    + \frac{\lipobj \lone{x^*} \log d}{\sqrt{T m}}\right).
  \end{align*}
\end{corollary}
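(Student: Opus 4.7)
My plan is to specialize Theorem~\ref{theorem:main-theorem-no-horizon} to the geometry $(\norm{\cdot},\dnorm{\cdot}) = (\lone{\cdot}, \linf{\cdot})$, using the smoothing density $\smoothingdist = \mathrm{Unif}(B_\infty(0,1))$ and proximal function $\prox(x) = \frac{1}{2(p-1)}\norm{x}_p^2$ at $p = 1 + 1/\log d$, and then to assemble four ingredients before substituting them into~\eqref{EqnNoHorizon}.

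First, I would verify Assumption~\ref{assumption:smoothness} using the corresponding lemma of Appendix~\ref{appendix:smoothing} specialized to $\ell_\infty$-ball smoothing of an $\ell_1$-Lipschitz function. Because $F(\cdot;\statsample)$ is $\lipobj$-Lipschitz with respect to $\lone{\cdot}$ on the enlarged domain $\xdomain + B_\infty(0,\smoothparam)$, the smoothed function $\E[f(x+\smoothparam_t Z)]$ is uniformly within an $O(\lipobj \smoothparam_t)$ margin of $f$ and has gradient that is Lipschitz with respect to $\lone{\cdot}$ with constant of order $\lipobj/\smoothparam_t$; this justifies the stated $L_t = \lipobj/\smoothparam_t$.

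Second, I would record two standard facts about the $p$-norm proximal function: (i) for $p = 1 + 1/\log d$, $\prox$ is $1$-strongly convex with respect to $\lone{\cdot}$ (the classical Nemirovski-Yudin fact), and (ii) for any $x^* \in \xdomain$ with $\lone{x^*} \le \radius$, monotonicity of $p$-norms gives $\norm{x^*}_p \le d^{1/p - 1}\lone{x^*} \le e\,\lone{x^*}$, so $\prox(x^*) \le \tfrac{e^2}{2}(\log d)\lone{x^*}^2$. The $\log d$ emerging here is the source of the $\log d$ factor on the $1/T$ term in the $O$-notation form of the bound.

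Third, and this is the one genuinely technical step, I would bound the conditional variance $\E[\linf{\error_t}^2 \mid \mc{F}_{t-1}]$. Since $F(\cdot;\statsample)$ is $\lipobj$-Lipschitz with respect to $\lone{\cdot}$, each sampled subgradient obeys $\linf{g_{i,t}} \le \lipobj$ almost surely, and $\error_t$ is the average of $m$ centered i.i.d.\ copies (conditional on $y_t$) of such vectors. A standard Hoeffding/symmetrization bound on the maximum of $d$ independent centered bounded sums then yields
\begin{equation*}
\E[\linf{\error_t}^2 \mid y_t] \;\le\; \frac{C\,\lipobj^2 \log d}{m}
\end{equation*}
for a universal constant $C$. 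The $\log d$ scaling (rather than the naive $d$ that would follow from $\linf{v} \le \ltwo{v}$) is precisely what produces the $\sqrt{\log d}$ improvement over Corollaries~\ref{corollary:variance-rate-ltwo}--\ref{corollary:variance-rate-normal}, and is the main obstacle in the proof.

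Finally, I would substitute these three ingredients into~\eqref{EqnNoHorizon} with the prescribed step sizes $\extrastep_t = \lipobj\sqrt{t+1}/(\radius\sqrt{m\log d})$ and $L_t = \lipobj/\smoothparam_t$, use $\sum_{t=0}^{T-1} 1/\sqrt{t+1} \le 2\sqrt{T}$ to collapse the variance sum, and tune $\smoothparam$ to balance the $\lipgrad\prox(x^*)/(T\smoothparam)$ and $\lipobj\smoothparam/T$ terms. Routine simplification then yields the claim; using $\radius = \lone{x^*}$ up to constants recovers the big-$O$ form in the corollary statement.
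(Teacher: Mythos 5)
Your high-level plan is the same as the paper's — specialize Theorem~\ref{theorem:main-theorem-no-horizon} with $\ell_\infty$-ball smoothing and the $\ell_p$-norm proximal function, bound the gradient error in $\linf{\cdot}$ by $\order(\lipobj^2\log d/m)$ via a sub-Gaussian maximal inequality (the paper uses Lemma~\ref{lemma:sub-gaussian}; your Hoeffding/symmetrization phrasing is the same mechanism), and substitute into~\eqref{EqnNoHorizon} — but there is a substantive error in your first ingredient. You claim that the uniform approximation error from $\ell_\infty$-ball smoothing of an $\ell_1$-Lipschitz function is $\order(\lipobj\smoothparam_t)$. It is not: Lemma~\ref{lemma:linf}(i) gives
\begin{equation*}
  f(x) \;\le\; \smoothedfunc(x) \;\le\; f(x) + \frac{\lipobj d}{2}\,\smoothparam,
\end{equation*}
and the factor of $d$ is sharp (take $f(x) = \lone{x}$; then $\E[\lone{\smoothparam Z}] = d\smoothparam/2$ for $Z$ uniform on $B_\infty(0,1)$). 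In the notation of Assumption~\ref{assumption:smoothness}, the uniform-closeness constant is thus of order $\lipobj d$ while $\lipgrad$ is of order $\lipobj$. The two $1/T$-terms in~\eqref{EqnNoHorizon} to be balanced are therefore $\lipobj\radius^2/(T\smoothparam)$ and $\lipobj d\,\smoothparam/T$, balanced at $\smoothparam$ of order $\radius/\sqrt{d}$ with common value $\order(\lipobj\radius\sqrt{d}/T)$ — this is exactly where the corollary's $\sqrt{d}$ comes from. Your sketch would instead balance $\lipobj\radius^2/(T\smoothparam)$ against $\lipobj\smoothparam/T$ and arrive at a dimension-free $1/T$ term, which is too strong and is ruled out by Lemma~\ref{lemma:smoothing-sharp} and the remark following Corollary~\ref{corollary:variance-rate-normal}.

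A smaller imprecision: $\prox(x) = \frac{1}{2(p-1)}\norm{x}_p^2$ at $p = 1 + 1/\log d$ is $1$-strongly convex with respect to $\norm{\cdot}_p$, not with respect to $\lone{\cdot}$. The paper accordingly runs Theorem~\ref{theorem:main-theorem-no-horizon} in the $(\norm{\cdot}_p, \norm{\cdot}_q)$ dual pair and only afterward relates $\norm{\cdot}_q$ to $\linf{\cdot}$ (and $\lone{\cdot}$ to $\norm{\cdot}_p$) via the factor $d^{1/q} \le e$; you would need to carry out that conversion explicitly rather than asserting $\ell_1$-strong convexity outright.
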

\noindent
The dimension dependence of $\sqrt{d \log d}$ on the leading
$1/T$ term in the corollary is weaker than the $d^{1/4}$ dependence in
the earlier corollaries, so for very large $m$ the corollary is not as strong
as one desires when taking advantage of non-Euclidean geometry.
Nonetheless, for large $T$, the $1 / \sqrt{Tm}$ terms dominate
the convergence rates, and Corollary~\ref{corollary:variance-rate-linf} can
be an improvement.

Our final corollary specializes the high probability convergence result in
Theorem~\ref{theorem:main-theorem-highprob-no-horizon} by showing that the
error is sub-Gaussian~\eqref{assumption:mgf-squared} under the assumptions
in the corollary. We state the corollary for problems with Euclidean geometry,
but it is clear that similar results hold for non-Euclidean geometry as above.
\begin{corollary}
  \label{corollary:highprob}
  Assume that $F(\cdot; \statsample)$ is $\lipobj$-Lipschitz with respect to
  the $\ell_2$-norm.  Let $\prox(x) = \half \ltwo{x}^2$ and assume that
  $\xdomain$ is compact with $\ltwo{x - x^*} \le \radius$ for $x, x^* \in
  \xdomain$. Using smoothing distribution $\smoothingdist$ uniform on $B_2(0,
  1)$, smoothing parameter $\smoothparam = \radius d^{1/4}$, damping parameter
  \mbox{$\extrastep_t = \lipobj \sqrt{t + 1}/ \radius \sqrt{m}$,} and
  instantaneous Lipschitz estimate $L_t = \lipobj \sqrt{d} / \smoothparam_t$,
  with probability at least $1 - \delta$,
  \begin{align*}
    \lefteqn{f(x_T) + \regularizer(x_T) - f(x^*) - \regularizer(x^*)} \\
    & = \order\Bigg(
    \frac{\lipobj \radius d^{1/4}}{T}
    + \frac{\lipobj \radius}{\sqrt{T m}}
    + \frac{\lipobj \radius \sqrt{\log \frac{1}{\delta}}}{\sqrt{Tm}}
    + \frac{\lipobj \radius
      \max\{\log\frac{1}{\delta}, \log T\}}{T\sqrt{m}}
    \Bigg).
  \end{align*}
\end{corollary}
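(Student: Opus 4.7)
The plan is to deduce Corollary \ref{corollary:highprob} by plugging the concrete setting into Theorem \ref{theorem:main-theorem-highprob-no-horizon}. Since $\smoothingdist$ is uniform on $B_2(0,1)$ and $F(\cdot;\statsample)$ is $\lipobj$-Lipschitz with respect to $\ltwo{\cdot}$, the smoothing results collected in Appendix~\ref{appendix:smoothing} supply Assumption~\ref{assumption:smoothness} with Lipschitz constant $\lipobj$ and gradient-Lipschitz constant $\lipgrad=\lipobj\sqrt{d}$, which matches the choice $L_t = \lipobj\sqrt{d}/\smoothparam_t$ used in the corollary. With $\smoothparam = \radius d^{1/4}$ the first two terms $\frac{6\lipgrad \prox(x^*)}{T\smoothparam}$ and $\frac{4\lipobj\smoothparam}{T}$ of Theorem~\ref{theorem:main-theorem-highprob-no-horizon} collapse to $O(\lipobj \radius d^{1/4}/T)$, and with $\extrastep_t=\lipobj\sqrt{t+1}/(\radius\sqrt{m})$ the $\frac{4\extrastep_T\prox(x^*)}{T+1}$ term becomes $O(\lipobj\radius/\sqrt{Tm})$.

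The main obstacle is verifying Assumption~\ref{assumption:subexp-errors} with an appropriate sub-Gaussian parameter $\stddev$, since the other hypotheses of Theorem~\ref{theorem:main-theorem-highprob-no-horizon} follow immediately from the corollary's assumptions. Conditional on $\mathcal{F}_{t-1}$ (in particular on $y_t$), the individual stochastic gradients $g_{i,t}\in\partial F(y_t+\smoothparam_t Z_{i,t};\statsample_{i,t})$ are i.i.d.\ with mean $\nabla\smoothedfunc[t](y_t)$, and each satisfies $\ltwo{g_{i,t}}\le\lipobj$ by the Lipschitz hypothesis, so the centered vectors $g_{i,t}-\nabla\smoothedfunc[t](y_t)$ are i.i.d.\ zero-mean with norm bounded by $2\lipobj$. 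A vector Hoeffding/Azuma argument then shows that $\error_t=\nabla\smoothedfunc[t](y_t)-\frac{1}{m}\sum_i g_{i,t}$ satisfies $\E[\exp(\ltwo{\error_t}^2/\stddev^2)\mid\mathcal{F}_{t-1}]\le e$ for some $\stddev = c\lipobj/\sqrt{m}$ with $c$ universal; this is the standard norm-squared sub-Gaussian bound for bounded-difference martingales and is where the $1/\sqrt{m}$ variance reduction appears.

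Once the sub-Gaussian constant is in hand, the remaining terms of Theorem~\ref{theorem:main-theorem-highprob-no-horizon} simplify by routine arithmetic. For the variance sum $\theta_{T-1}\sum_{t=0}^{T-1}\frac{1}{2\extrastep_t}\E[\ltwo{\error_t}^2]$, one uses $\E[\ltwo{\error_t}^2]\le C\lipobj^2/m$ (from the same boundedness that gives sub-Gaussianity), $\theta_{T-1}=O(1/T)$, and $\sum_{t=0}^{T-1}1/\sqrt{t+1}=O(\sqrt{T})$, producing $O(\lipobj\radius/\sqrt{Tm})$. The extra high-probability term $\frac{4\stddev^2\max\{\log(1/\delta),\sqrt{2e(\log T+1)\log(1/\delta)}\}}{\extrastep T}$ becomes, after substituting $\stddev^2=O(\lipobj^2/m)$ and $\extrastep=\lipobj/(\radius\sqrt{m})$, exactly $O\bigl(\frac{\lipobj\radius\max\{\log(1/\delta),\log T\}}{T\sqrt{m}}\bigr)$ (using $\sqrt{ab}\le a+b$ to split the second branch), and the final $\frac{\stddev \radius\sqrt{\log(1/\delta)}}{\sqrt{T}}$ becomes $O(\lipobj\radius\sqrt{\log(1/\delta)}/\sqrt{Tm})$. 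Summing these four contributions gives the claimed rate, with the dominant $\lipobj\radius d^{1/4}/T$ coming from the smoothing bias/variance balance, the $\lipobj\radius/\sqrt{Tm}$ from the averaged-gradient variance, and the remaining two terms from the high-probability correction.
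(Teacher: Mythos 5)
Your proposal is correct and follows essentially the same route as the paper: instantiate Theorem~\ref{theorem:main-theorem-highprob-no-horizon} with the uniform-on-$B_2$ smoothing constants from Lemma~\ref{lemma:ltwo}, verify Assumption~\ref{assumption:subexp-errors} via a bounded-difference martingale argument, and then substitute the stated choices of $\smoothparam$ and $\extrastep_t$. The paper packages the sub-Gaussianity verification as Lemma~\ref{lemma:lipschitz-sample-bound}, whose proof combines exactly the two ingredients you describe: a Doob-martingale/Azuma argument (Lemma~\ref{lemma:subgaussian-bounded-vectors}) showing $\ltwo{\error_t} - \E\ltwo{\error_t}$ is sub-Gaussian with parameter $O(\lipobj^2/m)$, followed by Lemma~\ref{lemma:second-order-subg}, which converts sub-Gaussianity of the \emph{centered} norm into the required exponential moment bound on the \emph{squared} norm. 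That second step is the only place your sketch is slightly compressed (Azuma alone controls $\exp(\lambda\ltwo{\error_t})$, not $\exp(\ltwo{\error_t}^2/\stddev^2)$ directly), but you correctly flag it as the ``standard norm-squared sub-Gaussian bound,'' which is precisely what that auxiliary lemma supplies. The remaining arithmetic, including the $\max\{\log(1/\delta),\sqrt{(1+\log T)\log(1/\delta)}\}\le\max\{\log(1/\delta),1+\log T\}$ simplification, matches the paper.
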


\paragraph{Remarks:}
We make two remarks about the above corollaries. The first is that if one
abandons the requirement that the optimization procedure be an ``anytime''
algorithm---always able to return a result---it is possible to give similar
results by using a fixed setting of $\smoothparam_t \equiv \smoothparam$
throughout. In particular, using Theorem~\ref{theorem:main-theorem} in
Section~\ref{sec:main-proof} we can use $\smoothparam_t = u / T$
to get essentially the same results as
Corollaries~\ref{corollary:variance-rate-ltwo}--\ref{corollary:variance-rate-linf}. As
a side benefit, it is then easier to satisfy the Lipschitz condition that
$\E[\norm{\partial F(x; \statsample)}^2] \le \lipobj^2$ for $x \in \xdomain +
\supp \smoothingdist$.
Our second observation is that Theorem~\ref{theorem:main-theorem-no-horizon}
and the corollaries appear to require a very specific setting of the constant
$L_t$ to achieve fast rates. However, the algorithm is in fact robust to
mis-specification of $L_t$, since the instantaneous smoothness constant $L_t$
is dominated by the stochastic damping term $\extrastep_t$ in the algorithm.
Indeed, since $\extrastep_t$ grows proportionally to $\sqrt{t}$ for each
corollary, we always have $L_t = \lipgrad / \smoothparam_t = \lipgrad /
\theta_t u = \order(\extrastep_t / \sqrt{t} \theta_t)$; that is, $L_t$ is
order $\sqrt{t}$ smaller than $\extrastep_t / \theta_t$, so setting $L_t$
incorrectly up to order $\sqrt{t}$ has essentially negligible effect. \\

We can show the bounds in the theorems above are tight, that is, unimprovable
up to constant factors, by exploiting known lower
bounds~\cite{NemirovskiYu83,AgarwalBaRaWa12} for stochastic optimization
problems.  We re-state some of these results here.  For instance, let
$\xdomain = \{x \in \R^d \mid \ltwo{x} \le \radius_2\}$, and consider all
convex functions $f$ that are $\lipobjtwo$-Lipschitz with respect to the
$\ell_2$-norm. Assume that the stochastic oracle, when queried at a point $x$,
returns a vector $g$ whose expectation is in $\partial f(x)$ with
$\E[\ltwo{g}^2] \le \lipobjtwo^2$. Then for \emph{any} method that outputs a
point $x_T \in \xdomain$ after $T$ queries of the oracle, we have the lower
bound
\begin{equation*}
  \sup_f \Big\{\E[f(x_T)] - \min_{x \in \xdomain} f(x) \Big\}
  = \Omega\left(\frac{\lipobjtwo \radius_2}{\sqrt{T}}\right),
\end{equation*}
where the supremum is taken over convex $f$ that are
$\lipobjtwo$-Lipschitz with respect to the $\ell_2$-norm~\cite[Section
  3.1]{AgarwalBaRaWa12}.  Similar bounds hold for problems with
non-Euclidean geometry~\cite{AgarwalBaRaWa12};
in particular, consider convex $f$ that are
$\lipobjinf$-Lipschitz with respect to the $\ell_1$-norm, that is,
$|f(x) - f(y)| \le \lipobjinf \lone{x - y}$. Then setting $\xdomain =
\{x \in \R^d \mid \lone{x} \le \radius_1\}$, we have $B_\infty(0,
\radius_1/d) \subset B_1(0, \radius_1)$ and thus
\begin{equation*}
  \sup_f \Big\{\E[f(x_T)] - \min_{x \in \xdomain} f(x) \Big\}
  = \Omega\left(\frac{\lipobjinf \radius_1}{\sqrt{T}}\right).
\end{equation*}
In either geometry, no method can have optimization error smaller than
$\order(LR / \sqrt{T})$ after $T$ queries of the stochastic oracle.

Let us compare the above lower bounds to the convergence rates in
Corollaries~\ref{corollary:variance-rate-ltwo} through
\ref{corollary:variance-rate-linf}. Examining the bound in
Corollaries~\ref{corollary:variance-rate-ltwo}
and~\ref{corollary:variance-rate-normal}, we see that the dominant
terms are order $\lipobj \radius / \sqrt{Tm}$ so long as $m \le T /
\sqrt{d}$. Since our method issues $Tm$ queries to the oracle, this is
optimal. Similarly, the strategy of sampling uniformly from the
$\ell_\infty$-ball in Corollary~\ref{corollary:variance-rate-linf} is
optimal up to factors logarithmic in the dimension. In contrast to
other optimization procedures, however, our algorithm performs
an update to the parameter $x_t$ only after every $m$ queries to the
oracle; as we show in the next section, this is beneficial in several
applications.

\section{Applications and experimental results}
\label{sec:applications}

In this section, we describe some applications of our results, and
then give experimental results that illustrate our
theoretical predictions.

\subsection{Some applications}
\label{subsec:applications}

The first application of our results is to parallel
computation and distributed optimization. Imagine that instead of
querying the stochastic oracle serially, we can issue queries and
aggregate the resulting stochastic gradients in parallel. In
particular, assume that we have a procedure in which the
$m$ queries of the stochastic oracle occur concurrently. Then
Corollaries~\ref{corollary:variance-rate-ltwo}--\ref{corollary:highprob}
imply that in the same amount of time required to perform
$T$ queries and updates of the stochastic gradient oracle serially,
achieving an optimization error of $\order(1/ \sqrt{T})$, the
parallel implementation can process $Tm$ queries and consequently
has optimization error $\order(1 / \sqrt{Tm})$.

We now briefly describe two possibilities for a distributed implementation of
the above.  The simplest architecture is a master-worker architecture, in
which one master maintains the parameters $(x_t, y_t, z_t)$, and each of $m$
workers has access to an uncorrelated stochastic oracle for $\statprob$ and
the smoothing distribution $\smoothingdist$. The master broadcasts the point
$y_t$ to the workers, which sample $\statsample_i \sim \statprob$ and $Z_i
\sim \smoothingdist$, returning sample gradients to the master. In a
tree-structured network, broadcast and aggregation require at most
$\order(\log m)$ steps; the relative speedup over a serial implementation is
$\order(m / \log m)$.  In recent work, Dekel et al.~\cite{DekelGiShXi12} give
a series of reductions showing how to distribute variance-based stochastic
algorithms and achieve an asymptotically optimal convergence rate. The
algorithm given here, as specified by
equations~\eqref{eqn:average-smooth-gradient}
and~\eqref{eqn:y-update}--\eqref{eqn:x-update}, can be exploited within their
framework to achieve an $\order(m)$ improvement in convergence rate over a
serial implementation.  More precisely, whereas achieving optimization error
$\epsilon$ requires $\order(1/\epsilon^2)$ iterations for a centralized
algorithm, the distributed adaptation requires only $\order(1 / (m\epsilon^2))$
iterations. Such an improvement is possible as a consequence of the variance
reduction techniques we have described.

A second application of interest involves problems where the set
$\xdomain$ and/or the function $\regularizer$ are complicated, so that
calculating the proximal update~\eqref{eqn:z-update} becomes
expensive. The proximal update may be distilled to computing
\begin{equation}
  \label{eqn:projection}
  \min_{x \in \xdomain} ~\big \{ \<g, x\> + \prox(x) \big \} ~~~
  \mbox{or} ~~~ \min_{x \in \xdomain} ~ \big \{ \<g, x\> + \prox(x) +
  \regularizer(x) \big \}.
\end{equation}
In such cases, it may be beneficial to accumulate gradients by
querying the stochastic oracle several times in each iteration, using
the averaged subgradient in the update~\eqref{eqn:z-update}, and thus
solve only one proximal sub-problem for a collection of samples.

Let us consider some concrete examples.  In statistical applications
involving the estimation of covariance matrices, the domain $\xdomain$
is constrained in the positive semidefinite cone $\{X \in \S_n \mid X
\succeq 0\}$; other applications involve additional nuclear-norm
constraints of the form $\xdomain = \{X \in \R^{d_1 \times d_2} \mid
\sum_{j=1}^{\min\{d_1, d_2\}} \singval_j(X) \le C\}$.  Examples of
such problems include covariance matrix estimation, matrix completion,
and model identification in vector autoregressive processes (see the
paper~\cite{NegahbanWa10} and references therein for further
discussion).
Another example is the problem of metric
learning~\cite{XingNgJoRu03,ShalevSiNg04}, in which the learner is
given a set of $n$ points $\{a_1, \ldots, a_n\} \subset \R^d$ and a
matrix $B \in \R^{n \times n}$ indicating which points are close
together in an unknown metric. The goal is to estimate a positive
semidefinite matrix $X \succeq 0$ such that $\<(a_i - a_j), X(a_i -
a_j)\>$ is small when $a_i$ and $a_j$ belong to the same class or are
close, while $\<(a_i - a_j), X(a_i - a_j)\>$ is large when $a_i$ and
$a_j$ belong to different classes. It is desirable that the matrix $X$
have low rank, which allows the statistician to discover structure or
guarantee performance on unseen data.  As a concrete illustration,
suppose that we are given a matrix $B \in \{-1, 1\}^{n \times n}$,
where $b_{ij} = 1$ if $a_i$ and $a_j$ belong to the same class, and
$b_{ij} = -1$ otherwise.  In this case, one possible
optimization-based estimator involves solving the non-smooth program
\begin{equation}
  \label{eqn:metric-learning}
  \min_{X, x} ~ \frac{1}{\binom{n}{2}} \sum_{i < j} \hinge{1 +
    b_{ij}(\tr(X (a_i - a_j)(a_i - a_j)^\top) + x)} ~~ \mbox{s.t.} ~~
  X \succeq 0, ~ \tr(X) \le C.
\end{equation}
Now let us consider the cost of computing the projection
update~\eqref{eqn:projection} for the metric learning
problem~\eqref{eqn:metric-learning}. When $\prox(X) = \half
\frobenius{X}^2$, the update \eqref{eqn:projection} reduces for
appropriate choice of $V$ to
\begin{equation*}
  \min_X \half \frobenius{X - V}^2 ~~~ \mbox{subject to} ~~~
  X \succeq 0, ~ \tr(X) \le C.
\end{equation*}
(As a side-note, it is possible to generalize this update to Schatten
$p$-norms~\cite{DuchiShSiTe10}.)  The above problem is equivalent to
projecting the eigenvalues of $V$ to the simplex $\{x \in \R^d \mid x
\succeq 0, \<\onevec, x\> \le C\}$, which after an $\order(d^3)$
eigen-decomposition requires time $\order(d)$~\cite{Brucker84}. To see
the benefits of the randomized perturbation and averaging
technique~\eqref{eqn:average-smooth-gradient} over standard stochastic
gradient descent~\eqref{eqn:dual-averaging-update}, consider that the
cost of querying a stochastic oracle for the
objective~\eqref{eqn:metric-learning} for one sample pair $(i, j)$
requires time $\order(d^2)$. Thus, $m$ queries require $\order(md^2)$
computation, and each update requires $\order(d^3)$. So we see that
after $Tmd^2 + Td^3$ units of computation, our randomized perturbation
method has optimization error $\order(1 / \sqrt{Tm})$, while standard
stochastic gradient requires $Tmd^3$ units of computation.  In short,
for $m \approx d$ the randomized smoothing
technique~\eqref{eqn:average-smooth-gradient} uses a factor
$\order(d)$ less computation than standard stochastic gradient; we
give experiments corroborating this in
Section~\ref{sec:experiments-metric-learning}.

\subsection{Experimental results} 
\label{sec:experiments}

We now describe some experimental results that confirm the sharpness
of our theoretical predictions.  The first experiment explores the
benefit of using multiple samples $m$ when estimating the gradient
$\nabla f(y_t)$ as in the averaging
step~\eqref{eqn:average-smooth-gradient}. The second experiment
studies the actual amount of time required to solve a statistical
metric learning problem, as described in the
objective~\eqref{eqn:metric-learning} above. The third investigates
whether the smoothing technique is essential to algorithms solving
non-smooth stochastic problems---that is, whether the smoothing is
only a proof device or whether it is necessary to achieve good
performance.

\subsubsection{Iteration Complexity of Reduced Variance Estimators}
\label{sec:iteration-complexity}

\begin{figure}[ht!]
  \begin{center}
    \begin{tabular}{ccc}
          \includegraphics[width=.4 \textwidth]{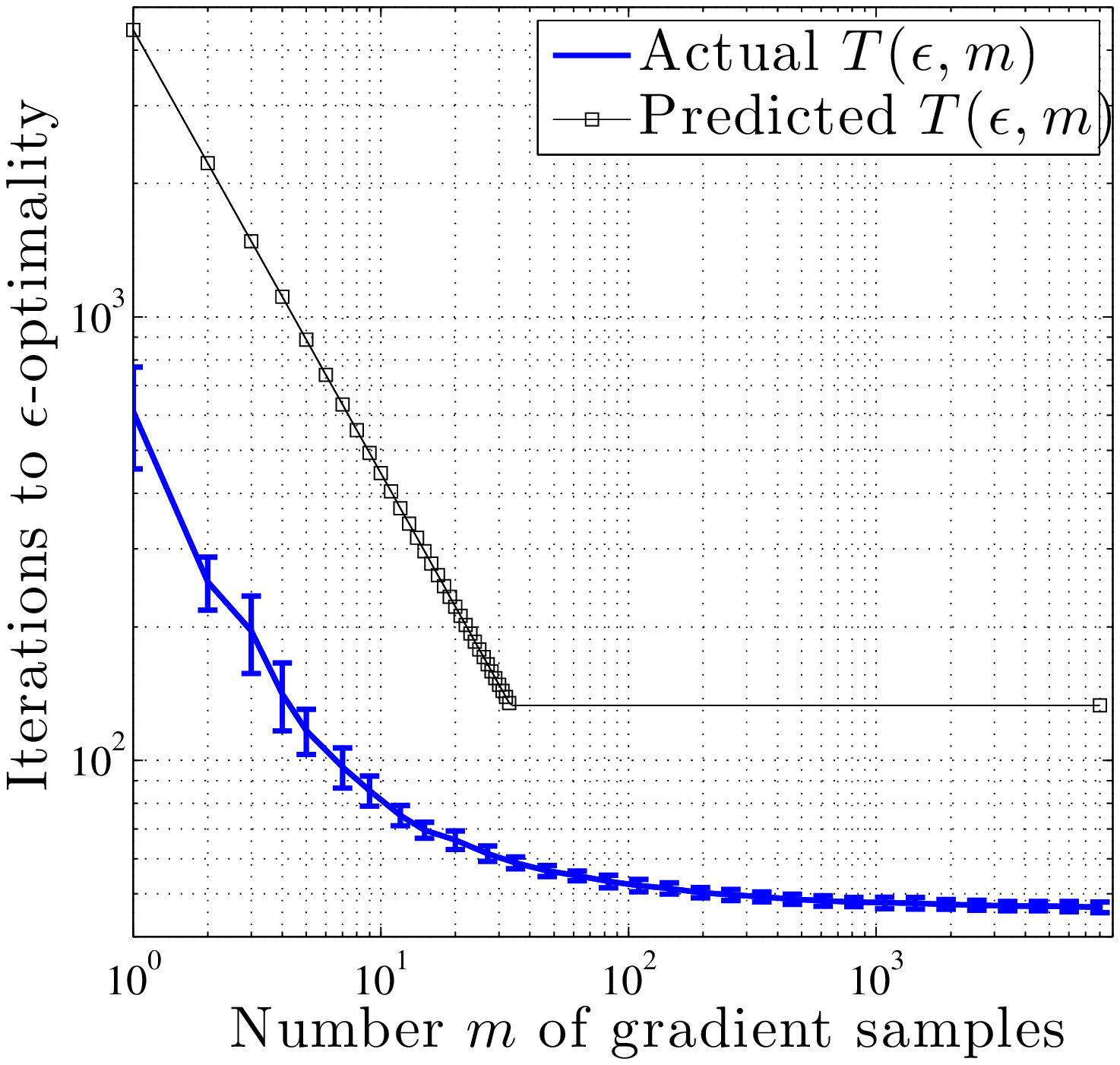} & &
          \includegraphics[width=.4 \textwidth]{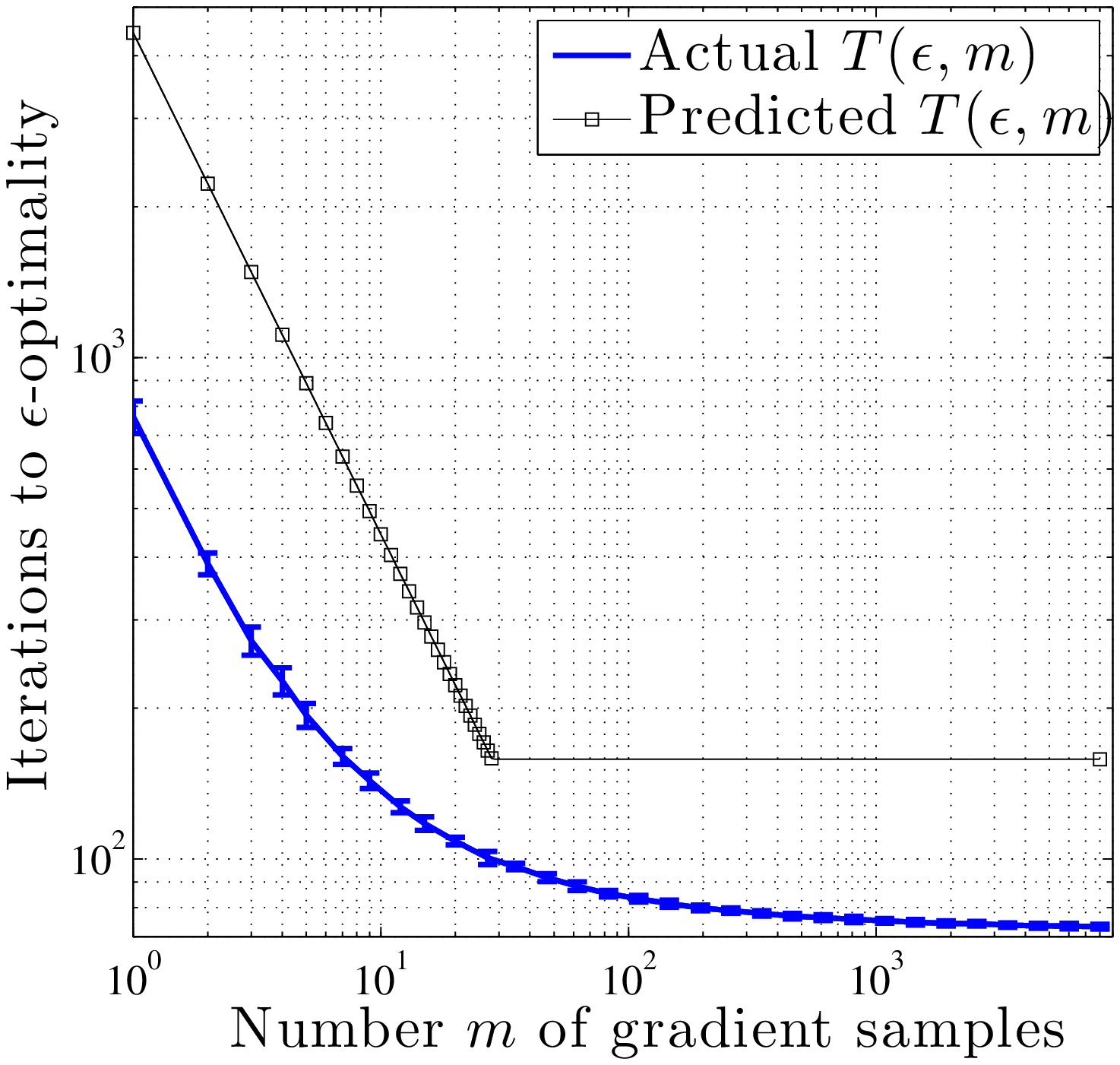} \\
          (a) & & (b) \\
          \includegraphics[width=.4 \textwidth]{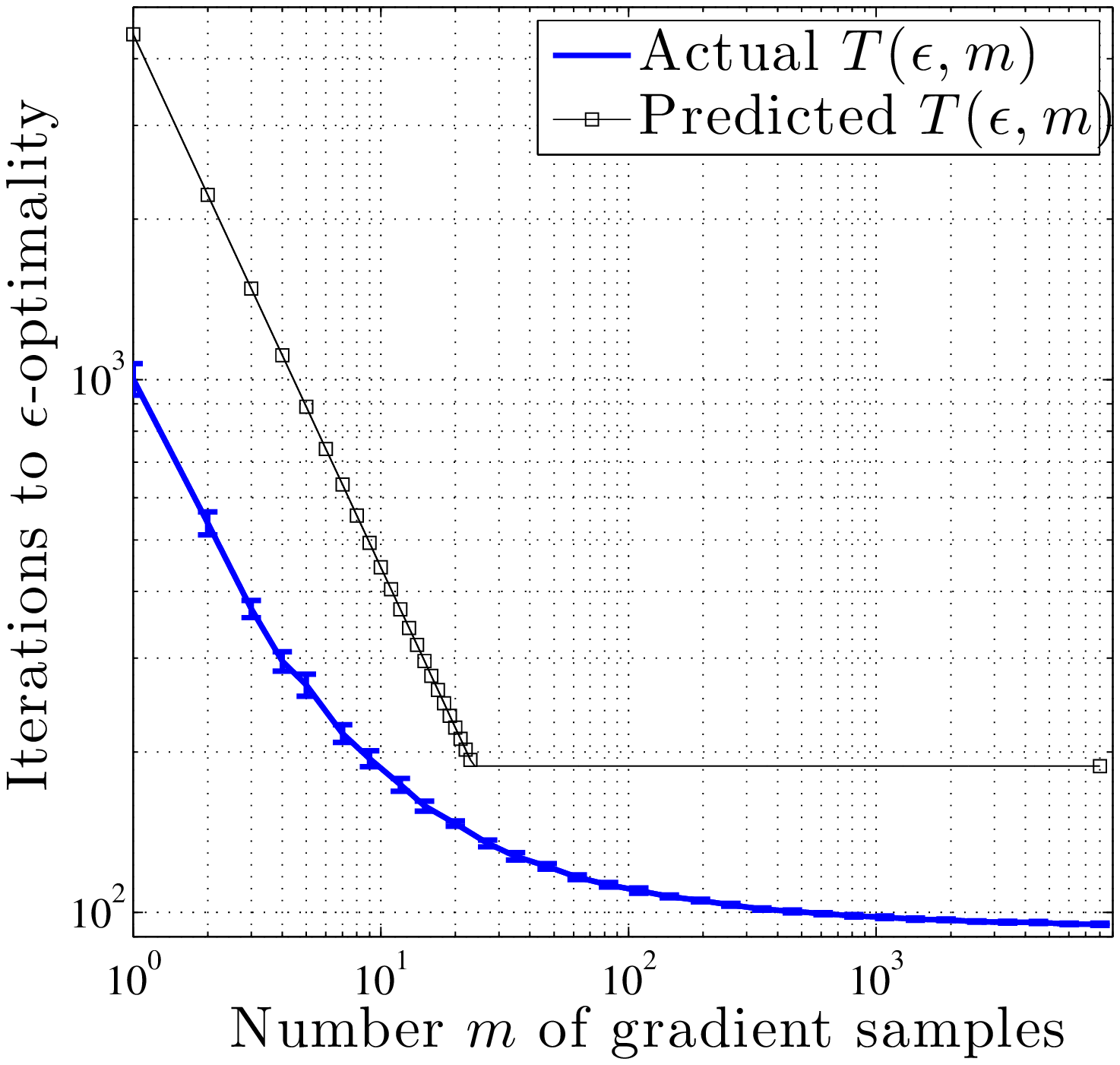} & &
          \includegraphics[width=.4 \textwidth]{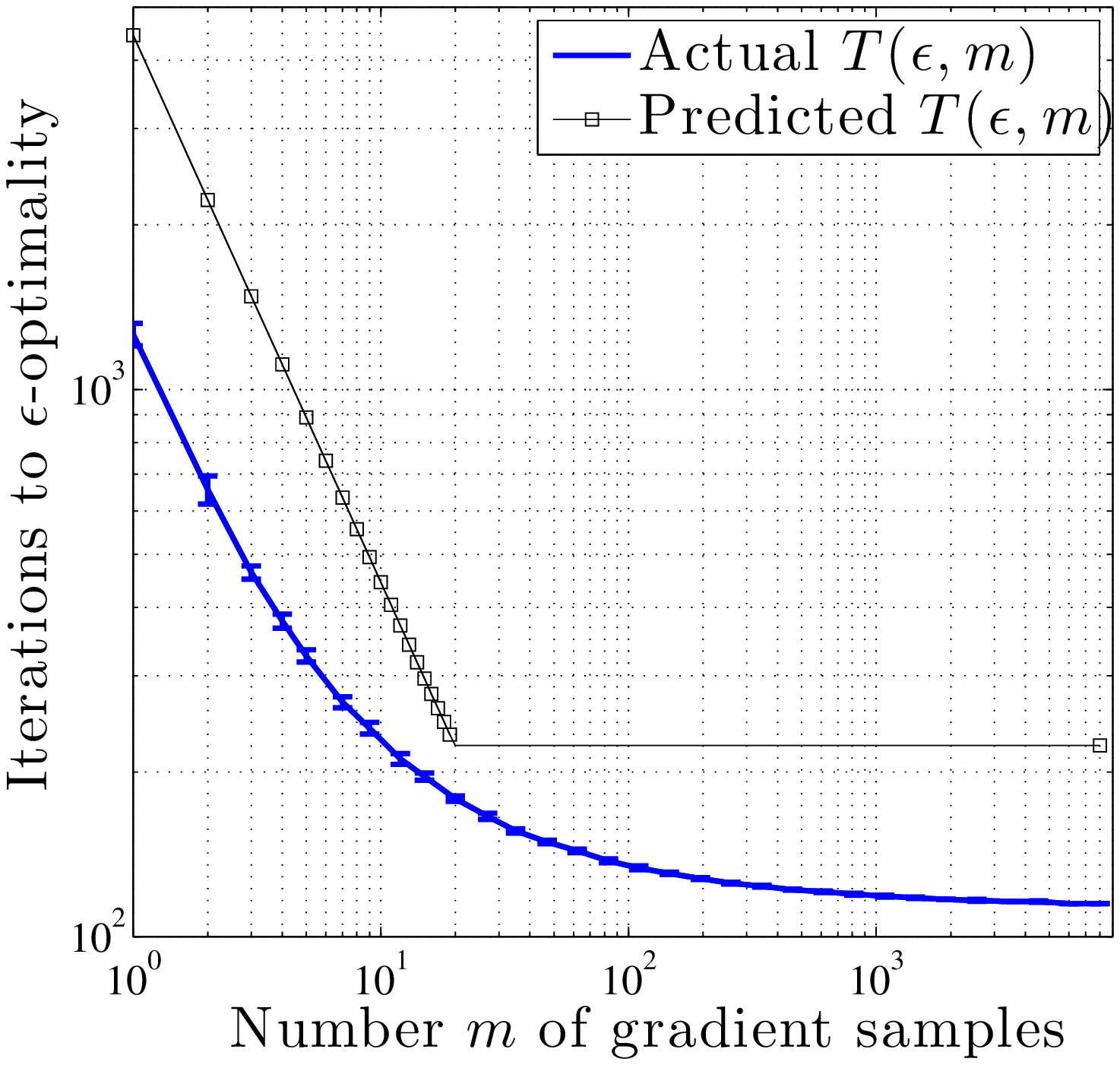} \\
          (c) & & (d)
        \end{tabular}
        \caption{\label{fig:time-to-epsilon} The number of iterations
          $T(\epsilon, m)$ to achieve an $\epsilon$-optimal solution for the
          problem \eqref{eqn:robust-linreg} as a function of the number of
          samples $m$ used in the gradient
          estimate~\eqref{eqn:average-smooth-gradient}. The
          prediction~\eqref{eqn:time-to-epsilon-ltwo-prediction}
          is the square black line in each plot, and each plot
          shows results for different dimensions $d$: (a) $d = 50$,
          (b) $d = 100$, (c) $d = 200$, (d) $d = 400$}
  \end{center}
\end{figure}


In this experiment, we consider the number of iterations of the
accelerated method \eqref{eqn:y-update}--\eqref{eqn:x-update} required
to achieve an $\epsilon$-optimal solution to the problem
\eqref{eqn:objective}. To understand how the iteration scales with the
number $m$ of gradient samples, it can be useful to consider our
results in terms of the number of iterations $T(\epsilon, m)$ required
to achieve optimization error $\epsilon$ for the optimization
procedure when using $m$ gradient samples in the averaging
step~\eqref{eqn:average-smooth-gradient}. Specifically, we define
\begin{equation*}
  T(\epsilon, m) = \inf\Big\{t \in \N \mid f(x_t) - \min_{x \in \xdomain}
  f(x^*) \le \epsilon \Big\}.
\end{equation*}
We focus on the algorithm analyzed in
Corollary~\ref{corollary:variance-rate-ltwo}, which uses uniform sampling of
the $\ell_2$-ball. The theorem implies there should be two regimes of
convergence: one when the number $m$ of samples is small, so that the
$\lipobj\radius / \sqrt{Tm}$ term is dominant, and the other when $m$ is
large, so the $\lipobj \radius d^{1/4} / T$ term is dominant. By inverting the first
term, we see that for small $m$, $T(\epsilon, m) = \order(\lipobj^2 \radius^2 / m
\epsilon^2)$, while the second gives $T(\epsilon, m) = \order(\lipobj \radius
d^{1/4} / \epsilon)$. In particular, our theory predicts
\begin{equation}
  T(\epsilon, m) = \order\left(\max\left\{
  \frac{\lipobj^2 \radius^2}{m \epsilon^2},
  \frac{\lipobj \radius d^{1/4}}{\epsilon} \right\}\right).
  \label{eqn:time-to-epsilon-ltwo-prediction}
\end{equation}

To assess the accuracy of the
prediction~\eqref{eqn:time-to-epsilon-ltwo-prediction}, we consider a
robust linear regression problem, commonly studied in system
identification and robust
statistics~\cite{PolyakTs80,Huber81}. Specifically, we have a matrix
$A \in \R^{n \times d}$ and vector $b \in \R^n$, and seek to minimize
\begin{equation}
  \label{eqn:robust-linreg}
  f(x) = \frac{1}{n} \lone{Ax - b} = \frac{1}{n} \sum_{i=1}^n |\<a_i,
  x\> - b_i|,
\end{equation}
where $a_i \in \R^d$ denotes a transposed row of $A$. It is clear that the
problem \eqref{eqn:robust-linreg} is non-smooth. The stochastic oracle in this
experiment, when queried at a point $x$, chooses an index $i \in [n]$
uniformly at random and returns $\sign(\<a_i, x\> - b_i) a_i$.

To perform our experiments, we generate $n = 1000$ points in dimensions
\mbox{$d \in \{50 \cdot 2^i\}_{i=0}^5$,} each with fixed norm
$\ltwo{a_i} = \lipobj$, and then assign values $b_i$ by computing $\<a_i,
w\>$ for a random vector $w$ (adding normally distributed noise with variance
$0.1$). We estimate the quantity $T(\epsilon, m)$ for solving the robust
regression problem~\eqref{eqn:robust-linreg} for several values of $m$ and
$d$.  Figure~\ref{fig:time-to-epsilon} shows results for dimensions \mbox{$d
  \in \{50, 100, 200, 400\}$}, averaged over 20 experiments for each choice of
dimension $d$.  (Other settings of $d$ exhibited similar behavior.)  Each
panel in the figure shows---on a log-log scale---the experimental average
$T(\epsilon, m)$ and the theoretical
prediction~\eqref{eqn:time-to-epsilon-ltwo-prediction}.  The decrease in
$T(\epsilon, m)$ is nearly linear for smaller numbers of samples $m$; for
larger $m$, the effect is quite diminished.  We present numerical results in
Table~\ref{table:timing} that allow us to roughly estimate the number $m$ at
which increasing the batch size in the gradient
estimate~\eqref{eqn:average-smooth-gradient} gives no further improvement. For
each dimension $d$, Table~\ref{table:timing} indeed shows that from $m = 1$ to
$5$, the iteration count $T(\epsilon, m)$ decreases linearly, halving again
when we reach $m \approx 20$, but between $m = 100$ and $1000$ there is at
most an 11\% difference in $T(\epsilon, m)$, while between $m = 1000$ and $m =
10000$ the decrease amounts to at most 3\%.  The good qualitative match
between the iteration complexity predicted by our theory and the actual
performance of the methods is clear.

\begin{table}
  \begin{center}
  \begin{tabular}{|cc|c|c|c|c|c|c|c|c|}
    \hline
    & $m$ & 1 & 2 & 3 & 5 & 20 & 100 & 1000 & 10000 \\
    \hline
    \multirow{2}{*}{$d = 50$}
    & \small{\textsc{Mean}} & 612.2 &252.7 &195.9 &116.7 &66.1 &52.2 &47.7 &46.6 \\
    & \small{\textsc{Std}} & 158.29 &34.67 &38.87 &13.63 &3.18 &1.66 &1.42 &1.28 \\
    \hline
    \multirow{2}{*}{$d = 100$}
    & \small{\textsc{Mean}} & 762.5 &388.3 &272.4 &193.6 &108.6 &83.3 &75.3 &73.3 \\
    & \small{\textsc{Std}} & 56.70 &19.50 &17.59 &10.65 &1.91 &1.27 &0.78 &0.78 \\
    \hline
    \multirow{2}{*}{$d = 200$}
    & \small{\textsc{Mean}} & 1002.7 &537.8 &371.1 &267.2 &146.8 &109.8 &97.9 &95.0 \\
    & \small{\textsc{Std}} & 68.64 &26.94 &13.75 &12.70 &1.66 &1.25 &0.54 &0.45 \\
    \hline
    \multirow{2}{*}{$d = 400$}
    & \small{\textsc{Mean}} & 1261.9 &656.2 &463.2 &326.1 &178.8 &133.6 &118.6 &115.0 \\
    & \small{\textsc{Std}} & 60.17 &38.59 &12.97 &8.36 &2.04 &1.02 &0.49 &0.00 \\
    \hline
    \multirow{2}{*}{$d = 800$}
    & \small{\textsc{Mean}} & 1477.1 &783.9 &557.9 &388.3 &215.3 &160.6 &142.0 &137.4 \\
    & \small{\textsc{Std}} & 44.29 &24.87 &12.30 &9.49 &2.90 &0.66 &0.00 &0.49 \\
    \hline
    \multirow{2}{*}{$d \hspace{-.029cm} = \hspace{-.029cm}
      1\hspace{-.015cm}600$}
    & \small{\textsc{Mean}} & 1609.5 &862.5 &632.0 &448.9 &251.5 &191.1 &169.4 &164.0 \\
    & \small{\textsc{Std}} & 42.83 &30.55 &12.73 &8.17 &2.73 &0.30 &0.49 &0.00 \\
    \hline
  \end{tabular}
  \end{center}
  \caption{\label{table:timing} The number of iterations $T(\epsilon,
    m)$ to achieve $\epsilon$-accuracy for the regression
    problem~\eqref{eqn:robust-linreg} as a function of number of
    gradient samples $m$ used in the gradient
    estimate~\eqref{eqn:average-smooth-gradient} and the dimension
    $d$.  Each box in the table shows the mean and standard deviation
    of $T(\epsilon, m)$ measured over $20$ trials.}
\end{table}

\subsubsection{Metric Learning}
\label{sec:experiments-metric-learning}

\begin{figure}[t]
  \begin{center}
    \includegraphics[width=.5 \textwidth]{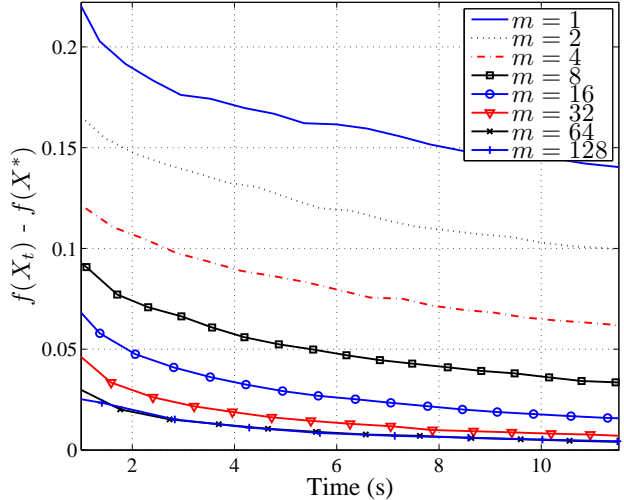}
    \caption{\label{fig:metric-learning} Optimization error $f(X_t) -
      \inf_{X^* \in \xdomain}f(X^*)$ in the metric learning problem of
      Sec.~\ref{sec:experiments-metric-learning} as a function of time in
      seconds. Each line indicates optimization error over time for a
      particular number of samples $m$ in the gradient
      estimate~\eqref{eqn:average-smooth-gradient}; we set $m = 2^i$ for $i =
      \{1, \ldots, 7\}$.}
  \end{center}
\end{figure}

Our second set of experiments apply to instances of metric learning.
The data we receive consists of a set of vectors $a_i \in \R^d$ and
measures $b_{ij} \ge 0$ of the similarity between the vectors $a_i$
and $a_j$ (here $b_{ij} = 0$ means that $a_i$ and $a_j$ are the
same). The statistical goal is to learn a matrix $X$---inducing a
pseudo-norm via $\norm{a}_X^2 \defeq \<a, X a\>$---such that $\<(a_i -
a_j), X(a_i - a_j)\> \approx b_{ij}$. Consequently, we solve the
regression-like problem
\begin{equation*}
  f(X) = \frac{1}{\binom{n}{2}} \sum_{i < j}
  \left|\tr\left(X(a_i - a_j)(a_i - a_j)^\top\right) - b_{ij}\right|
  ~~~ \mbox{subject to} ~~~
  \tr(X) \le C, ~ X \succeq 0.
\end{equation*}
The stochastic oracle for this problem is simple: given a query matrix $X$,
the oracle chooses a pair $(i, j)$ uniformly at random, then returns the
subgradient
\begin{equation*}
  \sign\left[\<(a_i - a_j), X(a_i - a_j)\> - b_{ij}\right]
  (a_i - a_j)(a_i - a_j)^\top.
\end{equation*}
We solve ten random problems with dimension $d = 100$ and $n = 2000$, giving
an objective with $4 \cdot 10^6$ terms and $5050$ parameters.  We plot
experimental results in Fig.~\ref{fig:metric-learning} showing the optimality
gap $f(X_t) - \inf_{X^* \in \xdomain} f(X^*)$ as a function of computation
time. We plot several lines, each of which captures the performance of the
algorithm using a different number $m$ of samples in the smoothing step
\eqref{eqn:average-smooth-gradient}. It is clear that performing stochastic
optimization is more viable for this problem than a non-stochastic method, as
even computing the objective requires $\order(n^2 d^2)$ operations. As
predicted by our theory and discussion in Sec.~\ref{sec:applications}, it is
clear that receiving more samples $m$ gives improvements in convergence rate
as a function of time. Our theory also predicts that for $m \ge d$, there
should be no improvement in actual time taken to minimize the objective; the
plot in Fig.~\ref{fig:metric-learning} suggests that this too is correct, as
the plots for $m = 64$ and $m = 128$ are essentially indistinguishable.

\subsubsection{Necessity of randomized smoothing}

\begin{figure}
  \begin{center}
    \includegraphics[width= 0.5\textwidth]{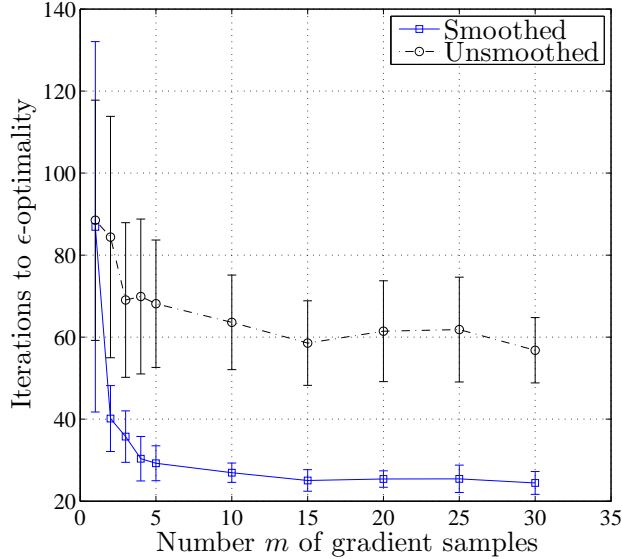}
    \caption{\label{fig:necessity-smoothing} The number of iterations
      $T(\epsilon, m)$ to achieve an $\epsilon$-optimal solution to
      \eqref{eqn:l1-centroid} for simple mirror descent and the smoothed
      gradient method.}
  \end{center}
\end{figure}

A reasonable question is whether the extra sophistication of the
random smoothing~\eqref{eqn:average-smooth-gradient} is necessary. Can
receiving more samples $m$ from the stochastic oracle---all evaluated
at the same point---give the same benefit to the simple dual averaging
method~\eqref{eqn:dual-averaging-update}?  We do not know the full
answer to this question, though we give an experiment here that
suggests that the answer is negative, in that smoothing does give
demonstrable improvement.

For this experiment, we use the objective
\begin{equation}
  \label{eqn:l1-centroid}
  f(x) = \frac{1}{n} \sum_{i=1}^n \lone{x - a_i},
\end{equation}
where the $a_i \in \{-1, +1\}^d$, and each component $j$ of the vector $a_i$
is sampled independently from $\{-1, 1\}$ and equal to $1$ with probability $1
/ \sqrt{j}$. Even as $n \uparrow \infty$, the function $f$ remains non-smooth,
since the $a_i$ belong to a discrete set and each value of $a_i$ occurs with
positive probability. As in Sec.~\ref{sec:iteration-complexity}, we compute
$T(\epsilon, m)$ to be the number of iterations required to achieve an
$\epsilon$-optimal solution to the objective~\eqref{eqn:l1-centroid}. We
compare two algorithms that use $m$ queries of the stochastic gradient oracle,
which when queried at a point $x$ chooses an index $i \in [n]$ uniformly at
random and returns $\sign(x - a_i) \in \partial \lone{x - a_i}$. The first
algorithm is the dual averaging algorithm~\eqref{eqn:dual-averaging-update},
where $g_t$ is the average of $m$ queries to the stochastic oracle at the
current iterate $x_t$. The second is the accelerated method
\eqref{eqn:y-update}--\eqref{eqn:x-update} with the randomized
averaging~\eqref{eqn:average-smooth-gradient}.  We plot the results in
Fig.~\ref{fig:necessity-smoothing}. We plot the best stepsize sequence
$\stepsize_t$ for the update~\eqref{eqn:dual-averaging-update} of several we
tested to make comparison as favorable as possible for simple mirror
descent. It is clear that while there is moderate improvement for the
non-smooth method when the number of samples $m$ grows, and both methods are
(unsurprisingly) essentially indistinguishable for $m = 1$, the smoothed
sampling strategy has much better iteration complexity as $m$ grows.


\section{Proofs} 
\label{sec:main-result-proofs}

In this section, we provide the proofs of
Theorems~\ref{theorem:main-theorem-no-horizon}
and~\ref{theorem:main-theorem-highprob-no-horizon}, as well as
Corollaries~\ref{corollary:variance-rate-ltwo}
through~\ref{corollary:highprob}. We begin with the proofs of the
corollaries, after which we give the full proofs of the theorems. 
In both cases, we defer some of the more technical lemmas to appendices.

The general technique for the proof of each corollary is as
follows. First, we recognize that the randomly smoothed function
$\smoothedfunc(x) = \E f(x + Z)$ for $Z \sim \smoothingdist$ has
Lipschitz continuous gradients and is uniformly close to the original
non-smooth function $f$. This allows us to apply
Theorems~\ref{theorem:main-theorem}
or~\ref{theorem:main-theorem-no-horizon}. The second step is to
realize that with the sampling
procedure~\eqref{eqn:average-smooth-gradient}, the variance $\E
\dnorm{\error_t}^2$ decreases at a rate of approximately $1/m$, the
number of gradient samples. Choosing the stepsizes appropriately in
the theorems then completes the proofs.  Proofs of these corollaries
require relatively tight control of the smoothness properties of the
smoothing convolution~\eqref{eqn:convolve}, so we refer
frequently to several lemmas stated in
Appendix~\ref{appendix:smoothing}.

\subsection{Proof of Corollaries~\ref{corollary:variance-rate-ltwo}
 and~\ref{corollary:variance-rate-normal}}

We begin by proving Corollary~\ref{corollary:variance-rate-ltwo}.  Recall the
averaged quantity $g_t = \frac{1}{m} \sum_{i=1}^m g_{i,t}$, and that $g_{i,t}
\in \partial F(y_t + \smoothparam_t Z_i; \statsample_i)$, where the random
variables $Z_i$ are distributed uniformly on the ball $B_2(0, 1)$.
From Lemma~\ref{lemma:ltwo} in Appendix~\ref{appendix:smoothing}, the
variance of $g_t$ as an estimate of $\nabla \smoothedfunc[t](y_t)$ satisfies
\begin{align}
  \label{EqnBanffMBound}
  \stddev^2 \, \defn \, \E[\ltwo{e_t}^2] \; = \; \E[\ltwo{g_t - \nabla
    \smoothedfunc[t](y_t)}^2] & \le \; \frac{\lipobj^2}{m}.
\end{align}
Further, for $Z$ distributed uniformly on $B_2(0, 1)$,
we have the bound
\begin{equation*}
  f(x) \le \E[f(x + \smoothparam Z)] \le f(x) + \lipobj \smoothparam,
\end{equation*}
and moreover, the function $x \mapsto \E_\smoothingdist[f(x + \smoothparam Z)]$ has $\lipobj \sqrt{d}/
\smoothparam$-Lipschitz continuous gradient.  Thus, applying
Lemma~\ref{lemma:ltwo} and
Theorem~\ref{theorem:main-theorem-no-horizon} with the setting $L_t =
\lipobj \sqrt{d} / u \theta_t$, we obtain
\begin{equation*}
\E[f(x_T) + \regularizer(x_T)] - [f(x^*) + \regularizer(x^*)] \le
\frac{6\lipobj \radius^2 \sqrt{d}}{T u} + \frac{2 \extrastep_T
    \radius^2}{T} + \frac{1}{T} \sum_{t=0}^{T-1}
    \frac{1}{\extrastep_t} \cdot \frac{ \lipobj^2}{m} + \frac{4\lipobj
    u}{T},
  \end{equation*}
where we have used the bound~\eqref{EqnBanffMBound}.

Recall that $\extrastep_t = \lipobj \sqrt{t + 1} / \radius
\sqrt{m}$ by construction.  Coupled with the inequality
\begin{equation}
  \label{eqn:sqrt-sum-bound}
  \sum_{t=1}^{T} \frac{1}{\sqrt{t}} \le 1 + \int_1^T
  \frac{1}{\sqrt{t}} dt = 1 + 2(\sqrt{T} - 1) \le 2 \sqrt{T},
\end{equation}
we use that $2 \sqrt{T + 1} / T + 2 / \sqrt{T} \le 5 / \sqrt{T}$ to obtain
\begin{equation*} 
  \E[f(x_T) + \regularizer(x_T)] - [f(x^*) + \regularizer(x^*)] \le
  \frac{6 \lipobj \radius^2 \sqrt{d}}{T \smoothparam} + \frac{5
    \lipobj \radius}{\sqrt{Tm}} + \frac{4 \lipobj \smoothparam}{T}.
\end{equation*}
Substituting the specified setting of $\smoothparam = \radius d^{1/4}$
completes the proof. \\

The proof of Corollary~\ref{corollary:variance-rate-normal} is essentially
identical, differing only in the setting of $\smoothparam = \radius d^{-1/4}$
and the application of Lemma~\ref{lemma:normal} instead of
Lemma~\ref{lemma:ltwo} in Appendix~\ref{appendix:smoothing}.

\subsection{Proof of Corollary~\ref{corollary:variance-rate-linf}}

Under the stated conditions of the corollary, Lemma~\ref{lemma:linf}
implies that when $\smoothingdist$ is uniform on $B_\infty(0,
\smoothparam)$, then the function $\smoothedfunc(x) \defn
\E_\smoothingdist[f(x + Z)]$ has $\lipobj / \smoothparam$-Lipschitz
continuous gradient with respect to the $\ell_1$-norm, and moreover it
satisfies the upper bound $\smoothedfunc(x) \le f(x) +
\frac{\lipobj d \smoothparam}{2}$.
Fix $x \in \xdomain$ and let $g_i \in \partial F(x + Z_i;
\statsample_i)$, with $g = \frac{1}{m}\sum_{i=1}^m g_i$. We claim that
for any $\smoothparam$ the
error satisfies
\begin{align}
  \label{EqnAirplane}
  \E\big[\!\linf{g - \nabla \smoothedfunc(x)}^2\!\big]
  & \leq C \; \frac{\lipobj^2 \log d}{m}
\end{align}
for some universal constant $C$.  Indeed, Lemma~\ref{lemma:linf} shows that
$\E[g] = \nabla \smoothedfunc(x)$; moreover, component $j$ of the
random vector $g_i$ is an unbiased estimator of the $j$th component of
$\nabla \smoothedfunc(x)$.  Since $\linf{g_i} \le \lipobj$ and
$\linf{\nabla \smoothedfunc(x)} \le \lipobj$, the vector $g_i - \nabla
\smoothedfunc(x)$ is a $d$-dimensional random vector whose
components are sub-Gaussian with sub-Gaussian parameter
$4\lipobj^2$. Conditional on $x$, the $g_i$ are independent, so
$g - \nabla \smoothedfunc(x)$ has sub-Gaussian components with parameter
at most $4 \lipobj^2/m$. Applying Lemma~\ref{lemma:sub-gaussian} (see
Appendix~\ref{AppTail}) with $X = g - \nabla \smoothedfunc(x)$ and
$\sigma^2 = 4 \lipobj^2 / m$ yields the claim~\eqref{EqnAirplane}.

Now, as in the proof of Corollary~\ref{corollary:variance-rate-ltwo},
we can apply Theorem~\ref{theorem:main-theorem-no-horizon}.  Recall
that $\frac{1}{2(p - 1)} \norm{x}_p^2$ is strongly convex over $\R^d$
with respect to the $\ell_p$-norm for any $p \in
\openleft{1}{2}$~\cite{NemirovskiYu83}. Thus, with the choice
$\prox(x) = \frac{1}{2(p - 1)} \norm{x}_p^2$ for $p = 1 + 1/ \log d$,
it is clear that the squared radius $\radius^2$ of the set $\xdomain$
is order $\norm{x^*}_p^2 \log d \le \lone{x^*}^2 \log d$.
All that remains is to relate the Lipschitz constant
$\lipobj$ with respect to the $\ell_1$ norm to that for the $\ell_p$
norm. Let $q$ be conjugate to $p$, that is, $1/q + 1/p = 1$. Under the
assumptions of the theorem, we have $q = 1 + \log d$. For any $g \in
\R^d$, we have
$\norm{g}_q \le d^{1/q} \linf{g}$. Of course, $d^{1 / (\log d + 1)}
\le d^{1 / (\log d)} = \exp(1)$, so $\norm{g}_q \le e \linf{g}$.

Having shown that the Lipschitz constant $L$ for the $\ell_p$ norm
satisfies $L \le \lipobj e$, where $\lipobj$ is the Lipschitz
constant with respect to the $\ell_1$ norm, we apply
Theorem~\ref{theorem:main-theorem-no-horizon} and the variance
bound~\eqref{EqnAirplane} to obtain the result. Specifically,
Theorem~\ref{theorem:main-theorem-no-horizon} implies
\begin{equation*}
  \E[f(x_T) + \regularizer(x_T)] - [f(x^*) + \regularizer(x^*)]
  \le \frac{6 \lipobj \radius^2}{T \smoothparam}
  + \frac{2 \extrastep_T \radius^2}{T}
  + \frac{C}{T} \sum_{t=0}^{T-1} \frac{1}{\extrastep_t}
  \cdot \frac{\lipobj^2 \log d}{m} + \frac{4 \lipobj d \smoothparam}{2T}.
\end{equation*}
Plugging in $\smoothparam$, $\extrastep_t$, and
$\radius \le \lone{x^*} \sqrt{\log d}$ and using
bound~\eqref{eqn:sqrt-sum-bound} completes the proof.

\subsection{Proof of Corollary~\ref{corollary:highprob}}

The proof of this corollary requires an auxiliary result showing that
Assumption~\ref{assumption:subexp-errors} holds under the stated conditions.
The following result does not appear to be well-known, so we provide a proof
in Appendix~\ref{appendix:proof-of-lipschitz-concentration}.  In stating it,
we recall the definition of the sigma field $\mathcal{F}_t$ from
Assumption~\ref{assumption:subexp-errors}.
\begin{lemma}
  \label{lemma:lipschitz-sample-bound}
  In the notation of
  Theorem~\ref{theorem:main-theorem-highprob-no-horizon}, suppose
  that $F(\cdot; \statsample)$ is $\lipobj$-Lipschitz continuous with
  respect to the norm $\norm{\cdot}$ over $\xdomain + \smoothparam_0 \supp
  \smoothingdist$ for $\statprob$-a.e.\ $\statsample$. Then
  \begin{equation*}
    \E\bigg[\exp\bigg(\frac{\dnorm{\error_t}^2}{\sigma^2}\bigg) \mid
      \mc{F}_{t-1} \bigg] \le \exp(1),
    ~~~ \mbox{where} ~~~
    \stddev^2 \defn
    2 \max\bigg\{\E[\dnorm{\error_t}^2 \mid \mc{F}_{t-1}],
    \: \frac{16 \lipobj^2}{m} \bigg\}.
  \end{equation*}
\end{lemma}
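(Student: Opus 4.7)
The proof has three ingredients: a uniform boundedness statement for the subgradient samples, a bounded-differences concentration inequality for $\dnorm{\error_t}$, and a conversion from a sub-Gaussian tail to a squared-exponential moment using $\mu^2 \le \E[\dnorm{\error_t}^2 \mid \mc{F}_{t-1}]$. Throughout I condition on $\mc{F}_{t-1}$, so that $y_t$ is fixed and the pairs $\{(Z_{i,t}, \statsample_{i,t})\}_{i=1}^m$ are independent.

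First, the Lipschitz hypothesis on $F(\cdot; \statsample)$ over $\xdomain + \smoothparam_0 \supp \smoothingdist$ — which by construction contains every query point $y_t + \smoothparam_t Z_{i,t}$ — gives $\dnorm{g_{i,t}} \le \lipobj$ almost surely. Applying Jensen's inequality yields $\dnorm{\nabla \smoothedfunc[t](y_t)} = \dnorm{\E[g_{i,t}\mid \mc{F}_{t-1}]} \le \lipobj$, so each summand $X_i := g_{i,t} - \nabla \smoothedfunc[t](y_t)$ is mean-zero with $\dnorm{X_i} \le 2\lipobj$.

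Second, view $\Phi((Z_i, \statsample_i)_{i=1}^m) = \dnorm{\error_t} = \dnorm{(1/m)\sum_i X_i}$ as a function of the iid pairs. Replacing a single pair changes the corresponding $X_i$ by at most $4\lipobj$ in dual norm, and by the triangle inequality this changes $\Phi$ by at most $4\lipobj/m$. McDiarmid's bounded-differences inequality then gives
\begin{equation*}
\mprob\big(|\dnorm{\error_t} - \mu| \ge s \,\big|\, \mc{F}_{t-1}\big) \le 2 \exp\!\Big(-\frac{m s^2}{8 \lipobj^2}\Big), \qquad \mu := \E[\dnorm{\error_t}\mid \mc{F}_{t-1}],
\end{equation*}
so $Y := \dnorm{\error_t} - \mu$ is conditionally sub-Gaussian with parameter $\tau^2 = 4\lipobj^2/m$.

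Third, the standard integration-by-parts identity converts this tail bound into
\begin{equation*}
\E[\exp(\lambda Y^2) \mid \mc{F}_{t-1}] \le 1 + \frac{4 \lambda \tau^2}{1 - 2 \lambda \tau^2} \quad \text{for } \lambda < \tfrac{1}{2\tau^2}.
\end{equation*}
Combining with the elementary bound $\dnorm{\error_t}^2 \le 2 Y^2 + 2 \mu^2$ and $\mu^2 \le V := \E[\dnorm{\error_t}^2\mid \mc{F}_{t-1}]$ (Jensen), the choice $\sigma^2 = 2 \max\{V, 16\lipobj^2/m\}$ ensures both $2V/\sigma^2 \le 1$ and $2/\sigma^2 \le 1/(4\tau^2)$; multiplying the resulting two exponential factors and applying the MGF bound yields the desired $\E[\exp(\dnorm{\error_t}^2/\sigma^2)\mid \mc{F}_{t-1}] \le \exp(1)$.

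The main obstacle is constant bookkeeping. The offset $16\lipobj^2/m$ must be large enough, relative to the McDiarmid parameter $\tau^2$, that the squared-MGF contribution from $Y$ is absorbed into the exponent, while $\mu^2$ is absorbed through the conditional variance $V$ via Jensen. Individually each step (boundedness, McDiarmid, and the sub-Gaussian-to-squared-MGF conversion) is routine; the content of the lemma is in the precise calibration of the constant $16$ that makes the sum of the two contributions in the exponent equal to $1$.
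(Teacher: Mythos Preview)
Your approach is essentially the paper's: bound each $\dnorm{X_i}$ by $2\lipobj$, use a bounded-differences/Doob-martingale argument to show $\dnorm{\error_t}-\mu$ is sub-Gaussian with parameter of order $\lipobj^2/m$, then convert to a bound on $\E\exp(c\,\dnorm{\error_t}^2)$. The paper packages the first step as its Lemma on $\norm{S_n}-\E\norm{S_n}$ (a Doob martingale identical in content to your McDiarmid step) and the second step via the Buldygin--Kozachenko identity
\[
\E\exp\!\Big(\frac{sX^2}{2\tau^2}\Big)\ \le\ \frac{1}{\sqrt{1-s}}\exp\!\Big(\frac{(\E X)^2}{2\tau^2}\cdot\frac{s}{1-s}\Big),
\]
applied directly to $X=\dnorm{\error_t}$, then weakened by replacing $\tau^2$ with $\max\{\E X^2,\,\tau^2\}$ and Jensen $(\E X)^2\le\E X^2$, after which $s=\tfrac12$ gives $\sqrt{2}\,e^{1/2}\le e$.

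The one place your write-up does not close is the final constant. With your splitting $\dnorm{\error_t}^2\le 2Y^2+2\mu^2$ and $\sigma^2=2\max\{V,16\lipobj^2/m\}$, you get $\exp(2\mu^2/\sigma^2)\le e$ and, from your integration-by-parts bound with $2\lambda\tau^2\le 1/2$, only $\E\exp(2Y^2/\sigma^2)\le 1+\tfrac{4\lambda\tau^2}{1-2\lambda\tau^2}\le 3$; the product is $3e$, not $e$. The decoupling $X^2\le 2Y^2+2\mu^2$ is exactly where the loss enters: the Buldygin--Kozachenko bound keeps the cross-term and yields the sharper factor $(1-s)^{-1/2}\exp\big(\tfrac12\cdot\tfrac{s}{1-s}\big)$, which \emph{does} land below $e$ at $s=1/2$. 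So your sketch proves the lemma with $\sigma^2$ inflated by a fixed numerical factor, but to hit the constant as stated you need the direct uncentered-square bound rather than the crude split.
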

\noindent

Using this lemma, we now prove Corollary~\ref{corollary:highprob}.  When
$\smoothingdist$ is the uniform distribution on $B_2(0, \smoothparam)$,
Lemma~\ref{lemma:ltwo} from Appendix~\ref{appendix:smoothing} implies that
$\nabla \smoothedfunc$ is Lipschitz with constant $\lipgrad = \lipobj
\sqrt{d} / u$.
Lemma~\ref{lemma:lipschitz-sample-bound} ensures that the error $e_t$
satisfies Assumption~\ref{assumption:subexp-errors}.  Noting the inequality
\begin{equation*}
  \max\left\{\log (1/\delta), \sqrt{(1 + \log T) \log (1 / \delta)}\right\} \le
  \max\{\log(1/\delta), 1 + \log T\}
\end{equation*}
and combining the bound in
Theorem~\ref{theorem:main-theorem-highprob-no-horizon} with
Lemma~\ref{lemma:lipschitz-sample-bound}, we see that
with probability at least $1 - 2\delta$
\begin{align*}
  \lefteqn{f(x_T) + \regularizer(x_T) - f(x^*) - \regularizer(x^*)} \\
  & \le \frac{6 \lipobj \radius^2 \sqrt{d}}{T \smoothparam}
  + \frac{4 \lipobj \smoothparam}{T}
  + \frac{4 \radius^2 \extrastep}{\sqrt{T + 1}}
  + \frac{2 \lipobj^2}{m \sqrt{T} \extrastep}
  + C \frac{\lipobj^2 \max\left\{\log\frac{1}{\delta}, \log T\right\}}{
    (T + 1) m \extrastep}
  + \frac{\lipobj \radius \sqrt{\log \frac{1}{\delta}}}{\sqrt{Tm}}
\end{align*}
for a universal constant $C$.
Setting $\extrastep = \lipobj / \radius \sqrt{m}$ and
$\smoothparam = \radius d^{1/4}$ gives the result.

\subsection{Proof of Theorem~\ref{theorem:main-theorem-no-horizon}}
\label{sec:main-proof}

This proof is more involved than that of the above corollaries. In particular,
we build on techniques used in the work of Tseng~\cite{Tseng08},
Lan~\cite{Lan10}, and Xiao~\cite{Xiao10}. The changing smoothness of the
stochastic objective---which comes from changing the shape parameter of the
sampling distribution $Z$ in the averaging
step~\eqref{eqn:average-smooth-gradient}---adds some challenge.  Essentially,
the idea of the proof is to let $\smoothingdist_t$ be the density of
$\smoothparam_t Z$ and define $\smoothedfunc[t](x) \defeq
\E_\smoothingdist[f(x + \smoothparam_t Z)]$, where $\smoothparam_t$ is the
non-increasing sequence of shape parameters in the averaging
scheme~\eqref{eqn:average-smooth-gradient}. We show via Jensen's inequality
that $f(x) \le \smoothedfunc[t](x) \le \smoothedfunc[t-1](x)$ for all $t$,
which is intuitive because the variance of the sampling scheme is
decreasing. Then we apply a suitable modification of the accelerated gradient
method~\cite{Tseng08} to the sequence of functions $\smoothedfunc[t]$
decreasing to $f$, and by allowing $\smoothparam_t$ to decrease appropriately
we achieve our result.  At the end of this section, we state a third result
(Theorem~\ref{theorem:main-theorem}), which gives an alternative setting for
$\smoothparam$ given a priori knowledge of the number of iterations.

We begin by stating two technical lemmas:
\begin{lemma}
  \label{lemma:variance-sequence-rda}
  Let $\smoothedfunc[t]$ be a sequence of functions such that $\smoothedfunc[t]$ has $L_t$-Lipschitz
  continuous gradients with respect to the norm $\norm{\cdot}$ and assume that
  $\smoothedfunc[t](x) \le \smoothedfunc[t-1](x)$ for any $x \in \xdomain$.  Let the sequence $\{x_t,
  y_t, z_t\}$ be generated according to the
  updates~\eqref{eqn:y-update}--\eqref{eqn:x-update}, and define the error
  term $\error_t = \nabla \smoothedfunc[t](y_t) - g_t$. Then for any $x^* \in \xdomain$,
  \begin{align*}
    \frac{1}{\theta_t^2} [\smoothedfunc[t](x_{t+1}) + \regularizer(x_{t+1})]
    & \le \sum_{\tau=0}^t \frac{1}{\theta_\tau}[
      \smoothedfunc[\tau](x^*) + \regularizer(x^*)]
    + \left(L_{t+1} + \frac{\extrastep_{t+1}}{\theta_{t+1}}\right) \prox(x^*)
    \\
    & \qquad \quad~ +
    \sum_{\tau=0}^t \frac{1}{2 \theta_\tau \extrastep_\tau} \dnorm{\error_t}^2
    + \sum_{\tau=0}^t \frac{1}{\theta_\tau} \<\error_\tau,
    z_\tau - x^*\>.
  \end{align*}
\end{lemma}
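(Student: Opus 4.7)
I will follow the Tseng-style accelerated dual-averaging analysis, extended to accommodate (i) a time-varying smooth surrogate $\smoothedfunc[t]$ whose smoothness constant $L_t$ and pointwise values shift as $\smoothparam_t$ decreases, and (ii) a stochastic gradient $g_t = \nabla \smoothedfunc[t](y_t) - \error_t$. The heart of the argument is an inductive statement of the form
\[
\frac{1}{\theta_t^2}\bigl[\smoothedfunc[t](x_{t+1}) + \regularizer(x_{t+1})\bigr] \le \Psi_t(x^*) + \text{(noise)},
\]
where $\Psi_t(x^*)$ is the right-hand side of the claimed inequality; the base case uses $\theta_0 = 1$ and the recursion $(1-\theta_t)/\theta_t^2 = 1/\theta_{t-1}^2$ drives the induction forward.

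\textbf{Step 1: per-iteration descent.} From the $L_t$-smoothness of $\smoothedfunc[t]$ and the algebraic identity $x_{t+1} - y_t = \theta_t(z_{t+1} - z_t)$ (which follows by combining \eqref{eqn:y-update} with \eqref{eqn:x-update}), I would first obtain
\[
\smoothedfunc[t](x_{t+1}) \le \smoothedfunc[t](y_t) + \theta_t \langle \nabla\smoothedfunc[t](y_t),\, z_{t+1} - z_t\rangle + \tfrac{L_t\theta_t^2}{2}\norm{z_{t+1} - z_t}^2.
\]
Substituting $\nabla\smoothedfunc[t](y_t) = g_t + \error_t$ and using convexity of $\smoothedfunc[t]$ at $y_t = (1-\theta_t)x_t + \theta_t z_t$ in the form $\smoothedfunc[t](y_t) + \theta_t\langle \nabla\smoothedfunc[t](y_t), x - z_t\rangle \le (1-\theta_t)\smoothedfunc[t](x_t) + \theta_t\smoothedfunc[t](x)$ (applied at $x = x^*$), together with the analogous convexity bound $\regularizer(x_{t+1}) \le (1-\theta_t)\regularizer(x_t) + \theta_t\regularizer(z_{t+1})$, I get a one-step inequality that separates a ``regret'' piece $\theta_t[\langle g_t, z_{t+1} - x^*\rangle + \regularizer(z_{t+1}) - \regularizer(x^*)]$, a quadratic $\norm{z_{t+1}-z_t}^2$ term, and error terms involving $\langle \error_t, \cdot\rangle$.

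\textbf{Step 2: regret from the dual-averaging update.} Summing the regret pieces across $\tau \le t$, I would invoke the first-order optimality condition for $z_{t+1}$ in \eqref{eqn:z-update} and strong convexity of the proximal function (with modulus $L_{t+1} + \extrastep_{t+1}/\theta_{t+1}$ after all the weights are collected) to bound
\[
\sum_{\tau=0}^t \tfrac{1}{\theta_\tau}\bigl[\langle g_\tau, z_{t+1} - x^*\rangle + \regularizer(z_{t+1}) - \regularizer(x^*)\bigr] \le \Bigl(L_{t+1} + \tfrac{\extrastep_{t+1}}{\theta_{t+1}}\Bigr)\prox(x^*) - \tfrac{1}{2}\sum_{\tau=0}^t (\text{strong-cvx slack}),
\]
in the style of Xiao's RDA analysis. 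Meanwhile, I split the error contribution as $\langle \error_t, z_{t+1} - x^*\rangle = \langle \error_t, z_t - x^*\rangle + \langle \error_t, z_{t+1} - z_t\rangle$ and apply Fenchel--Young to the second piece, giving $\tfrac{1}{2\extrastep_t}\dnorm{\error_t}^2 + \tfrac{\extrastep_t}{2}\norm{z_{t+1} - z_t}^2$. The quadratic $\tfrac{\extrastep_t}{2}\norm{z_{t+1}-z_t}^2$ is exactly what the damping term $\extrastep_t/\theta_t$ was introduced to absorb, and the residual $\tfrac{L_t\theta_t^2}{2}\norm{z_{t+1}-z_t}^2$ from Step 1 is dominated by the strong-convexity slack since $L_t$ matches the smoothness of $\smoothedfunc[t]$.

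\textbf{Step 3: induction and monotonicity.} Dividing through by $\theta_t^2$ and using $(1-\theta_t)/\theta_t^2 = 1/\theta_{t-1}^2$ converts the $(1-\theta_t)\smoothedfunc[t](x_t)$ term into a telescoping piece. Here the assumption $\smoothedfunc[t](x) \le \smoothedfunc[t-1](x)$---which reflects that decreasing the smoothing parameter moves the surrogate closer to $f$ from above---is crucial, since it lets me upper-bound $\smoothedfunc[t](x_t)$ by $\smoothedfunc[t-1](x_t)$ and close the induction cleanly. Telescoping also produces the sum $\sum_{\tau=0}^t \frac{1}{\theta_\tau}\smoothedfunc[\tau](x^*)$ on the right-hand side. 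Finally, the monotonicity of $\extrastep_t$ and $L_t\theta_t \le L_{t+1}\theta_{t+1}/(\cdots)$ ensure that the ``proximal weight'' $L_{t+1} + \extrastep_{t+1}/\theta_{t+1}$ correctly dominates at iteration $t+1$.

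\textbf{Main obstacle.} The delicate part will be the bookkeeping in Step 2: showing that the sum of per-iteration regret terms---coming from an update whose weights $1/\theta_\tau$ change in time, whose proximal weight $L_{t+1}+\extrastep_{t+1}/\theta_{t+1}$ also changes, and which includes the non-smooth $\regularizer$ inside the argmin---is bounded by a single quadratic $\prox(x^*)$ term plus a telescoping strong-convexity slack large enough to absorb both $\tfrac{L_t\theta_t^2}{2}\norm{z_{t+1}-z_t}^2$ (from smoothness) and $\tfrac{\extrastep_t}{2}\norm{z_{t+1}-z_t}^2$ (from Fenchel--Young on the error). This is a variant of the Tseng/Xiao ``three-sequence'' manipulation, and once the weights are chosen consistently (which the statement of the lemma guarantees via $L_t = \lipgrad/\smoothparam_t$ and non-decreasing $\extrastep_t$), the remaining computation is mechanical.
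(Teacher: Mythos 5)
Your proposal follows essentially the same route as the paper's proof in Appendix~\ref{AppLemVarRDA}: an $L_t$-smoothness descent step combined with the algebraic identity $x_{t+1}-y_t = \theta_t(z_{t+1}-z_t)$, convexity of $\smoothedfunc[t]$ and $\regularizer$, first-order optimality of the dual-averaging iterate $z_{t+1}$ paired with strong convexity of $\prox$ (Bregman divergence), a Fenchel--Young split of $\langle\error_t, z_{t+1}-x^*\rangle$ whose quadratic piece is absorbed by the $\extrastep_t/\theta_t$ damping, and a telescoping argument driven by $(1-\theta_t)/\theta_t^2 = 1/\theta_{t-1}^2$ and the monotonicity $\smoothedfunc[t]\le\smoothedfunc[t-1]$. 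The only cosmetic difference is that you compare to $x^*$ in the convexity step directly, whereas the paper first bounds everything against a cumulative linearized loss $\ell_t$ and expands $\ell_t(x^*)$ via convexity at the very end; these are algebraically equivalent reorganizations of the same calculation.
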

\noindent See Appendix~\ref{AppLemVarRDA} for the proof of this claim.

\begin{lemma}
  \label{lemma:sequence-props}
  Let the sequence $\theta_t$ satisfy $\frac{1 - \theta_t}{\theta_t^2} =
  \frac{1}{\theta_{t-1}^2}$ and $\theta_0 = 1$. Then $\theta_t \le \frac{2}{t
    + 2}$, and $\sum_{\tau=0}^t \frac{1}{\theta_\tau} =
  \frac{1}{\theta_t^2}$.
\end{lemma}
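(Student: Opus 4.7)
The plan is to prove the two claims by induction, handling the sum identity first since the bound on $\theta_t$ will build on it.

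For the sum identity $\sum_{\tau=0}^t 1/\theta_\tau = 1/\theta_t^2$, I will induct on $t$. The base case $t=0$ is immediate from $\theta_0=1$. For the inductive step, I use the defining recursion in the form $1/\theta_{t-1}^2 = (1-\theta_t)/\theta_t^2$, which rearranges to
\begin{equation*}
\frac{1}{\theta_{t-1}^2} + \frac{1}{\theta_t} = \frac{1-\theta_t}{\theta_t^2} + \frac{\theta_t}{\theta_t^2} = \frac{1}{\theta_t^2},
\end{equation*}
so adding $1/\theta_t$ to both sides of the inductive hypothesis gives exactly what is needed.

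For the bound $\theta_t \le 2/(t+2)$, the key trick is to work with $1/\theta_t$ rather than $\theta_t$ directly and to exploit the just-proved identity. From $1/\theta_t^2 - 1/\theta_{t-1}^2 = 1/\theta_t$, factoring the left side as a difference of squares yields
\begin{equation*}
\left(\frac{1}{\theta_t} - \frac{1}{\theta_{t-1}}\right)\left(\frac{1}{\theta_t} + \frac{1}{\theta_{t-1}}\right) = \frac{1}{\theta_t},
\end{equation*}
so $1/\theta_t - 1/\theta_{t-1} = 1/(1 + \theta_t/\theta_{t-1})$. The main obstacle is then to show the sequence is non-increasing, i.e., $\theta_t \le \theta_{t-1}$, so that this ratio is at most one and the increment $1/\theta_t - 1/\theta_{t-1}$ is at least $1/2$. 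Monotonicity follows from the recursion: rewriting $\theta_{t-1}^2(1-\theta_t) = \theta_t^2$, if $\theta_t > \theta_{t-1}$ then $\theta_t^2 > \theta_{t-1}^2 \ge \theta_{t-1}^2(1-\theta_t)$, a contradiction (using $\theta_0 = 1$ and a routine check that $\theta_t \in (0,1]$ for all $t$).

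With monotonicity in hand, induction gives $1/\theta_t \ge 1/\theta_0 + t/2 = 1 + t/2 = (t+2)/2$, which is exactly $\theta_t \le 2/(t+2)$. No heavy machinery is required; the whole argument is a short telescoping computation, and the only delicate point is verifying the monotonicity $\theta_t \le \theta_{t-1}$ that licenses the $1/2$ lower bound on the increment.
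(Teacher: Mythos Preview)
Your proof is correct. The paper itself gives no detailed argument---it attributes the sum identity to Tseng and simply asserts that $\theta_t \le 2/(t+2)$ ``follows by a straightforward induction''---and your telescoping argument via $1/\theta_t - 1/\theta_{t-1} \ge 1/2$ (after verifying monotonicity) is exactly such an induction.
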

\noindent The second statement was proved by Tseng~\cite{Tseng08};
the first follows by a straightforward induction. \\


We now proceed with the proof.  Recalling $\smoothedfunc[t](x) = \E[f(x +
  \smoothparam_t Z)]$, let us verify that $\smoothedfunc[t](x) \leq
\smoothedfunc[t-1](x)$ for any $x$ and $t$ so we can apply
Lemma~\ref{lemma:variance-sequence-rda}.  Since $\smoothparam_t \leq
\smoothparam_{t-1}$, we may define a random variable $U \in \{0, 1\}$ such
that $\P(U = 1) = \frac{\smoothparam_t}{\smoothparam_{t-1}} \in [0,1]$. Then
\begin{align*}
   \smoothedfunc[t](x) \, = \, \E[f(x + \smoothparam_t Z)]
   & = \E\big[f \big(x + \smoothparam_{t-1} Z \E [U]
   \big)\big] \\
   & \le \P[U = 1] \; \E[f(x + \smoothparam_{t-1} Z)] + \P[U = 0] \;
   f(x),
\end{align*}
where the inequality follows from Jensen's inequality.  By a second
application of Jensen's inequality, we have $f(x) = f(x + \smoothparam_{t-1}
\E[Z]) \leq \E[f(x + \smoothparam_{t-1} Z)] = \smoothedfunc[t-1](x)$.  Combined
with the previous inequality, we conclude that $\smoothedfunc[t](x) \leq
\E[f(x + \smoothparam_{t-1} Z)] \; = \; \smoothedfunc[t-1](x)$ as claimed.
Consequently, we have verified that the function $\smoothedfunc[t]$ satisfies
the assumptions of Lemma~\ref{lemma:variance-sequence-rda} where $\nabla
\smoothedfunc[t]$ has Lipschitz parameter $L_t = \lipgrad / \smoothparam_t$
and error term $\error_t = \nabla \smoothedfunc[t](y_t) - g_t$.  We apply the
lemma momentarily.

Using Assumption~\ref{assumption:smoothness} that $f(x) \ge \E[f(x +
  \smoothparam_t Z)] - \lipobj \smoothparam_t = \smoothedfunc[t](x) - \lipobj
\smoothparam_t$ for all $x \in \xdomain$, Lemma~\ref{lemma:sequence-props}
implies
\begin{align}
  \lefteqn{\frac{1}{\theta_{T-1}^2} [f(x_T) + \regularizer(x_T)] -
    \frac{1}{\theta_{T-1}^2}[f(x^*) + \regularizer(x^*)]} \nonumber \\
  & \quad = \frac{1}{\theta_{T-1}^2} [f(x_T) + \regularizer(x_T)] -
  \sum_{t=0}^{T-1} \frac{1}{\theta_t}[f(x^*) + \regularizer(x^*)]
  \nonumber \\
  & \quad \le \frac{1}{\theta_{T-1}^2}[\smoothedfunc[t-1](x_T) +
    \regularizer(x_T)]
  - \sum_{t=0}^{T - 1} \frac{1}{\theta_t}[\smoothedfunc[t](x^*)
    + \regularizer(x^*)]
  + \sum_{t=0}^{T-1} \frac{\lipobj \smoothparam_t}{\theta_t}
  \nonumber,
\end{align}
which by the definition of $\smoothparam_t$ is in turn bounded by
\begin{equation}
  \frac{1}{\theta_{T-1}^2}[\smoothedfunc[t-1](x_T) +
  \regularizer(x_T)] - \sum_{t=0}^{T-1} \frac{1}{\theta_t}[\smoothedfunc[t](x^*) +
  \regularizer(x^*)] + T \lipobj u.
    \label{eqn:apply-variance-sequence}
\end{equation}
Now we
simply apply Lemma~\ref{lemma:variance-sequence-rda} to the
bound~\eqref{eqn:apply-variance-sequence}, which gives us
\begin{align}
\lefteqn{\frac{1}{\theta_{T-1}^2} \left[f(x_T) + \regularizer(x_T) -
    f(x^*) - \regularizer(x^*)\right]} \nonumber \\ & \le L_T
\prox(x^*) + \frac{\extrastep_T}{\theta_T} \prox(x^*) +
\sum_{t=0}^{T-1} \frac{1}{2 \theta_t \extrastep_t} \dnorm{\error_t}^2
+ \sum_{t=0}^{T-1} \frac{1}{\theta_t} \<\error_t, z_t - x^*\> + T
\lipobj u.
    \label{eqn:main-result-no-expectations}
\end{align}
The non-probabilistic bound~\eqref{eqn:main-result-no-expectations} is the key
to the remainder of this proof, as well as the starting point for the proof of
Theorem~\ref{theorem:main-theorem-highprob-no-horizon} in the next section.
What remains is to take expectations in the
bound~\eqref{eqn:main-result-no-expectations}.  

Recall the filtration of $\sigma$-fields $\mc{F}_t$ so that $x_t, y_t,
z_t \in \mc{F}_{t-1}$, that is, $\mc{F}_t$ contains the randomness in the
stochastic oracle to time $t$. Since $g_t$ is an unbiased estimator of
$\nabla \smoothedfunc[t](y_t)$ by construction, we have $\E[g_t \mid \mc{F}_{t-1}] =
\nabla \smoothedfunc[t](y_t)$ and
\begin{equation*}
 \E[\<\error_t, z_t - x^*\>] = \E \big[ \E[\<\error_t, z_t - x^*\>
     \mid \mc{F}_{t-1}] \big] \; = \; \E \big[ \<\E[\error_t \mid
     \mc{F}_{t-1}], z_t - x^*\> \big] \;= \; 0,
\end{equation*}
where we have used the fact that $z_t$ is measurable with
respect to $\mc{F}_{t-1}$.  Now, recall from
Lemma~\ref{lemma:sequence-props} that $\theta_t \le \frac{2}{2 + t}$
and that $(1 - \theta_t) / \theta_t^2 = 1 / \theta_{t-1}^2$. Thus
\begin{equation*}
    \frac{\theta_{t-1}^2}{\theta_t^2} = \frac{1}{1 - \theta_t} \le
    \frac{1}{1 - \frac{2}{2 + t}} = \frac{2 + t}{t} \le \frac{3}{2}
    \qquad \mbox{for $t \ge 4$.}
  \end{equation*}
 Furthermore, we have $\theta_{t+1} \le \theta_t$, so by multiplying
 both sides of our bound~\eqref{eqn:main-result-no-expectations} by
 $\theta_{T-1}^2$ and taking expectations over the random vectors
 $g_t$,
\begin{align*}
  \lefteqn{\E[f(x_T) + \regularizer(x_T)] - [f(x^*) +
      \regularizer(x^*)]} \\ & \le \theta_{T-1}^2 L_T \prox(x^*) +
  \theta_{T-1}\extrastep_T \prox(x^*) + \theta_{T-1}\sum_{t=0}^{T-1}
  \frac{1}{2 \extrastep_t} \E\dnorm{\error_t}^2 + \theta_{T-1}
  \sum_{t=0}^{T-1} \E[\<\error_t, z_t - x^*\>] + \theta_{T-1}^2 T
  \lipobj \smoothparam \\ & \le \frac{6 \lipgrad \prox(x^*)}{T
    \smoothparam} + \frac{2 \extrastep_T \prox(x^*)}{T} +
  \frac{1}{T} \sum_{t=0}^{T-1} \frac{1}{\extrastep_t} \E
  \dnorm{\error_t}^2 + \frac{4 \lipobj \smoothparam}{T},
\end{align*}
where we used the fact that $L_T = \lipgrad / \smoothparam_T = \lipgrad /
\theta_T \smoothparam$. This completes the proof of
Theorem~\ref{theorem:main-theorem-no-horizon}. \\

As promised, we conclude with a theorem using a fixed setting of
the smoothing parameter $\smoothparam_t$.
\begin{theorem}
  \label{theorem:main-theorem}
  Suppose that $\smoothparam_t \equiv \smoothparam$ for all $t$ and set
  $L_t \equiv \lipgrad / \smoothparam$.  With the remaining conditions
  as in Theorem~\ref{theorem:main-theorem-no-horizon}, then for any $x^*
  \in \xdomain$, we have
  \begin{equation*}
    \E[f(x_T) + \regularizer(x_T)] - [f(x^*) + \regularizer(x^*)] \le
    \frac{4 \lipgrad \prox(x^*)}{T^2 \smoothparam} + \frac{2
      \extrastep_T \prox(x^*)}{T} + \frac{1}{T} \sum_{t=0}^{T-1}
    \frac{1}{\extrastep_t} \E\big[\dnorm{\error_t}^2\big] + \lipobj
    \smoothparam,
  \end{equation*}
where $\error_t \defn \nabla \smoothedfunc(y_t) - g_t$.
\end{theorem}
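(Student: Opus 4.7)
The plan is to follow the same scaffolding as the proof of Theorem~\ref{theorem:main-theorem-no-horizon}, but specialize it to the constant-smoothing regime, where the telescoping across varying $\smoothedfunc[t]$ collapses and a few $\theta$-bounds become cleaner.

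First, I would apply Lemma~\ref{lemma:variance-sequence-rda} directly. The hypothesis $\smoothedfunc[t](x) \le \smoothedfunc[t-1](x)$ is now trivially an equality since $\smoothedfunc[t] \equiv \smoothedfunc$. With $L_t \equiv \lipgrad/\smoothparam$, the lemma gives
\begin{align*}
\frac{1}{\theta_{T-1}^2} [\smoothedfunc(x_T) + \regularizer(x_T)]
& \le \sum_{t=0}^{T-1} \frac{1}{\theta_t} [\smoothedfunc(x^*) + \regularizer(x^*)]
 + \Big( L_T + \frac{\extrastep_T}{\theta_T} \Big) \prox(x^*) \\
& \qquad + \sum_{t=0}^{T-1} \frac{1}{2 \theta_t \extrastep_t} \dnorm{\error_t}^2
 + \sum_{t=0}^{T-1} \frac{1}{\theta_t} \<\error_t, z_t - x^*\>.
\end{align*}
Using the identity $\sum_{t=0}^{T-1} 1/\theta_t = 1/\theta_{T-1}^2$ from Lemma~\ref{lemma:sequence-props}, I can move the $\smoothedfunc(x^*) + \regularizer(x^*)$ term to the left-hand side.

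Next I would pass from $\smoothedfunc$ back to $f$. Jensen's inequality and Assumption~\ref{assumption:smoothness} yield $f(x_T) \le \smoothedfunc(x_T)$ and $\smoothedfunc(x^*) \le f(x^*) + \lipobj \smoothparam$, so
$$
\frac{1}{\theta_{T-1}^2}\big[f(x_T) + \regularizer(x_T) - f(x^*) - \regularizer(x^*)\big]
\le \frac{\lipobj \smoothparam}{\theta_{T-1}^2}
+ \Big( L_T + \frac{\extrastep_T}{\theta_T} \Big) \prox(x^*)
+ \sum_{t=0}^{T-1} \frac{\dnorm{\error_t}^2}{2 \theta_t \extrastep_t}
+ \sum_{t=0}^{T-1} \frac{\<\error_t, z_t - x^*\>}{\theta_t}.
$$
This is the deterministic analogue of~\eqref{eqn:main-result-no-expectations}; note that here the Lipschitz-to-smoothed gap contributes $\lipobj \smoothparam$ as a \emph{constant} instead of summing to $T\lipobj\smoothparam$ as in the varying-$\smoothparam_t$ proof.

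Now I take expectations, which annihilates the inner-product term because $\E[\error_t \mid \mc{F}_{t-1}] = 0$ and $z_t \in \mc{F}_{t-1}$, and multiply through by $\theta_{T-1}^2$. What remains is to bound the $\theta$-factors: since $\theta_{T-1} \le 2/(T+1)$ we get $\theta_{T-1}^2 L_T \le 4\lipgrad/(T^2 \smoothparam)$ (using $L_T = \lipgrad/\smoothparam$), which yields the first term of the claimed bound; the identity $\theta_{T-1}^2/\theta_T = \theta_T/(1 - \theta_T) \le 2/T$ produces the $2\extrastep_T \prox(x^*)/T$ term; and the variance term is handled via $\theta_{T-1}^2/(2\theta_t\extrastep_t) \le \theta_{T-1}/(2\extrastep_t) \le 1/(T\extrastep_t)$, using that $\theta_t$ is decreasing and $\theta_{T-1} \le 2/(T+1)$. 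The surviving $\lipobj \smoothparam$ needs no further scaling. There is no genuine obstacle here; the only subtle point, compared to Theorem~\ref{theorem:main-theorem-no-horizon}, is recognizing that fixing $\smoothparam_t \equiv \smoothparam$ eliminates the telescoping error from the smoothed approximation and instead leaves a single $\lipobj\smoothparam/\theta_{T-1}^2$ term that scales favorably by the final $\theta_{T-1}^2$ factor, explaining both the improved $1/T^2$ rate in the Lipschitz term and the unscaled $\lipobj\smoothparam$ in the last term.
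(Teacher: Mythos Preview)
Your proposal is correct and follows essentially the same route as the paper: the paper's proof simply notes that, with $\smoothparam_t\equiv\smoothparam$, the bound~\eqref{eqn:main-result-no-expectations} holds with the last term replaced by $\lipobj\smoothparam/\theta_{T-1}^2$ (via Lemma~\ref{lemma:sequence-props}), after which the argument of Theorem~\ref{theorem:main-theorem-no-horizon} goes through unchanged. Your write-up spells out explicitly the $\theta$-factor bounds (in particular $\theta_{T-1}^2/\theta_T=\theta_T/(1-\theta_T)\le 2/T$ and $\theta_{T-1}^2 L_T\le 4\lipgrad/(T^2\smoothparam)$) that the paper leaves implicit, but the structure is identical.
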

\begin{proof}
  The proof is brief. If we fix $\smoothparam_t \equiv \smoothparam$ for all
  $t$, then the bound~\eqref{eqn:main-result-no-expectations} holds with
  the last term $T \lipobj
  \smoothparam$ replaced by $\theta_{T-1}^2 \lipobj \smoothparam$, which we see
  by invoking Lemma~\ref{lemma:sequence-props}. The remainder of the proof
  follows unchanged, with $L_t \equiv \lipgrad$ for all $t$.
\end{proof}
\noindent
It is clear that by setting $u \propto 1/T$, the rates achieved by
Theorem~\ref{theorem:main-theorem-no-horizon} and
Theorem~\ref{theorem:main-theorem} are identical to constant factors.

\subsection{Proof of Theorem~\ref{theorem:main-theorem-highprob-no-horizon}}

An examination of the proof of Theorem~\ref{theorem:main-theorem-no-horizon}
shows that to control the probability of deviation from the expected
convergence rate, we need to control two terms: the squared error sequence
$\sum_{t=0}^{T-1} \frac{1}{2\extrastep_t} \dnorm{\error_t}^2$ and the sequence
$\sum_{t=0}^{T-1} \frac{1}{\theta_t} \<\error_t, z_t - x^*\>$. The next two
lemmas handle these terms.
\begin{lemma}
  \label{lemma:mg-error-terms}
  Let $\xdomain$ be compact with $\norm{x - x^*} \le \radius$ for all
  $x \in \xdomain$. Under Assumption~\ref{assumption:subexp-errors}, we have
  \begin{equation}
\label{EqnBanffErrA}
    \P \biggr[ \theta_{T-1}^2 \sum_{t=0}^{T-1} \frac{1}{\theta_t}
      \<\error_t, z_t - x^*\> \ge \epsilon \biggr] \le \exp
    \bigg(-\frac{T \epsilon^2}{\radius^2 \stddev^2} \bigg).
  \end{equation}
  Consequently, with probability at least $1 - \delta$,
  \begin{equation}
\label{EqnBanffErrB}
    \theta_{T-1}^2 \sum_{t=0}^{T-1} \frac{1}{\theta_t} \<\error_t, z_t - x^*\>
    \le \radius \stddev \sqrt{\frac{\log \frac{1}{\delta}}{T}}.
  \end{equation}
\end{lemma}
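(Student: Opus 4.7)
}
Write $w_t := \theta_{T-1}^2/\theta_t$ and $\xi_t := \<\error_t, z_t - x^*\>$, so the random variable of interest is $S_T := \sum_{t=0}^{T-1} w_t \xi_t$. The plan is to recognize $(\xi_t, \mc{F}_t)$ as a martingale difference sequence with conditionally sub-Gaussian increments and then run a Chernoff argument. First, $z_t \in \mc{F}_{t-1}$, and by construction $\E[g_t \mid \mc{F}_{t-1}] = \nabla \smoothedfunc[t](y_t)$, so $\E[\error_t \mid \mc{F}_{t-1}] = 0$ and hence $\E[\xi_t \mid \mc{F}_{t-1}] = 0$. Second, the generalized Cauchy--Schwarz inequality together with $\norm{z_t - x^*} \le \radius$ gives $|\xi_t| \le \radius \dnorm{\error_t}$, so Assumption~\ref{assumption:subexp-errors} upgrades to
\begin{equation*}
  \E\bigl[\exp(\xi_t^2/(\radius\stddev)^2) \,\bigm|\, \mc{F}_{t-1}\bigr] \le e.
\end{equation*}

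From here I would invoke the standard implication (stated in Appendix~\ref{AppTail}) that a mean-zero random variable with such an Orlicz moment is sub-Gaussian in the MGF sense, yielding $\E[\exp(\lambda w_t\xi_t) \mid \mc{F}_{t-1}] \le \exp(c_0 \lambda^2 w_t^2 \radius^2 \stddev^2)$ for some absolute constant $c_0$ (which can be absorbed into the definition of $\stddev^2$ in the statement). Iterating this bound along the filtration gives $\E[\exp(\lambda S_T)] \le \exp\bigl(c_0 \lambda^2 \radius^2 \stddev^2 \sum_{t=0}^{T-1} w_t^2\bigr)$, and a Chernoff optimization in $\lambda$ then produces
\begin{equation*}
  \P[S_T \ge \epsilon] \le \exp\!\left(-\frac{\epsilon^2}{4 c_0 \radius^2 \stddev^2 \sum_{t=0}^{T-1} w_t^2}\right).
\end{equation*}

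The remaining task is to show $\sum_{t=0}^{T-1} w_t^2 \lesssim 1/T$, which matches the lemma's form~\eqref{EqnBanffErrA}. By Lemma~\ref{lemma:sequence-props}, $\sum_{\tau=0}^{T-1} 1/\theta_\tau = 1/\theta_{T-1}^2$ and $\theta_t \le 2/(t+2)$. Cauchy--Schwarz gives $\bigl(\sum_{t=0}^{T-1} 1/\theta_t\bigr)^2 \le T \sum_{t=0}^{T-1} 1/\theta_t^2$, so
\begin{equation*}
  \sum_{t=0}^{T-1} w_t^2 \;=\; \theta_{T-1}^4 \sum_{t=0}^{T-1} \frac{1}{\theta_t^2} \;\le\; \theta_{T-1}^4 \cdot T \cdot \frac{1}{\theta_{T-1}^4} \cdot \frac{1}{\text{(a crude lower bound)}},
\end{equation*}
and a direct calculation using $1/\theta_t \le t+1$ caps the sum by $O(1/T)$. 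Plugging this into the Chernoff bound delivers~\eqref{EqnBanffErrA}, and inverting with $\epsilon = \radius \stddev \sqrt{\log(1/\delta)/T}$ yields the high-probability consequence~\eqref{EqnBanffErrB}.

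The main obstacle is the passage from the squared-exponential moment condition of Assumption~\ref{assumption:subexp-errors} to a clean conditional MGF bound on the scalar increments $w_t \xi_t$: this is where one needs a non-trivial (but textbook) sub-Gaussian equivalence lemma, and also where the absolute constant in the final exponent comes from. Once that step is in hand, the martingale tensorization, Chernoff optimization, and the bookkeeping on the $\theta_t$-weights are all routine.
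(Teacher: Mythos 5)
Your overall strategy matches the paper's proof exactly: recognize $\<\error_t, z_t - x^*\>$ as a martingale difference sequence, use $|\<\error_t, z_t-x^*\>| \le \radius\dnorm{\error_t}$ to transfer the Orlicz bound from Assumption~\ref{assumption:subexp-errors}, convert it to a conditional MGF bound (Lemma~\ref{lemma:subexp-squared-implies-subg}), tensorize via Azuma--Hoeffding, and then absorb the $\theta_t$ weights. So the skeleton is sound and essentially the one the paper uses.

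There is, however, a genuine muddle in your bookkeeping on $\sum_t w_t^2$. You invoke Cauchy--Schwarz in the form $(\sum 1/\theta_t)^2 \le T \sum 1/\theta_t^2$, but this gives a \emph{lower} bound on $\sum 1/\theta_t^2$, not the upper bound you need; and the displayed chain with the placeholder ``(a crude lower bound)'' in a denominator does not parse as a valid inequality. The clean route---and the one the paper takes---is simply to use $\theta_{T-1} \le \theta_t$ for $t \le T-1$, so that $\theta_{T-1}^2/\theta_t^2 \le 1$, giving
\begin{equation*}
  \sum_{t=0}^{T-1} w_t^2 \;=\; \theta_{T-1}^2 \sum_{t=0}^{T-1} \frac{\theta_{T-1}^2}{\theta_t^2} \;\le\; \theta_{T-1}^2\, T \;\le\; \frac{4T}{(T+1)^2} \;\le\; \frac{4}{T},
\end{equation*}
where the last two steps use $\theta_{T-1} \le 2/(T+1)$ from Lemma~\ref{lemma:sequence-props}. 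Your fallback, ``a direct calculation using $1/\theta_t \le t+1$,'' does also close the gap, but that lower bound $\theta_t \ge 1/(t+1)$ is not in Lemma~\ref{lemma:sequence-props} and would need a separate (if routine) induction; it is a detour compared with the monotonicity of $\theta_t$, which the paper already has in hand. Finally, on the constant: you say the constant $c_0$ ``can be absorbed into the definition of $\stddev^2$,'' but $\stddev$ is pinned down by Assumption~\ref{assumption:subexp-errors}, so this is not quite free---though the paper itself is loose on precisely this constant, so you are in good company.
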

\begin{lemma}
  \label{lemma:opt-error2-terms}
  In the notation of
  Theorem~\ref{theorem:main-theorem-highprob-no-horizon} and under
  Assumption~\ref{assumption:subexp-errors}, we have
  \begin{equation}
   \label{EqnBanffTwoErrA}
    \log \P\biggr[\sum_{t=0}^{T-1} \frac{1}{2\extrastep_t}
    \dnorm{\error_t}^2 \ge \sum_{t=0}^{T-1} \frac{1}{2\extrastep_t}
    \E[\dnorm{\error_t}^2] + \epsilon\biggr] \leq
    \max\bigg\{-\frac{\epsilon^2}{ 32 e \stddev^4 \sum_{t=0}^{T-1}
      \frac{1}{\extrastep_t^2}}, -\frac{\extrastep_0}{4\stddev^2}
    \epsilon \bigg\}.
  \end{equation}
  Consequently, with probability at least $1 - \delta$,
  \begin{equation}
   \label{EqnBanffTwoErrB}
    \sum_{t=0}^{T-1} \frac{1}{2\extrastep_t} \dnorm{\error_t}^2
    \le \sum_{t=0}^{T-1} \frac{1}{2\extrastep_t} \E[\dnorm{\error_t}^2]
    + \frac{4 \stddev^2}{\extrastep} \max\bigg\{\log\frac{1}{\delta},
   \sqrt{2 e (\log T + 1) \log \frac{1}{\delta}}\bigg\}.
  \end{equation}
\end{lemma}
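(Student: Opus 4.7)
}

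The plan is to treat $\sum_t \frac{1}{2\extrastep_t} \dnorm{\error_t}^2$ as a sum of martingale differences and apply a Bernstein-type (sub-exponential) Chernoff argument. Concretely, let
\begin{equation*}
  X_t \defn \frac{1}{2\extrastep_t}\left(\dnorm{\error_t}^2 -
  \E[\dnorm{\error_t}^2 \mid \mc{F}_{t-1}]\right),
  \qquad S_T \defn \sum_{t=0}^{T-1} X_t.
\end{equation*}
Since $\extrastep_t \in \mc{F}_{t-1}$ and by construction the centering uses the conditional mean, $\{X_t\}$ is a martingale difference sequence with respect to $\{\mc{F}_t\}$. Bounding the event of interest is then equivalent to bounding $\P(S_T \ge \epsilon)$.

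The first key step is to derive a conditional MGF bound for $X_t$. Assumption~\ref{assumption:subexp-errors} says $\E[\exp(\dnorm{\error_t}^2/\stddev^2)\mid\mc{F}_{t-1}] \le e$, so $\dnorm{\error_t}^2$ is, conditionally, a non-negative sub-exponential random variable with parameter $\stddev^2$. Expanding $\exp(\lambda \cdot)$ in a Taylor series and using moment bounds of the form $\E[\dnorm{\error_t}^{2k}\mid\mc{F}_{t-1}] \le C\, k!\, \stddev^{2k}$ (which follow from the sub-exponential MGF bound) yields, for $|\lambda| \le \extrastep_t/(2\stddev^2)$,
\begin{equation*}
  \E\!\left[\exp(\lambda X_t) \mid \mc{F}_{t-1}\right]
  \le \exp\!\left(\frac{4 e\, \lambda^2 \stddev^4}{\extrastep_t^2}\right).
\end{equation*}
Getting the precise constant $4e$ is the main technical obstacle: it requires carefully tracking the constants through the centering step and the Taylor expansion, and is where the factor $32e$ in \eqref{EqnBanffTwoErrA} ultimately comes from.

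With this in hand, the remainder is a standard Chernoff-plus-iteration argument. Since $\extrastep_t = \extrastep \sqrt{t+1}$ is non-decreasing, the range $|\lambda| \le \extrastep_0/(2\stddev^2)$ is uniform in $t$. Iterating tower expectations gives, for every $\lambda \in [0, \extrastep_0/(2\stddev^2)]$,
\begin{equation*}
  \log \E[\exp(\lambda S_T)]
  \le 4 e\, \lambda^2 \stddev^4 \sum_{t=0}^{T-1} \frac{1}{\extrastep_t^2}.
\end{equation*}
Applying the Markov bound and optimizing over the admissible range of $\lambda$ produces two regimes. If the unconstrained minimizer $\lambda^\star = \epsilon/(8 e\, \stddev^4 \sum_t \extrastep_t^{-2})$ lies in $[0, \extrastep_0/(2\stddev^2)]$, we obtain the Gaussian-type exponent $-\epsilon^2/(32 e\, \stddev^4 \sum_t \extrastep_t^{-2})$; otherwise the boundary choice $\lambda = \extrastep_0/(2\stddev^2)$ yields the exponential-type bound $-\extrastep_0 \epsilon/(4\stddev^2)$. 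Taking the worse of the two gives exactly \eqref{EqnBanffTwoErrA}.

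For the consequence~\eqref{EqnBanffTwoErrB}, I substitute $\extrastep_t = \extrastep \sqrt{t+1}$, use $\sum_{t=0}^{T-1} \extrastep_t^{-2} = \frac{1}{\extrastep^2}\sum_{t=1}^{T} \frac{1}{t} \le \frac{1+\log T}{\extrastep^2}$, set each branch of the right-hand side of \eqref{EqnBanffTwoErrA} equal to $\delta$, and invert for $\epsilon$. The Gaussian branch gives $\epsilon \le 4\stddev^2 \sqrt{2e(1+\log T)\log(1/\delta)}/\extrastep$, while the exponential branch gives $\epsilon \le 4\stddev^2 \log(1/\delta)/\extrastep$; taking the maximum and adding back the expectation term $\sum_t \E[\dnorm{\error_t}^2]/(2\extrastep_t)$ yields exactly the stated bound \eqref{EqnBanffTwoErrB}.
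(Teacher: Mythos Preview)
Your proposal is correct and follows essentially the same strategy as the paper: define the centered martingale differences $X_t=\frac{1}{2\extrastep_t}(\dnorm{\error_t}^2-\E[\dnorm{\error_t}^2\mid\mc{F}_{t-1}])$, show each is conditionally sub-exponential, apply a Bernstein-type bound to $\sum_t X_t$, and invert. The only variation is in how the conditional MGF bound on $X_t$ is obtained---you go directly through moment estimates and a Taylor expansion, whereas the paper first extracts a two-sided tail bound $\P(|\dnorm{\error_t}^2-\E\dnorm{\error_t}^2|>s)\le 2e\,\exp(-s/(2\stddev^2))$ from Assumption~\ref{assumption:subexp-errors} and then converts it to an MGF bound via Lemma~\ref{lemma:concentration-to-subexp}; both routes are standard. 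One bookkeeping remark: with your stated constant $4e$ in the conditional MGF, the Chernoff optimization gives $16e$ rather than $32e$ in the Gaussian branch of~\eqref{EqnBanffTwoErrA}; matching the paper's constant requires $8e$, which is exactly what the paper records as its intermediate claim.
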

\noindent See Appendices~\ref{AppLemMGError} and~\ref{AppLemOpt2},
respectively, for the proofs of these two lemmas. \\

Equipped with these lemmas, we now prove
Theorem~\ref{theorem:main-theorem-highprob-no-horizon}.  Let us recall
the deterministic bound~\eqref{eqn:main-result-no-expectations} from
the proof of Theorem~\ref{theorem:main-theorem-no-horizon}:
\begin{align*}
    \lefteqn{\frac{1}{\theta_{T-1}^2}[f(x_T) + \regularizer(x_T) -
        f(x^*) - \regularizer(x^*)]} \\ & \le L_T \prox(x^*) +
    \frac{\extrastep_T}{\theta_T} \prox(x^*) + \sum_{t=0}^{T-1}
    \frac{1}{2 \theta_t \extrastep_t} \dnorm{\error_t}^2 +
    \sum_{t=0}^{T-1} \frac{1}{\theta_t} \<\error_t, z_t - x^*\> + T
    \lipobj u.
  \end{align*}
Noting that $\theta_{T-1} \le \theta_t$ for $t \in \{0, \ldots,
T-1\}$, Lemma~\ref{lemma:opt-error2-terms} implies that with
probability at least $1 - \delta$
\begin{equation*}
\theta_{T-1} \sum_{t=0}^{T-1} \frac{1}{2 \theta_t \extrastep_t}
\dnorm{\error_t}^2 \le \sum_{t=0}^{T-1} \frac{1}{2 \extrastep_t}
\E[\dnorm{\error_t}^2] + \frac{4 \stddev^2}{\extrastep}
\max\left\{\log(1/\delta), \sqrt{2e (\log T + 1)
  \log(1/\delta)}\right\}.
\end{equation*}
Applying Lemma~\ref{lemma:mg-error-terms}, we see that with
probability at least $1 - \delta$
\begin{equation*}
\theta_{T-1}^2 \sum_{t=0}^{T-1} \frac{1}{\theta_t} \<\error_t, z_t -
x^*\> \le \frac{\radius \stddev
  \sqrt{\log\frac{1}{\delta}}}{\sqrt{T}}.
\end{equation*}


The terms remaining to control are deterministic, and were bounded
previously in the proof of
Theorem~\ref{theorem:main-theorem-no-horizon}; in particular, we have
\begin{equation*}
\theta_{T-1}^2 L_T \le \frac{6 \lipgrad}{T \smoothparam}, ~~~
\frac{\theta_{T-1}^2 \extrastep_T}{\theta_T} \le \frac{4
  \extrastep_T}{T + 1}, ~~~ \mbox{and} ~~~ \theta_{T-1}^2 T \lipobj
\smoothparam \le \frac{4 \lipobj \smoothparam}{T + 1}.
\end{equation*}
Combining the above bounds completes the proof.

\section{Discussion}

In this paper, we have analyzed smoothing strategies for stochastic non-smooth
optimization. We have developed methods that are provably optimal in the
stochastic oracle model of optimization complexity, and given---to our
knowledge---the first variance reduction techniques for non-smooth stochastic
optimization. We think that at least two obvious questions remain. The first,
to which we have alluded earlier, is whether the randomized smoothing is
necessary to achieve such optimal rates of convergence. The second question is
whether dimension-independent smoothing techniques are possible, that is,
whether the $d$-dependent factors in the bounds in
Corollaries~\ref{corollary:variance-rate-ltwo}--\ref{corollary:highprob} are
necessary. Answering this question would require study of different smoothing
distributions, as the dimension dependence for our choices of $\smoothingdist$
is tight. We have outlined several applications for which smoothing techniques
give provable improvement over standard methods. Our experiments also show
qualitatively good agreement with the theoretical predictions we have
developed.

\subsection*{Acknowledgments}

We thank Alekh Agarwal and Ohad Shamir for discussions about variance
reduction in stochastic optimization and Steven Evans for some useful pointers
to smoothing operators. We also acknowledge the anonymous reviewers for their
helpful feedback and thoughtful comments. JCD was supported by a fellowship
from the National Defense Science and Engineering Graduate Fellowship Program
(NDSEG). PLB gratefully acknowledges the support of the NSF under award
DMS-0830410. In addition, MJW and JCD were partially supported by grant number
AFOSR-09NL184.

\appendix

\section{Proof of Lemma~\ref{lemma:lipschitz-sample-bound}}
\label{appendix:proof-of-lipschitz-concentration}

The proof of this lemma requires several auxiliary results on
sub-Gaussian and sub-exponential random variables, which we collect
and prove in Appendix~\ref{AppTail}. \\

Define $X_i = \nabla \smoothedfunc(x_t) - g_{i,t}$ and $S_m =
\sum_{i=1}^m X_i$, so  $\minv S_m = \nabla \smoothedfunc(x_t) -
\frac{1}{m} \sum_{i=1}^m g_{i,t}$. Note that conditioned on
$\mc{F}_{t-1}$, the $X_i$ are independent, so that for $L = 2
\lipobj$, we have $\dnorm{X_i} \le L$, and we can apply
Lemma~\ref{lemma:subgaussian-bounded-vectors} from
Appendix~\ref{AppTail}. In particular, we see that $\dnorm{\minv S_m}
- \E \dnorm{\minv S_m}$ is sub-Gaussian with parameter at most $4 L^2
/ m$. Consequently, we can apply Lemma~\ref{lemma:second-order-subg}
from Appendix~\ref{AppTail} so as to obtain
\begin{equation*}
\E \exp\left(\frac{s m \dnorm{\minv S_m}^2}{8 L^2}\right) \le
\frac{1}{\sqrt{1 - s}} \exp\left(\frac{m (\E\dnorm{\minv S_m})^2}{ 8
  L^2} \; \frac{s}{1 - s}\right).
\end{equation*}
Moreover, we can weaken the sub-Gaussian parameter $4 L^2 / m$ with
$\max\{\E \dnorm{\minv S_m}^2, 4L^2/m\}$:
\begin{equation*}
 \E \left[\exp(\lambda(\dnorm{S_m / m} - \E \dnorm{S_m / m}))\right]
 \le \exp\left(\frac{\lambda^2 \max\{4 L^2 / m, \E \dnorm{\minv
     S_m}^2\}}{2} \right).
\end{equation*} 
Recalling that for any random variable $X$, Jensen's inequality gives
$(\E X)^2 \le \E X^2$, we have
\begin{align*}
  \E \exp\left(\frac{s \dnorm{\minv S_m}^2}{ 2 \max\{\E \dnorm{\minv
      S_m}^2, \frac{4}{m} L^2\}}\right) & \le \frac{1}{\sqrt{1 - s}}
  \exp\left(\frac{\E \dnorm{\minv S_m}^2}{2 \max\{\E \dnorm{\minv
      S_m}^2, \frac{4}{m} L^2\}} \; \frac{s}{1 - s}\right) \\ & \le
  \frac{1}{\sqrt{1 - s}} \exp\left(\half \cdot \frac{s}{1 - s}\right).
\end{align*}
By taking $s = \half$, we get
\begin{equation*}
  \frac{1}{\sqrt{1 - s}} \exp\left(\half \; \frac{s}{1 - s}\right) =
  \sqrt{2} \exp\left(\half\right) \le \exp(1).
\end{equation*}
Replacing $L$ with $2 \lipobj$ completes the proof.


\section{Proof of Lemma~\ref{lemma:variance-sequence-rda}}
\label{AppLemVarRDA}

Define the linearized version of the cumulative objective
\begin{equation}
  \label{eqn:cumulative-linear-loss}
  \ell_t(z) \defeq
  \sum_{\tau = 0}^t \frac{1}{\theta_\tau} [\smoothedfunc[\tau](y_\tau) + \<g_\tau, z
    - y_\tau\> + \regularizer(z)],
\end{equation}
and let $\ell_{-1}(z)$ denote the indicator function of the set $\xdomain$. For
conciseness, we adopt the shorthand
\begin{align*}
  \stepsize_t^{-1} & =  L_t + \extrastep_t / \theta_t
  \qquad \mbox{and} \qquad
  \phi_t(x) = \smoothedfunc[t](x) + \regularizer(x).
\end{align*}
By the smoothness of $\smoothedfunc[t]$, we have
\begin{align*}
  \underbrace{\smoothedfunc[t](x_{t+1}) + \regularizer(x_{t+1})}_{\phi_t(x_{t+1})} &
  \leq \smoothedfunc[t](y_t) + \<\nabla \smoothedfunc[t](y_t), x_{t+1} - y_t\> + \frac{L_t}{2}
  \norm{x_{t+1} - y_t}^2 + \regularizer(x_{t+1}).
\end{align*}
From the definition~\eqref{eqn:y-update}--\eqref{eqn:x-update} of the triple
$(x_t, y_t, z_t)$, we obtain
\begin{align*}
  \phi_t(x_{t+1}) & \leq \smoothedfunc[t](y_t) + \<\nabla \smoothedfunc[t](y_t), \theta_t z_{t+1} +
  (1 - \theta_t) x_t\> + \frac{L_t}{2}\norm{\theta_t z_{t + 1} -
    \theta_t z_t}^2 + \regularizer(\theta_t z_{t+1} + (1 - \theta_t)
  x_t).
\end{align*}
Finally, by convexity of the regularizer $\regularizer$, we conclude 
\begin{align}
  \phi_t(x_{t+1}) & \leq 
  \theta_t \left[\smoothedfunc[t](y_t) + \<\nabla \smoothedfunc[t](y_t), z_{t+1} - y_t\> +
    \regularizer(z_{t+1}) + \frac{L_t \theta_t}{2} \norm{z_{t+1} -
      z_t}^2 \right] \nonumber \\
  & \qquad\qquad\qquad
  ~ + (1 - \theta_t)[\smoothedfunc[t](y_t) + \<\nabla \smoothedfunc[t](y_t), x_t - y_t\> +
    \regularizer(x_t)].
  \label{EqnBanffPreLunch}
\end{align}

By the strong convexity of $\prox$, it is clear that we have the lower bound
\begin{align}
  \label{EqnDivLower}
  \divergence(x,y) \; = \; \prox(x) - \prox(y)
  - \<\nabla \prox(y), x - y\> \; \geq \; \half \, \norm{x-y}^2.
\end{align}
On the other hand, by the convexity of $\smoothedfunc[t]$, we have
\begin{align}
  \label{EqnFbound}
  \smoothedfunc[t](y_t) + \<\nabla \smoothedfunc[t](y_t), x_t - y_t\>  & \leq \smoothedfunc[t](x_t).
\end{align}
Substituting inequalities~\eqref{EqnDivLower} and~\eqref{EqnFbound}
into the upper bound~\eqref{EqnBanffPreLunch} and simplifying yields
\begin{align*}
  \phi_t(x_{t+1}) & \leq \theta_t\left[\smoothedfunc[t](y_t) + \<\nabla \smoothedfunc[t](y_t),
  z_{t+1} - y_t\> + \regularizer(z_{t+1}) + L_t\theta_t
  \divergence(z_{t+1}, z_t)\right] + (1 - \theta_t) [\smoothedfunc[t](x_t) +
  \regularizer(x_t)].
\end{align*}

We now re-write this upper bound in terms of the error
$\error_t = \nabla \smoothedfunc[t](y_t) - g_t$.  In particular,
\begin{align}
  \lefteqn{\phi_t(x_{t+1})} \nonumber \\
  & \le \theta_t \left[\smoothedfunc[t](y_t) + \<g_t, z_{t+1} - y_t\> +
    \regularizer(z_{t+1}) + L_t \theta_t \divergence(z_{t+1},
    z_t)\right] \nonumber \\
  & \qquad\quad~ + (1 - \theta_t) [\smoothedfunc[t](x_t) + \regularizer(x_t)] +
  \theta_t \<\error_t, z_{t + 1} - y_t\> \nonumber \\ & = \theta_t^2
  \left[\ell_t(z_{t+1}) - \ell_{t-1}(z_{t+1}) + L_t \divergence(z_{t+1},
    z_t)\right] + (1 - \theta_t) [\smoothedfunc[t](x_t) + \regularizer(x_t)] +
  \theta_t \<\error_t, z_{t+1} - y_t\>.
  \label{eqn:intermediate-rda-bound-adaptive}
\end{align}

Using the fact that $z_t$ minimizes $\ell_{t-1}(x) +
\frac{1}{\stepsize_t} \prox(x)$, the first order conditions for
optimality imply that for all $g \in \partial \ell_{t-1}(z_t)$, we
have $\<g + \frac{1}{\stepsize_t} \nabla \prox(z_t), x - z_t\> \ge
0$. Thus, first-order convexity gives
\begin{align*}
  \ell_{t-1}(x) - \ell_{t-1}(z_t) & \ge \<g, x - z_t \> \ge
  -\frac{1}{\stepsize_t} \<\nabla \prox(z_t), x - z_t\> =
  \frac{1}{\stepsize_t} \prox(z_t) - \frac{1}{\stepsize} \prox(x) +
  \frac{1}{\stepsize_t} \divergence(x, z_t).
\end{align*}
Adding $\ell_t(z_{t + 1})$ to both sides of the above and
substituting $x = z_{t+1}$, we conclude
  \begin{equation*}
    \ell_t(z_{t+1}) - \ell_{t-1}(z_{t+1})
    \le \ell_t(z_{t+1}) - \ell_{t-1}(z_t) - \frac{1}{\stepsize_t} \prox(z_t)
    + \frac{1}{\stepsize_t} \prox(z_{t+1}) - \frac{1}{\stepsize_t}
    \divergence(z_{t+1}, z_t).
  \end{equation*}
Combining this inequality with the
bound~\eqref{eqn:intermediate-rda-bound-adaptive} and using the
definition $\stepsize_t^{-1} = L_t + \extrastep_t / \theta_t$, we find
\begin{align*}
  \smoothedfunc[t](x_{t+1}) + \regularizer(x_{t+1}) & \le
  \theta_t^2\left[\ell_t(z_{t+1}) - \ell_t(z_t) - \frac{1}{\stepsize_t}
  \prox(z_t) + \frac{1}{\stepsize_t} \prox(z_{t+1}) -
  \frac{\extrastep_t}{\theta_t} \divergence(z_{t+1}, z_t)\right] \\
  & \qquad ~ + (1 - \theta_t)[\smoothedfunc[t](x_t) + \regularizer(x_t)] + \theta_t
  \<\error_t, z_{t+1} - y_t\> \\
  & \le \theta_t^2\left[\ell_t(z_{t+1}) -
    \ell_t(z_t) - \frac{1}{\stepsize_t} \prox(z_t) +
    \frac{1}{\stepsize_{t+1}} \prox(z_{t+1}) -
    \frac{\extrastep_t}{\theta_t} \divergence(z_{t+1}, z_t)\right] \\
  & \qquad ~ + (1 - \theta_t)[\smoothedfunc[t](x_t) + \regularizer(x_t)] + \theta_t
  \<\error_t, z_{t+1} - y_t\>
\end{align*}
since $\stepsize_t^{-1}$ is non-decreasing.  We now divide both
sides by $\theta_t^2$, and unwrap the recursion.  Recall that $(1 -
\theta_t) / \theta_t^2 = 1/\theta_{t-1}^2$ and $\smoothedfunc[t] \le \smoothedfunc[t-1]$ by
construction, so we obtain
\begin{align*}
  \frac{1}{\theta_t^2}[\smoothedfunc[t](x_{t+1}) + \regularizer(x_{t+1})]
  & \le \frac{1 - \theta_t}{\theta_t^2} [\smoothedfunc[t](x_t) + \regularizer(x_t)]
  + \ell_t(z_{t+1}) - \ell_t(z_t) - \frac{1}{\stepsize_t} \prox(z_t)
  + \frac{1}{\stepsize_{t+1}} \prox(z_{t+1}) \\
  & \qquad ~ - \frac{\extrastep_t}{\theta_t}
  \divergence(z_{t+1}, z_t) + \frac{1}{\theta_t} \<e_t, z_{t + 1} - y_t\> \\
  & \stackrel{(i)}{=} \frac{1}{\theta_{t-1}^2} [\smoothedfunc[t](x_t) + \regularizer(x_t)]
  + \ell_t(z_{t+1}) - \ell_t(z_t) - \frac{1}{\stepsize_t} \prox(z_t)
  + \frac{1}{\stepsize_{t+1}} \prox(z_{t+1}) \\
  & \qquad ~ - \frac{\extrastep_t}{\theta_t}
  \divergence(z_{t+1}, z_t) + \frac{1}{\theta_t} \<e_t, z_{t + 1} - y_t\> \\
  & \stackrel{(ii)}{\le}
  \frac{1}{\theta_{t-1}^2} [\smoothedfunc[t-1](x_t) + \regularizer(x_t)]
  + \ell_t(z_{t+1}) - \ell_t(z_t) - \frac{1}{\stepsize_t} \prox(z_t)
  + \frac{1}{\stepsize_{t+1}} \prox(z_{t+1}) \\
  & \qquad ~ - \frac{\extrastep_t}{\theta_t}
  \divergence(z_{t+1}, z_t) + \frac{1}{\theta_t} \<e_t, z_{t + 1} - y_t\>.
\end{align*}
The equality (i) follows since $(1 - \theta_t) / \theta_t^2 = 1 /
\theta_{t-1}^2$, while the inequality (ii) is a consequence of the fact that
$\smoothedfunc[t] \le \smoothedfunc[t-1]$. By applying the three steps above successively to
$[\smoothedfunc[t-1](x_t) + \regularizer(x_t)] / \theta_{t-1}^2$, then to
$[\smoothedfunc[t-2](x_{t-1}) + \regularizer(x_{t-1})] / \theta_{t-2}^2$, and so on until
$t = 0$, we find
\begin{align*}
  \frac{1}{\theta_t^2}[\smoothedfunc[t](x_{t+1}) + \regularizer(x_{t+1})]
  & \le \frac{1 - \theta_0}{\theta_0^2} [\smoothedfunc[0](x_0)
    + \regularizer(x_0)]
  + \ell_t(z_{t+1}) + \frac{1}{\stepsize_{t+1}} \prox(z_{t+1})
  - \frac{1}{\stepsize_0} \prox(z_0) \\
  & \qquad ~ -\sum_{\tau=0}^t \frac{\extrastep_\tau}{\theta_\tau}
  \divergence(z_{\tau + 1}, z_\tau)
  + \sum_{\tau=0}^t \frac{1}{\theta_\tau} \<\error_\tau, z_{\tau+1} -
  y_\tau\>
  - \ell_{-1}(z_0).
\end{align*}

By construction, $\theta_0 = 1$, we have $\ell_{-1}(z_0) = 0$, and
$z_{t+1}$ minimizes $\ell_t(x) + \frac{1}{\stepsize_{t+1}} \prox(x)$
over $\xdomain$. Thus, for any $x^* \in \xdomain$, we have
\begin{equation*}
  \frac{1}{\theta_t^2}[\smoothedfunc[t](x_{t + 1}) + \regularizer(x_{t+1})]
  \le \ell_t(x^*) + \frac{1}{\stepsize_{t+1}}\prox(x^*)
  - \sum_{\tau=0}^t \frac{\extrastep_\tau}{\theta_\tau}
  \divergence(z_{\tau + 1}, z_\tau) + \sum_{\tau=0}^t \frac{1}{\theta_\tau}
  \<\error_\tau, z_{\tau + 1} - y_\tau\>.
\end{equation*}
Recalling the definition~\eqref{eqn:cumulative-linear-loss} of $\ell_t$ and
noting that $\smoothedfunc[t](y_t) + \<\nabla \smoothedfunc[t](y_t), x - y_t\> \le \smoothedfunc[t](x)$ by
convexity, we expand $\ell_t$ and have
\begin{align}
  \lefteqn{\frac{1}{\theta_t^2}[\smoothedfunc[t](x_{t+1}) + \regularizer(x_{t+1})]}
  \nonumber \\
  & \le \sum_{\tau=0}^t \frac{1}{\theta_\tau}
  [\smoothedfunc[\tau](y_\tau) + \<g_\tau, x^* - y_\tau\> + \regularizer(x^*)]
  + \frac{1}{\stepsize_{t+1}} \prox(x^*)
  - \sum_{\tau=0}^t \frac{\extrastep_\tau}{\theta_\tau}
  \divergence(z_{\tau + 1}, z_\tau) + \sum_{\tau=0}^t \frac{1}{\theta_\tau}
  \<\error_\tau, z_{\tau + 1} - y_t\> \nonumber \\
  & = \sum_{\tau=0}^t \frac{1}{\theta_\tau}
  [\smoothedfunc[\tau](y_\tau) + \<\nabla \smoothedfunc[\tau](y_\tau), x^* - y_\tau\> +
    \regularizer(x^*)]
  + \frac{1}{\stepsize_{t+1}} \prox(x^*)
  - \sum_{\tau=0}^t \frac{\extrastep_\tau}{\theta_\tau}
  \divergence(z_{\tau + 1}, z_\tau) + \sum_{\tau=0}^t \frac{1}{\theta_\tau}
  \<\error_\tau, z_{\tau + 1} - x^*\> \nonumber \\
  & \le \sum_{\tau=0}^t \frac{1}{\theta_\tau}[
    \smoothedfunc[\tau](x^*) + \regularizer(x^*)]
  + \frac{1}{\stepsize_{t+1}} \prox(x^*)
  - \sum_{\tau=0}^t \frac{\extrastep_\tau}{\theta_\tau}
  \divergence(z_{\tau + 1}, z_\tau) + \sum_{\tau=0}^t \frac{1}{\theta_\tau}
  \<\error_\tau, z_{\tau + 1} - x^*\>.
  \label{eqn:rda-to-fenchel-adaptive}
\end{align}
Now we use the Fenchel-Young inequality applied to the conjugates
$\half \norm{\cdot}^2$ and $\half\dnorm{\cdot}^2$, which gives
\begin{equation*}
  \<\error_t, z_{t+1} - x^*\>
  = \<\error_t, z_t - x^*\> + \<\error_t, z_{t + 1} - z_t\>
  \le \<\error_t, z_t - x^*\>
  + \frac{1}{2 \extrastep_t} \dnorm{\error_t}^2
  + \frac{\extrastep_t}{2} \norm{z_t - z_{t + 1}}^2.
\end{equation*}
In particular,
\begin{equation*}
  -\frac{\extrastep_t}{\theta_t} \divergence(z_{t+1}, z_t)
  + \frac{1}{\theta_t} \<\error_t, z_{t+1} - x^*\>
  \le \frac{1}{2 \extrastep_t \theta_t} \dnorm{\error_t}^2
  + \frac{1}{\theta_t} \<\error_t, z_t - x^*\>.
\end{equation*}
Using this inequality and rearranging~\eqref{eqn:rda-to-fenchel-adaptive}
gives the statement of the lemma.


\section{Proof of Lemma~\ref{lemma:mg-error-terms}}
\label{AppLemMGError}

Consider the sequence $\sum_{t=0}^{T-1} \frac{1}{\theta_t} \<\error_t,
z_t - x^*\>$. Since $\xdomain$ is compact and $\norm{z_t - x^*} \le
\radius$, we have $\<\error_t, z_t - x^*\> \le \dnorm{\error_t}
\radius$. Further, $\E[\<\error_t, z_t - x^*\> \mid \mc{F}_{t-1}] =
0$, so that $\frac{1}{\theta_t}\<\error_t, z_t - x^*\>$ is a
martingale difference sequence. Further, by setting $c_t = \radius
\stddev / \theta_t$, we have
\begin{equation*}
 \E\bigg[\exp\bigg(\frac{\<\error_t, z_t - x^*\>^2}{c_t^2
   \theta_t^2}\bigg) \mid \mc{F}_{t-1} \bigg]
 \le \E\bigg[\exp\left(\frac{\dnorm{\error_t}^2
   \radius^2}{c_t^2 \theta_t^2}\right) \mid \mc{F}_{t-1}\bigg]
 = \E\bigg[\exp\bigg(\frac{\dnorm{\error_t}^2}{\stddev^2}\bigg)
   \mid \mc{F}_{t-1}\bigg]
 \le \exp(1)
\end{equation*}
by Assumption~\ref{assumption:subexp-errors}.  By applying
Lemma~\ref{lemma:subexp-squared-implies-subg} from
Appendix~\ref{AppTail}, we conclude that $\frac{1}{\theta_t}
\<\error_t, z_t - x^*\>$ is (conditionally) sub-Gaussian with
parameter $\sigma_t^2 \le 4 \radius^2 \stddev^2 / 3
\theta_t^2$. Applying the Azuma-Hoeffding inequality (see
Eq.~\eqref{eqn:hoeffding-azuma}, Appendix~\ref{AppTail}) yields
\begin{equation*}
  \P \biggr [ \sum_{t=0}^{T-1} \frac{1}{\theta_t} \<\error_t, z_t -
    x^*\> \ge w \biggr] \leq \exp \biggr ( -\frac{3 w^2}{8
    \radius^2\stddev^2 \sum_{t=0}^{T-1} \frac{1}{\theta_t^2}} \biggr).
\end{equation*}
Setting $w = \epsilon/\theta_{T-1}$ yields that
\begin{equation*}
  \P \biggr[\theta_{T-1} \sum_{t=0}^{T-1} \frac{1}{\theta_t} \<\error_t,
    z_t - x^*\> \ge \epsilon\biggr]
  \le \exp \biggr(\!-\frac{3 \epsilon^2}{8
  \radius^2 \stddev^2 \sum_{t=0}^{T-1}
  \frac{\theta_{T-1}^2}{\theta_t^2}} \biggr).
\end{equation*}
Noting that $\theta_{T-1} \le \theta_t$ for any $t < T$, we have $\radius^2
\stddev^2 \sum_{t=0}^{T-1} \frac{\theta_{T-1}^2}{\theta_t^2} \le \radius^2
\stddev^2 \sum_{t=0}^{T-1} 1 = \radius^2 \stddev^2 T$, dividing $\epsilon$
again by $\theta_{T-1}$, and recalling that $\theta_{T-1} \le \frac{2}{T +
  1}$, we have
\begin{equation*}
 \P \left[\theta_{T-1}^2 \sum_{t=0}^{T-1} \frac{1}{\theta_t}
   \<\error_t, z_t - x^*\> \ge \epsilon\right]
 \le \exp\left(-\frac{12(T
   + 1)\epsilon^2}{8 \radius^2 \stddev^2}\right) \le
 \exp\left(-\frac{3 T \epsilon^2}{2 \radius^2 \stddev^2}\right),
  \end{equation*}
as claimed~\eqref{EqnBanffErrA}.  The second
claim~\eqref{EqnBanffErrB} follows by setting $\delta = \exp(-\frac{3 T
  \epsilon^2}{2 \radius^2 \stddev^2})$, and then solving to obtain
$\epsilon^2 = \frac{2 \radius^2 \stddev^2}{3 T} \log\frac{1}{\delta}$.


\section{Proof of Lemma~\ref{lemma:opt-error2-terms}}
\label{AppLemOpt2}

Again, recall the $\sigma$-fields $\mathcal{F}_t$ defined prior to
Assumption~\ref{assumption:subexp-errors}.  Define the random variables 
\begin{align*}
X_t & \defeq \frac{1}{2 \extrastep_t} \dnorm{\error_t}^2 -
\frac{1}{2\extrastep_t} \E[\dnorm{\error_t}^2 \mid \mc{F}_{t-1}].
\end{align*}
As an intermediate step, we claim that for $\lambda \le \extrastep_t /
2\stddev^2$, the following bound holds:
\begin{equation}
  \label{EqnIntermediate}
  \E[\exp(\lambda X_t) \mid \mc{F}_{t-1}] =
  \E\left[\exp\left(\frac{\lambda}{2\extrastep_t} (\dnorm{\error_t}^2 -
    \E[\dnorm{\error_t}^2 \mid \mc{F}_{t-1}]) \right) \mid
    \mc{F}_{t-1}\right] \le \exp\left(\frac{8 e}{\extrastep_t^2}
  \lambda^2 \stddev^4\right).
\end{equation}
For now, we proceed with the proof, returning to establish this intermediate
claim later.

The bound~\eqref{EqnIntermediate} implies that $X_t$ is
sub-exponential with parameters $\subexpbound_t =
\extrastep_t/2\stddev^2$ and \mbox{$\subexpparam_t^2 \le 16 e
  \stddev^4 / \extrastep_t^2$.}  Since $\extrastep_t = \extrastep
\sqrt{t + 1}$, it is clear that $\min_t\{\subexpbound_t\} =
\subexpbound_0 = \extrastep_0 / 2\stddev^2$.  By defining $C^2 =
\sum_{t=0}^{T-1} \subexpparam_t^2$, we can apply Theorem I.5.1 from
the book~\cite[pg.~26]{BuldyginKo00} to conclude that
\begin{equation}
\label{eqn:subexp-concentration}
\P\left(\sum_{t=0}^{T-1} X_t \ge \epsilon\right) \leq
\begin{cases} 
 \exp\left(-\frac{\epsilon^2}{2C^2}\right) & \mbox{for}~ 0 \le
 \epsilon \le \subexpbound_0C^2 \\ \exp\left(-\frac{\subexpbound_0
   \epsilon}{2}\right) & \mbox{otherwise, i.e.}~ \epsilon >
 \subexpbound_0 C^2,
\end{cases}
  \end{equation}
which yields the first claim in Lemma~\ref{lemma:opt-error2-terms}.

The second statement involves inverting the bound for the different
regimes of $\epsilon$. Before proving the bound, we note that for
$\epsilon = \subexpbound_0 C^2$, we have $\exp(-\epsilon^2 / 2C^2) =
\exp(-\subexpbound \epsilon / 2)$, so we can invert each of the $\exp$
terms to solve for $\epsilon$ and take the maximum of the bounds.  We
begin with $\epsilon$ in the regime closest to zero, recalling that
$\extrastep_t = \extrastep \sqrt{t + 1}$. We see that
  \begin{equation*}
    C^2 \le \frac{16 e \stddev^4}{\extrastep^2} \sum_{t=0}^{T-1}
    \frac{1}{t + 1} \le \frac{16 e \stddev^4}{\extrastep^2} \log(T + 1).
  \end{equation*}
  Thus, inverting the bound $\delta = \exp(-\epsilon^2 / 2C^2)$, we get
  $\epsilon = \sqrt{2 C^2 \log \frac{1}{\delta}}$, or that
  \begin{equation*}
    \sum_{t=0}^{T-1} \frac{1}{2\extrastep_t} \dnorm{\error_t}^2
    \le \sum_{t=0}^{T-1} \frac{1}{2\extrastep_t} \E[\dnorm{\error_t}]^2
    + 4\sqrt{2 e} \frac{\stddev^2}{\extrastep}
    \sqrt{\log\frac{1}{\delta} \log(T + 1)}
  \end{equation*}
  with probability at least $1 - \delta$. In the large $\epsilon$ regime,
  we solve $\delta = \exp(-\extrastep \epsilon / 4\stddev^2)$ or
  $\epsilon = \frac{4 \stddev^2}{\extrastep}\log\frac{1}{\delta}$, which
  gives that
  \begin{equation*}
    \sum_{t=0}^{T-1} \frac{1}{2 \extrastep_t} \dnorm{\error_t}^2
    \le \sum_{t=0}^{T-1} \frac{1}{2\extrastep_t} \E\dnorm{\error_t}^2
    + \frac{4 \stddev^2}{\extrastep} \log \frac{1}{\delta}
  \end{equation*}
  with probability at least $1 - \delta$, by the
  bound~\eqref{eqn:subexp-concentration}.

We now return to prove the intermediate claim~\eqref{EqnIntermediate}.
Let $X \defeq \dnorm{\error_t}$.  By assumption, we have $\E\exp(X^2 /
\stddev^2) \le \exp(1)$, so for $\lambda \in [0,1]$ we see
  \begin{equation*}
    \P(X^2 / \stddev^2 > \epsilon) \le
    \E[\exp(\lambda X^2 / \stddev^2)] \exp(-\lambda \epsilon)
    \le \exp(\lambda - \lambda \epsilon)
  \end{equation*}
  and replacing $\epsilon$ with $1 + \epsilon$ we have
  $\P(X^2 > \stddev^2 + \epsilon \stddev^2) \le \exp(-\epsilon)$.
  If $\epsilon\stddev^2 \ge \stddev^2 - \E X^2$, then $\stddev^2 - \E X^2
  + \epsilon \stddev^2 \le 2\epsilon \stddev^2$ so
  \begin{equation*}
    \P(X^2 > \E X^2 + 2 \epsilon \stddev^2)
    \le \P(X^2 > \stddev^2 + \epsilon \stddev^2)
    \le \exp(-\epsilon),
  \end{equation*}
  while for $\epsilon \stddev^2 < \stddev^2 - \E X^2$, we clearly have $\P(X^2
  - \E X^2 > \epsilon \stddev^2) \le 1 \le \exp(1) \exp(-\epsilon)$ since
  $\epsilon \le 1$.  In either case, we have
  \begin{equation*}
    \P(X^2 - \E X^2 > \epsilon) \le \exp(1)
    \exp\left(-\frac{\epsilon}{2 \stddev^2}\right).
  \end{equation*}
  For the opposite concentration inequality, we see that
  \begin{equation*}
    \P((\E X^2 - X^2) / \stddev^2 > \epsilon)
    \le \E[\exp(\lambda \E X^2 / \stddev^2)\exp(-\lambda X^2 / \stddev^2)]
    \exp(-\lambda \epsilon)
    \le \exp(\lambda - \lambda \epsilon)
  \end{equation*}
  or $\P(X^2 - \E X^2 < -\stddev^2 \epsilon) \le \exp(1)\exp(-\epsilon)$.
  Using the union bound, we have
  \begin{equation}
    \label{eqn:concentration-error2}
    \P(|X^2 - \E X^2| > \epsilon) \le 2\exp(1)
    \exp\left(-\frac{\epsilon}{ 2 \stddev^2}\right).
  \end{equation}

  Now we apply Lemma~\ref{lemma:concentration-to-subexp} to the
  bound~\eqref{eqn:concentration-error2} to see that
  $\dnorm{\error_t}^2 - \E[\dnorm{\error_t}^2] = X^2 - \E[X^2]$ is
  sub-exponential with parameters $\subexpbound \ge \stddev^2$ and
  $\subexpparam^2 \le 32 e \stddev^4$.


\section{Properties of randomized smoothing}
\label{appendix:smoothing}

In this section, we discuss the analytic properties of the smoothed
function $\smoothedfunc$ from the convolution~\eqref{eqn:convolve}.
We assume throughout that functions are sufficiently integrable
without bothering with measurability conditions (since $F(\cdot;
\statsample)$ is convex, this is no real loss of
generality~\cite{Bertsekas73,RockafellarWe98}). By Fubini's theorem,
we have
\begin{equation*}
  \smoothedfunc(x) = \int_{\R^d} \int_\statsamplespace F(x + y; \statsample)
  d\statprob(\statsample)
  \smoothingdist(y) dy
  = \int_\statsamplespace \int_{\R^d} F(x + y; \statsample) \smoothingdist(y)
  dy d\statprob(\statsample)
  = \int_\statsamplespace F_\smoothingdist(x; \statsample)
  d\statprob(\statsample).
\end{equation*}
Here $F_\smoothingdist(x; \statsample) = (F(\cdot; \statsample) *
\smoothingdist(-\cdot))(x)$. We begin with the observation that since
$\smoothingdist$ is a density with respect to Lebesgue measure, the
function $\smoothedfunc$ is in fact
differentiable~\cite{Bertsekas73}. So we have already made our problem
somewhat smoother, as it is now differentiable; for the remainder, we
consider finer properties of the smoothing operation. In particular,
we will show that under suitable conditions on $\smoothingdist$,
$F(\cdot; \statsample)$, and $\statprob$, the function
$\smoothedfunc$ is uniformly close to $f$ over $\xdomain$ and
$\nabla \smoothedfunc$ is Lipschitz continuous.

\subsection{Statements of smoothing lemmas}

A remark on notation before proceeding: since $f$ is convex, it is
almost-everywhere differentiable, and we can abuse notation and take its
gradient inside of integrals and expectations with respect to Lebesgue
measure. Similarly, $F(\cdot; \statsample)$ is almost everywhere
differentiable with respect to Lebesgue measure, so we use the same abuse of
notation for $F$ and write $\nabla F(x + Z; \statsample)$, which exists with
probability 1.  We prove the following set of smoothness lemmas at the end of
this section.

\begin{lemma}
  \label{lemma:linf}
  Let $\smoothingdist$ be the uniform density on the
  $\ell_\infty$-ball of radius $\smoothparam$. Assume that
  \mbox{$\E[\linf{\partial F(x; \statsample)}^2] \le \lipobj^2$} for
  all $x \in \xdomain + B_\infty(0, u)$ Then
  \begin{enumerate}[(i)]
  \item $f(x) \le \smoothedfunc(x) \le f(x) + \frac{\lipobj d}{2} u$
  \item $\smoothedfunc$ is $\lipobj$-Lipschitz with respect to the
    $\ell_1$-norm over $\xdomain$.
  \item $\smoothedfunc$ is continuously differentiable; moreover, its
    gradient is $\frac{\lipobj}{\smoothparam}$-Lipschitz continuous with
    respect to the $\ell_1$-norm.
  \item Let $Z \sim \smoothingdist$. Then $\E[\nabla F(x + Z; \statsample)] =
    \nabla \smoothedfunc(x)$ and $\E[\linf{\nabla \smoothedfunc(x) -
      \nabla F(x + Z; \statsample)}^2] \le 4 \lipobj^2$.
  \end{enumerate}
  There exists a function $f$ for which each of the estimates (i)--(iii) are
  tight simultaneously, and (iv) is tight
  at least to a factor of $1/4$.
\end{lemma}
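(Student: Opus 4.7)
The plan is to exploit the product structure of the smoothing density: since $\smoothingdist$ is uniform on $B_\infty(0,\smoothparam)$, we can write $\smoothedfunc(x) = \frac{1}{(2\smoothparam)^d}\int_{[-\smoothparam,\smoothparam]^d} f(x+z)\,dz$, and the $d$-fold product means partial derivatives reduce to one-dimensional boundary terms. The assumption $\E[\linf{\partial F(x;\statsample)}^2] \le \lipobj^2$ is equivalent to $F(\cdot;\statsample)$ being $\lipobj$-Lipschitz with respect to $\lone{\cdot}$ in the relevant averaged sense, which I will use throughout.

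For (i), the lower bound is immediate from Jensen's inequality since $Z$ is zero-mean: $f(x) = f(x+\E[Z]) \le \E f(x+Z) = \smoothedfunc(x)$. For the upper bound, the $\ell_1$-Lipschitz property gives $\smoothedfunc(x) - f(x) \le \lipobj\,\E\lone{Z}$, and since the coordinates of $Z$ are i.i.d.\ uniform on $[-\smoothparam,\smoothparam]$, we have $\E|Z_i| = \smoothparam/2$, hence $\E\lone{Z} = d\smoothparam/2$. Part (ii) follows directly by writing $|\smoothedfunc(x)-\smoothedfunc(y)| \le \E|f(x+Z)-f(y+Z)| \le \lipobj\lone{x-y}$.

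The main obstacle is part (iii). I would differentiate under the integral along one coordinate at a time, using the product structure: denoting by $Z_{-i}$ the sub-vector uniform on $[-\smoothparam,\smoothparam]^{d-1}$, the fundamental theorem of calculus in the $i$th coordinate yields
\begin{equation*}
\partial_i \smoothedfunc(x) = \frac{1}{2\smoothparam}\,\E_{Z_{-i}}\!\bigl[f(x+\smoothparam e_i + Z_{-i}) - f(x-\smoothparam e_i + Z_{-i})\bigr].
\end{equation*}
Continuity of $\partial_i \smoothedfunc$ then follows from continuity of $f$ and dominated convergence. Applying this formula to two points $x,y$ and invoking the $\ell_1$-Lipschitz property of $f$ inside the expectation gives $|\partial_i \smoothedfunc(x) - \partial_i \smoothedfunc(y)| \le \frac{\lipobj}{\smoothparam}\lone{x-y}$ for every $i$, so taking the maximum over $i$ yields $\linf{\nabla\smoothedfunc(x) - \nabla\smoothedfunc(y)} \le \frac{\lipobj}{\smoothparam}\lone{x-y}$. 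For (iv), a dominated convergence argument (justified by the bound $\E\linf{\partial F(x+Z;\statsample)}^2 \le \lipobj^2$) lets me interchange gradient and expectation, giving $\E[\nabla F(x+Z;\statsample)] = \nabla\smoothedfunc(x)$. The variance bound then follows from $\linf{\nabla \smoothedfunc(x)} \le \lipobj$ (by part (ii)) and the inequality $(a+b)^2 \le 2a^2 + 2b^2$, namely $\E\linf{\nabla\smoothedfunc(x) - \nabla F(x+Z;\statsample)}^2 \le 2\lipobj^2 + 2\lipobj^2 = 4\lipobj^2$. For tightness, I would exhibit $f(x) = \lone{x}$: at $x=0$ the upper bound in (i) becomes equality, (ii) is saturated as $|x_i| \to \infty$, (iii) is tight because the one-dimensional computation gives $\smoothedfunc[]'(x) = x/\smoothparam$ for $|x| \le \smoothparam$ in each coordinate, and (iv) is sharp up to the factor $4$ since $\nabla\smoothedfunc(0)=0$ while $\linf{\nabla F(Z;\statsample)} = 1$ almost surely.
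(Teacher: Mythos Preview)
Your proposal is correct, and parts (i), (ii), (iv) and the tightness examples match the paper essentially verbatim.  The one genuine divergence is in part~(iii).  The paper proceeds abstractly: it invokes a general bound (its Lemma~\ref{lemma:lipschitz-gradient}) of the form
\[
\linf{\nabla\smoothedfunc(x)-\nabla\smoothedfunc(y)}
\;\le\;\lipobj\int|\smoothingdist(z-x)-\smoothingdist(z-y)|\,dz
\;=\;\frac{\lipobj}{(2\smoothparam)^d}\,\lebesgue\bigl(B_\infty(x,\smoothparam)\setdiff B_\infty(y,\smoothparam)\bigr),
\]
and then controls the symmetric-difference volume via a separate optimization lemma showing $\prod_i\hinge{2\smoothparam-|x_i-y_i|}\ge(2\smoothparam)^d-\lone{x-y}(2\smoothparam)^{d-1}$.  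Your route is more elementary and tailored to the product structure: differentiating under the integral in one coordinate and applying the fundamental theorem of calculus gives the closed form $\partial_i\smoothedfunc(x)=\frac{1}{2\smoothparam}\E_{Z_{-i}}[f(x+\smoothparam e_i+Z_{-i})-f(x-\smoothparam e_i+Z_{-i})]$, after which the $\ell_1$-Lipschitz bound on $f$ yields the result in one line without any auxiliary lemma.  Your argument is shorter for this specific density; the paper's has the advantage that the same total-variation bound is reused for the Gaussian and $\ell_2$-ball cases, where no product structure is available.
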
 
\paragraph{Remark:}  Note that
the hypothesis of this lemma is satisfied if for any fixed
$\statsample \in \statsamplespace$, the function $F(\cdot;
\statsample)$ is $\lipobj$-Lipschitz with respect to the
$\ell_1$-norm.

\vspace*{.2in}

The following lemma provides bounds for uniform smoothing of functions
Lipschitz with respect to the $\ell_2$-norm while sampling from an
$\ell_\infty$-ball. 
\begin{lemma}
  \label{lemma:linf-to-ltwo}
  Let $\smoothingdist$ be the uniform density on $B_\infty(0,
  \smoothparam)$ and assume  that $\E[\ltwo{\partial F(x;
      \statsample)}^2] \le \lipobj^2$ for $x \in \xdomain +
  B_\infty(0, u)$. Then
  \begin{enumerate}[(i)]
  \item The function $f$ satisfies the upper bound $f(x) \le
    \smoothedfunc(x) \le f(x) + \lipobj u \sqrt{d}$
  \item The function $\smoothedfunc$ is $\lipobj$-Lipschitz over
    $\xdomain$.
  \item The function $\smoothedfunc$ is continuously
    differentiable; moreover, its gradient is $\frac{2 \sqrt{d}
      \lipobj}{u}$ Lipschitz continuous.
  \item For random variables $Z \sim \smoothingdist$ and $\statsample
    \sim \statprob$, we have 
    \begin{equation*}
      \E[\nabla F(x + Z; \statsample)] = \nabla \smoothedfunc(x), \quad
      \mbox{and} \quad \E[ \ltwo{\nabla \smoothedfunc(x) - \nabla F(x +
          Z; \statsample)}^2] \le \lipobj^2.
    \end{equation*}
    The latter estimate is tight.
  \end{enumerate}
\end{lemma}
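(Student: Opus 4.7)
\medskip

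\noindent\textbf{Proof proposal for Lemma~\ref{lemma:linf-to-ltwo}.}
The plan is to handle the four claims in order, with (iii) requiring the only real calculation; the remaining parts follow from the convexity of $f$, the hypothesis $\E[\ltwo{\partial F(x;\statsample)}^2] \le \lipobj^2$, and Jensen's inequality. For (i), since $Z$ has zero mean, convexity of $f$ and Jensen give $f(x) \le \E[f(x+Z)] = \smoothedfunc(x)$. For the upper bound I would use that $Z \in B_\infty(0,u)$ almost surely, so $\ltwo{Z} \le \sqrt{d}\,\linf{Z} \le u\sqrt{d}$, and the $\ell_2$-Lipschitz continuity of $f$ (implied by the variance bound on $\partial F$ and Jensen) yields $\smoothedfunc(x) - f(x) \le \E[\lipobj\ltwo{Z}] \le \lipobj u \sqrt{d}$. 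Claim (ii) is immediate: $|\smoothedfunc(x) - \smoothedfunc(y)| \le \E[|f(x+Z) - f(y+Z)|] \le \lipobj \ltwo{x-y}$.

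The main step is (iii). Because $\smoothingdist$ is a product density on $[-u,u]^d$ and $f$ is convex (hence a.e.\ differentiable with bounded gradient), I would write
\begin{equation*}
  \partial_i \smoothedfunc(x) = \frac{1}{(2u)^d} \int_{[-u,u]^{d-1}} \!\!\int_{-u}^{u} \partial_i f(x+z) \, dz_i \, d z_{-i},
\end{equation*}
and apply the fundamental theorem of calculus in the $i$-th coordinate to convert the inner integral into a boundary difference, yielding
\begin{equation*}
  \partial_i \smoothedfunc(x) = \frac{1}{(2u)^d} \int_{[-u,u]^{d-1}} \bigl[f(x + z_{-i} + u e_i) - f(x + z_{-i} - u e_i)\bigr]\, dz_{-i}.
\end{equation*}
Using the $\ell_2$-Lipschitz bound on $f$ separately on each of the two summands gives $|\partial_i \smoothedfunc(x) - \partial_i \smoothedfunc(y)| \le \lipobj \ltwo{x-y}/u$, and summing over $i$ via the trivial $\sqrt{d}$ inequality,
\begin{equation*}
  \ltwo{\nabla \smoothedfunc(x) - \nabla \smoothedfunc(y)} \le \frac{\sqrt{d}\,\lipobj}{u}\,\ltwo{x-y},
\end{equation*}
which establishes (iii) (with room to spare compared to the stated constant $2\sqrt{d}$). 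The main subtlety here is justifying the differentiation under the integral and the FTC step when $f$ is only a.e.\ differentiable; I would handle this by first establishing it for $F(\cdot;\statsample)$ (where convexity plus the containment $\xdomain + \smoothparam_0 \supp \smoothingdist \subseteq \dom F(\cdot;\statsample)$ ensures the subdifferential exists and is a.e.\ a gradient), and then using Fubini to push the argument through to $\smoothedfunc$.

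Finally, (iv) follows from Fubini's theorem: $\nabla \smoothedfunc(x) = \E_{Z,\statsample}[\nabla F(x + Z; \statsample)]$, so using $\E\ltwo{X - \E X}^2 \le \E \ltwo{X}^2$ with $X = \nabla F(x+Z;\statsample)$ and the hypothesis $\E[\ltwo{\partial F(x';\statsample)}^2] \le \lipobj^2$ for $x' \in \xdomain + B_\infty(0,u)$ gives the variance bound $\lipobj^2$. Tightness of this last inequality can be exhibited by a linear function $F$ with deterministic gradient of $\ell_2$-norm $\lipobj$, for which $\nabla F(x+Z;\statsample) = \nabla \smoothedfunc(x)$ almost surely and the bound becomes an equality in the limiting degenerate case.
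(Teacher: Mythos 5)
Your parts (i) and (ii) and the variance bound in (iv) match the paper's argument almost verbatim, and your FTC-based derivation of (iii) is a genuinely different and somewhat slicker route than the paper's. The paper invokes Lemma~\ref{lemma:lipschitz-gradient} to reduce the gradient-Lipschitz question to a total-variation distance between shifted densities, then computes the Lebesgue measure of the symmetric difference $B_\infty(x,u) \setdiff B_\infty(y,u)$ by hand (via Lemma~\ref{lemma:linf-optimization}); you instead write each $\partial_i \smoothedfunc$ as a boundary integral using the fundamental theorem of calculus in the $i$-th coordinate and apply the $\ell_2$-Lipschitz bound coordinatewise. Both arrive at the same constant $\sqrt{d}\,\lipobj/u$, strictly better than the $2\sqrt{d}\,\lipobj/u$ stated in the lemma, so your ``room to spare'' observation is right. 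The FTC approach has the advantage of avoiding the auxiliary volume inequality; it relies on the one-dimensional fact that a convex Lipschitz map of a real variable is absolutely continuous, so the FTC reduction you sketch is sound, provided you are careful that $\nabla \smoothedfunc(x) = \E[\nabla f(x+Z)]$ (differentiation under the integral, which part (iv) also needs) is established first or invoked independently.

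The one genuine error is your tightness claim for (iv). A linear $F(\cdot;\statsample)$ with deterministic gradient $v$ of norm $\lipobj$ gives $\nabla F(x+Z;\statsample) \equiv v = \nabla \smoothedfunc(x)$, so the variance $\E\bigl[\ltwo{\nabla \smoothedfunc(x) - \nabla F(x+Z;\statsample)}^2\bigr]$ is exactly $0$, not $\lipobj^2$; the inequality $0 \le \lipobj^2$ is then slack, not tight. What makes the bound tight is a function whose gradient has constant magnitude $\lipobj$ but averages to nearly zero under the smoothing --- the canonical example, and the one the paper uses, is $f(x) = \lipobj|x|$ in one dimension, where $\smoothedfunc'(0) = 0$ while $|f'(Z)| = \lipobj$ almost surely, so $\E\bigl[(\smoothedfunc'(0) - f'(Z))^2\bigr] = \lipobj^2$ exactly. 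You should replace the linear example with this one (or its $d$-dimensional analogue $\lipobj|x_1|$).
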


\vspace*{.2in}

A similar lemma can be proved when $\smoothingdist$ is the density of
the uniform distribution on $B_2(0, u)$. In this case, Yousefian et al.\ give
(i)--(iii) of the following lemma~\cite{YousefianNeSh12}.
\begin{lemma}[Yousefian, Nedi\'{c}, Shanbhag]
  \label{lemma:ltwo}
  Let $\smoothedfunc$ be defined as in \eqref{eqn:convolve} where
  $\smoothingdist$ is the uniform density on the $\ell_2$-ball of radius
  $u$. Assume that $\E[\ltwo{\partial F(x; \statsample)}^2] \le \lipobj^2$ for
  $x \in \xdomain + B_2(0, u)$. Then
  \begin{enumerate}[(i)]
  \item $f(x) \le \smoothedfunc(x) \le f(x) + \lipobj u$
  \item $\smoothedfunc$ is $\lipobj$-Lipschitz over $\xdomain$.
  \item $\smoothedfunc$ is continuously differentiable; moreover,
    its gradient is $\frac{\lipobj \sqrt{d}}{u}$-Lipschitz continuous.
  \item Let $Z \sim \smoothingdist$. Then $\E[\nabla F(x + Z; \statsample)] =
    \nabla \smoothedfunc(x)$, and $\E[\ltwo{\nabla \smoothedfunc(x) -
      \nabla F(x + Z; \statsample)}^2] \le \lipobj^2$.
  \end{enumerate}
  In addition, there exists a function $f$ for which each of the bounds
  (i)--(iv) is tight---cannot be improved by more than a constant
  factor---simultaneously.
\end{lemma}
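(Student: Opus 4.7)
Since parts (i)--(iii) are attributed to Yousefian, Nedi\'{c}, and Shanbhag, my plan is to sketch each briefly, then focus on the variance bound (iv) and on constructing a single worst-case function that demonstrates the joint tightness of all four bounds.

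Parts (i), (ii), and (iv) are the easy pieces. For the lower bound $f(x) \le \smoothedfunc(x)$, I would invoke Jensen's inequality, using the fact that $Z$ is zero-mean: $\smoothedfunc(x) = \E[f(x+Z)] \ge f(x + \E Z) = f(x)$. For the upper bound, the hypothesis $\E[\ltwo{\partial F(x;\statsample)}^2] \le \lipobj^2$ combined with Jensen implies that $f$ itself is $\lipobj$-Lipschitz with respect to $\ltwo{\cdot}$ on $\xdomain + B_2(0,\smoothparam)$, so $f(x+Z) \le f(x) + \lipobj\ltwo{Z} \le f(x) + \lipobj\smoothparam$ pointwise on $\supp \smoothingdist$, and taking expectations gives (i). For (ii), the same Lipschitz argument applied inside the expectation yields $|\smoothedfunc(x) - \smoothedfunc(y)| \le \E|f(x+Z)-f(y+Z)| \le \lipobj\ltwo{x-y}$. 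For (iv), unbiasedness follows from interchanging gradient and expectation, which is justified by dominated convergence and the bounded-subgradient hypothesis; then the bias-variance decomposition
\begin{equation*}
\E\big[\ltwo{\nabla F(x+Z;\statsample) - \nabla \smoothedfunc(x)}^2\big] = \E\big[\ltwo{\nabla F(x+Z;\statsample)}^2\big] - \ltwo{\nabla \smoothedfunc(x)}^2 \le \lipobj^2
\end{equation*}
gives the variance bound directly from the hypothesis on $\E[\ltwo{\partial F(x;\statsample)}^2]$.

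Part (iii) is the main obstacle, since the naive surface-integral bound produces an $\lipobj d/\smoothparam$ Lipschitz constant rather than $\lipobj\sqrt{d}/\smoothparam$. The plan is to write $\smoothedfunc(x) = \frac{1}{\mathrm{vol}(B)} \int_{B(x,\smoothparam)} f(w)\,dw$ and differentiate via the divergence theorem to obtain $\nabla \smoothedfunc(x) = \frac{1}{\mathrm{vol}(B)}\int_{\partial B(0,\smoothparam)} f(x+z)\hat n(z)\,dS(z)$, where $\hat n$ is the outward unit normal. Differencing at $x$ and $y$ and applying Lipschitz continuity of $f$ yields a surface integral bound that naively gives $\lipobj \cdot d/\smoothparam$ because $\mathrm{surf}(B)/\mathrm{vol}(B) = d/\smoothparam$. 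To recover the $\sqrt{d}$ factor I would follow Yousefian et al.\ and exploit that only the component of $\hat n$ in the direction $(x-y)/\ltwo{x-y}$ contributes, so the estimate reduces to the expectation of $|\langle \hat n, u\rangle|$ over a unit vector $u$, which is $O(1/\sqrt{d})$ by a direct spherical-integration computation; this saves the required factor of $\sqrt{d}$.

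Finally, for the tightness claim, the plan is to exhibit a single function---the natural candidate is $f(x) = \ltwo{x}$ on $\R^d$ (so $\lipobj = 1$)---and verify that all four bounds hold up to universal constant factors simultaneously. Direct spherical integration gives $\smoothedfunc(0) = \E\ltwo{Z} = \frac{d}{d+1}\smoothparam$, tight for (i); the Lipschitz constant of $\smoothedfunc$ is exactly $1$ outside a neighborhood of the origin, tight for (ii); computing $\nabla^2 \smoothedfunc$ near the origin (using the surface-integral expression for $\nabla \smoothedfunc$) yields an operator-norm estimate of order $\sqrt{d}/\smoothparam$, tight for (iii); and since $\nabla F = x/\ltwo{x}$ is a unit vector while $\nabla \smoothedfunc(x)$ has norm strictly smaller than $1$ on the ball, the variance $\E\ltwo{\nabla F(x+Z) - \nabla \smoothedfunc(x)}^2$ is bounded below by a positive constant for $x$ in a neighborhood of $0$, tight for (iv). The only delicate calculation is the Hessian estimate for (iii), which reduces to integrating against $\hat n\hat n^\top$ over the sphere.
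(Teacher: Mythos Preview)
Your treatment of (i), (ii), and (iv) is fine and essentially matches the paper's approach: the paper also cites Yousefian et al.\ for (i)--(iii) and, in the analogous Lemmas~\ref{lemma:linf-to-ltwo} and~\ref{lemma:normal}, proves (iv) via exactly the bias--variance identity $\E\ltwo{g-\E g}^2 = \E\ltwo{g}^2 - \ltwo{\E g}^2$ that you use. Your sketch for (iii) via the divergence theorem and the $\E|\langle \hat n,u\rangle| = O(1/\sqrt{d})$ observation is a legitimate route to the $\lipobj\sqrt{d}/\smoothparam$ constant.

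The genuine gap is in the tightness construction. The function $f(x)=\ltwo{x}$ does \emph{not} witness tightness of (iii). By symmetry $\nabla^2 \smoothedfunc(0) = cI$, and a direct computation gives
\[
\nabla^2 \smoothedfunc(0)
= \E\!\left[\frac{I}{\ltwo{Z}} - \frac{ZZ^\top}{\ltwo{Z}^3}\right]
= \frac{d-1}{d}\,\E\!\left[\frac{1}{\ltwo{Z}}\right] I
= \frac{d-1}{d}\cdot\frac{d}{(d-1)\smoothparam}\,I
= \frac{1}{\smoothparam}\,I,
\]
so the gradient-Lipschitz constant of the smoothed norm is only $\Theta(1/\smoothparam)$, off by a factor of $\sqrt{d}$ from the claimed bound. (Away from the origin the Hessian is even smaller.) Thus $\ltwo{x}$ certifies tightness of (i), (ii), (iv) but not of (iii).

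The paper handles this by taking a convex combination of two functions (see Lemma~\ref{lemma:smoothing-sharp}): $f_1(x)=\lipobj\ltwo{x}$, which gives the $\Theta(\lipobj\smoothparam)$ uniform error needed for (i), and $f_2(x)=\lipobj\big|\langle x, y/\ltwo{y}\rangle - 1/2\big|$, which via Lemma~\ref{lemma:lipschitz-gradient} attains the variation bound $\int|\smoothingdist(z)-\smoothingdist(z-y)|\,dz = \Theta(\sqrt{d}\,\ltwo{y}/\smoothparam)$ and hence the $\Theta(\lipobj\sqrt{d}/\smoothparam)$ gradient-Lipschitz constant needed for (iii). Neither function alone suffices; the combination $f=\tfrac12 f_1+\tfrac12 f_2$ is what makes all four bounds tight simultaneously. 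You should replace your single-function example with this two-function construction (or something equivalent).
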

Lastly, for situations in which $F(\cdot; \statsample)$ is $\lipobj$-Lipschitz
with respect to the $\ell_2$-norm over all of $\R^d$ and for
$\statprob$-a.e.\ $\statsample$, we can use the normal distribution to perform
smoothing of the expected function $f$. The following lemma is similar to a
result of Lakshmanan and de Farias~\cite[Lemma 3.3]{LakshmananFa08}, but they
consider functions Lipschitz-continuous with respect to the
$\ell_\infty$-norm, i.e.\ $|f(x) - f(y)| \le L \linf{x - y}$, which is too
stringent for our purposes, and we carefully quantify the dependence on the
dimension of the underlying problem.
\begin{lemma}
  \label{lemma:normal}
  Let $\smoothingdist$ be the $N(0, u^2 I_{d \times d})$ distribution.
  Assume that $F(\cdot; \statsample)$ is $\lipobj$-Lipschitz with
  respect to the $\ell_2$-norm---that is
  \begin{align*}
    \sup\{ \ltwo{g} \mid g \in \partial F(x; \statsample), x \in
    \xdomain\} & \le \lipobj \qquad \mbox{for
      $\statprob$-a.e.\ $\statsample$.}
  \end{align*}
  Then the following properties hold:
  \begin{enumerate}[(i)]
  \item $f(x) \le \smoothedfunc(x) \le f(x) + \lipobj u \sqrt{d}$
  \item $\smoothedfunc$ is $\lipobj$-Lipschitz with respect to the $\ell_2$
    norm
  \item $\smoothedfunc$ is continuously differentiable; moreover,
    its gradient is $\frac{\lipobj}{u}$-Lipschitz continuous with
    respect to the $\ell_2$-norm.
  \item Let $Z \sim \smoothingdist$. Then $\E[\nabla F(x + Z; \statsample)] =
    \nabla \smoothedfunc(x)$, and $\E[\ltwo{\nabla \smoothedfunc(x) -
      \nabla F(x + Z; \statsample)}^2] \le \lipobj^2$.
  \end{enumerate}
  In addition, there exists a function $f$ for which each of the bounds
  (i)--(iv) cannot be improved by more than a constant factor.
\end{lemma}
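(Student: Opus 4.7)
My plan is to verify the four properties in turn, with the Lipschitz-gradient bound (iii) being the only step requiring substantial work.

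Parts (i) and (ii) should be quick. The lower bound $f(x) \le \smoothedfunc(x)$ is Jensen's inequality applied to convex $f$ using $\E[Z] = 0$. For the upper bound, I apply the Lipschitz assumption inside the expectation and bound $\E\ltwo{Z} \le \sqrt{\E\ltwo{Z}^2} = u\sqrt{d}$ to get $\smoothedfunc(x) - f(x) \le \lipobj\E\ltwo{Z} \le \lipobj u\sqrt{d}$. For (ii), pointwise Lipschitzness in $Z$ immediately yields $|\smoothedfunc(x) - \smoothedfunc(y)| \le \E|f(x+Z) - f(y+Z)| \le \lipobj\ltwo{x-y}$.

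The main step is (iii), for which I would use Stein's integration-by-parts twice. Writing $p(z) = (2\pi u^2)^{-d/2}\exp(-\ltwo{z}^2/(2u^2))$ and using $\nabla p(z) = -(z/u^2)\,p(z)$, the change of variables $y = x+z$ in $\smoothedfunc(x) = \int f(y)p(y-x)\,dy$ and one differentiation under the integral give
\[
  \nabla\smoothedfunc(x) \;=\; \frac{1}{u^2}\,\E[f(x+Z)\,Z].
\]
Differentiating once more in $x$ (justified by dominated convergence, since the Lipschitz property yields the integrable envelope $|f(x+z)| \le |f(x)| + \lipobj\ltwo{z}$) produces
\[
  \nabla^2\smoothedfunc(x) \;=\; \frac{1}{u^2}\,\E\bigl[\nabla f(x+Z)\,Z^\top\bigr].
\]
For any unit vectors $v,w\in\R^d$, Cauchy-Schwarz then gives
\[
  \bigl|v^\top \nabla^2\smoothedfunc(x)\,w\bigr|
  \;=\; \frac{1}{u^2}\bigl|\E[\langle\nabla f(x+Z),v\rangle\,\langle Z,w\rangle]\bigr|
  \;\le\; \frac{1}{u^2}\,\sqrt{\E\langle\nabla f(x+Z),v\rangle^2}\,\sqrt{\E\langle Z,w\rangle^2}
  \;\le\; \frac{\lipobj}{u},
\]
where I use $\ltwo{\nabla f(x+Z)} \le \lipobj$ almost surely, hence $\E\langle\nabla f,v\rangle^2 \le \lipobj^2$, and $\E\langle Z,w\rangle^2 = w^\top (u^2 I) w = u^2$. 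Taking the supremum over unit $v,w$ bounds the operator norm of $\nabla^2\smoothedfunc$ by $\lipobj/u$, giving the claimed Lipschitz-gradient bound in $\ell_2$.

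Part (iv) is then a short consequence: unbiasedness $\E[\nabla F(x+Z;\statsample)] = \nabla\smoothedfunc(x)$ follows from Fubini together with $\nabla f = \E_\statsample[\nabla F(\cdot;\statsample)]$ (almost everywhere, legitimate since $F(\cdot;\statsample)$ is Lipschitz), exactly the interchange used in the derivation of $\nabla\smoothedfunc$ above. The variance bound uses the elementary $\E\ltwo{X - \E X}^2 \le \E\ltwo{X}^2$ applied to $X = \nabla F(x+Z;\statsample)$, whose norm is bounded by $\lipobj$ by hypothesis. The hard part of the whole argument is the double differentiation under the integral for (iii); once the Stein identity is in place the rest is Cauchy-Schwarz. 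For the tightness claim, I would take $f(x) = \lipobj\ltwo{x}$: at $x = 0$ one has $\smoothedfunc(0) = \lipobj\E\ltwo{Z} = \Theta(\lipobj u\sqrt{d})$ (saturating (i)); the Lipschitz constant of $\smoothedfunc$ tends to $\lipobj$ as $\ltwo{x}\to\infty$ (saturating (ii)); a direct computation, using radial symmetry to reduce $\nabla^2\smoothedfunc(0)$ to a scalar multiple of $I$, shows $\|\nabla^2\smoothedfunc(0)\|_{\rm op} = \Theta(\lipobj/u)$ (saturating (iii)); and (iv) is tight whenever $\ltwo{\nabla F} \equiv \lipobj$.
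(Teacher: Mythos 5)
Your proofs of (i), (ii), and (iv) match the paper's, and your route through (iii) is a genuinely different argument that also works. The paper reduces (iii) to the general Lemma~\ref{lemma:lipschitz-gradient}, which bounds $\ltwo{\nabla \smoothedfunc(x) - \nabla \smoothedfunc(y)}$ by $\lipobj$ times the total-variation distance $\int |\smoothingdist(z-x) - \smoothingdist(z-y)|\,dz$, and then computes this TV distance for the Gaussian by reducing to a one-dimensional integral. Your Stein's-identity approach instead writes $\nabla^2 \smoothedfunc(x) = u^{-2}\,\E[\nabla f(x+Z) Z^\top]$ and bounds the operator norm by Cauchy--Schwarz, which is cleaner and more self-contained for the Gaussian case; the paper's method is less slick here but has the advantage of being reusable for the uniform-on-a-ball distributions in Lemmas~\ref{lemma:linf} and~\ref{lemma:ltwo}, where no Stein identity is available.

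However, your tightness argument for (iii) is wrong. For $f(x) = \lipobj\ltwo{x}$, your own Stein formula gives
\begin{equation*}
  \nabla^2 \smoothedfunc(0) \;=\; \frac{\lipobj}{u^2}\,\E\Bigl[\frac{Z}{\ltwo{Z}}\,Z^\top\Bigr]
  \;=\; \frac{\lipobj}{u^2}\cdot\frac{\E\ltwo{Z}}{d}\,I
  \;=\; \Theta\Bigl(\frac{\lipobj}{u\sqrt{d}}\Bigr) I,
\end{equation*}
since $\E[Z_i Z_j/\ltwo{Z}] = 0$ for $i\neq j$ and the diagonal entries sum to $\E\ltwo{Z} = \Theta(u\sqrt{d})$. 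So the operator norm of $\nabla^2\smoothedfunc(0)$ is $\Theta(\lipobj/(u\sqrt{d}))$, a factor of $\sqrt{d}$ short of $\lipobj/u$. Intuitively, $\lipobj\ltwo{x}$ is non-smooth only at a single point, and the Gaussian perturbation spends almost no time near it, so smoothing it produces little curvature; it is the right example for (i), (ii), and (iv), but not for (iii). The paper's Lemma~\ref{lemma:smoothing-sharp} handles this by taking the convex combination $f = \tfrac12 f_1 + \tfrac12 f_2$ with $f_1(x) = \lipobj\ltwo{x}$ (which saturates the uniform-error bound (i)) and the one-dimensional kink $f_2(x) = \lipobj\bigl|\langle x, v\rangle - 1/2\bigr|$ (which, being non-smooth along an entire hyperplane, forces $\errorgrad = \Theta(\lipobj/u)$), and establishes the product lower bound $\erroruniform(f)\,\errorgrad(f) \gtrsim \lipobj^2\sqrt{d}$. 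You would need a similar two-part construction to make the ``(i)--(iv) simultaneously tight'' claim go through.
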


Our final lemma illustrates the sharpness of the bounds we have proved for
functions that are Lipschitz with respect to the $\ell_2$-norm. Specifically,
we show that at least for the normal and uniform distributions, it is
impossible to get more favorable tradeoffs between the uniform approximation
error of the smoothed function $\smoothedfunc$ and the Lipschitz
continuity of $\nabla \smoothedfunc$. We begin with the following
definition of our two types of error, then give the lemma:
\begin{align}
  \erroruniform(f)
  & \defeq
  \inf\big\{L \in \R \mid \sup_{x \in \xdomain} |f(x) - \smoothedfunc(x)|
  \le L\big\} \label{eqn:def-error-uniform} \\
  \errorgrad(f)
  & \defeq
  \inf\big\{L \in \R \mid \ltwo{\nabla \smoothedfunc(x)
    - \nabla \smoothedfunc(y)} \le L \ltwo{x - y}
  ~ \forall ~ x, y \in \xdomain\big\}
  \label{eqn:def-error-grad}
\end{align}
\begin{lemma}
  \label{lemma:smoothing-sharp}
  For $\smoothingdist$ equal to either the uniform distribution on $B_2(0,
  \smoothparam)$ or $N(0, u^2 I_{d \times d})$, there exists an
  $\lipobj$-Lipschitz continuous function $f$ and dimension independent
  constant $c > 0$ such that
  \begin{equation*}
    \erroruniform(f) \errorgrad(f) \ge c \lipobj^2 \sqrt{d}.
  \end{equation*}
\end{lemma}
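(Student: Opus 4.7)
The plan is to exhibit a single explicit $\lipobj$-Lipschitz function that forces both factors to be large simultaneously, and then to lower bound $\erroruniform$ by evaluating $\smoothedfunc(0)$ and $\errorgrad$ by lower bounding the operator norm of the Hessian $\nabla^2 \smoothedfunc(0)$. I take
\begin{equation*}
  f(x) \defn \tfrac{\lipobj}{2}\,|x_1| + \tfrac{\lipobj}{2}\,\ltwo{x},
\end{equation*}
which is $\lipobj$-Lipschitz with respect to $\ltwo{\cdot}$ because each summand is $(\lipobj/2)$-Lipschitz. Intuitively, the $|x_1|$-piece creates a direction along $e_1$ in which the smoothed Hessian is as large as the marginal density of $u Z_1$ at zero, while the $\ltwo{\cdot}$-piece ensures that $\smoothedfunc(0) - f(0)$ is large because $\E[\ltwo{uZ}]$ is large.

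For the uniform-error lower bound, I will use that $f(0)=0$ and $f \ge 0$, so
\begin{equation*}
  \erroruniform(f) \;\ge\; \smoothedfunc(0) \;\ge\; \tfrac{\lipobj}{2}\,\E[\ltwo{uZ}].
\end{equation*}
A one-dimensional integral against the radial density gives $\E[\ltwo{Z}] = d/(d+1) \ge 1/2$ when $Z$ is uniform on $B_2(0,1)$, and standard Stirling bounds applied to $\E[\ltwo{Z}] = \sqrt{2}\,\Gamma((d+1)/2)/\Gamma(d/2)$ give $\E[\ltwo{Z}] \ge c\sqrt{d}$ when $Z \sim N(0, I_d)$. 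Hence $\erroruniform(f) \ge c\lipobj u$ in the ball case and $\erroruniform(f) \ge c\lipobj u\sqrt{d}$ in the Gaussian case.

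For the gradient-Lipschitz lower bound, I will use that any eigenvalue of $\nabla^2 \smoothedfunc(0)$ is dominated by the Lipschitz constant of $\nabla \smoothedfunc$, so $\errorgrad(f) \ge \|\nabla^2 \smoothedfunc(0)\|_{\rm op}$. Since smoothing commutes with differentiation (as $\smoothingdist$ is a density with respect to Lebesgue measure), a direct calculation yields
\begin{equation*}
  \nabla^2 \smoothedfunc(0) \;=\; \lipobj\, p_{uZ_1}(0)\, e_1 e_1^{\!\top}
  \;+\; \tfrac{\lipobj}{2u}\cdot\tfrac{d-1}{d}\,\E[\ltwo{Z}^{-1}]\,I,
\end{equation*}
where I use spherical symmetry to diagonalize the Hessian contribution from the $\ltwo{\cdot}$-piece. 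The $e_1$-eigenvalue is therefore at least $\lipobj\, p_{uZ_1}(0)$. For $Z$ uniform on $B_2(0,1)$, the marginal density at zero equals $V_{d-1}/(uV_d) \ge c\sqrt{d}/u$ by Stirling, giving $\errorgrad(f) \ge c\lipobj\sqrt{d}/u$; for $Z \sim N(0, I_d)$, $p_{uZ_1}(0) = 1/(u\sqrt{2\pi})$, giving $\errorgrad(f) \ge c\lipobj/u$. Multiplying the two lower bounds in each case produces $\erroruniform(f)\,\errorgrad(f) \ge c\lipobj^2\sqrt{d}$, as claimed.

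The main technical obstacle is verifying the dimensional Stirling asymptotics uniformly in $d$, namely that both $V_{d-1}/V_d \asymp \sqrt{d}$ and $\Gamma((d+1)/2)/\Gamma(d/2) \asymp \sqrt{d}$, which reduce to the elementary bound $\Gamma(x+1/2)/\Gamma(x) = \sqrt{x}(1+O(1/x))$. A secondary concern is that in the uniform-ball case $\smoothingdist$ has a boundary discontinuity, so I must check that $\nabla^2 \smoothedfunc(0)$ is well-defined; this follows because the marginal $p_{uZ_1}$ is continuous at the origin for every $d \ge 2$, and because the smoothed $\ltwo{\cdot}$ is $C^2$ by Lemma~\ref{lemma:ltwo}.
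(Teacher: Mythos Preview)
Your proposal is correct and structurally close to the paper's proof: both take $f$ to be the average of $\lipobj\ltwo{x}$ (which witnesses $\erroruniform$) and a one-dimensional absolute value (which witnesses $\errorgrad$), and the $\erroruniform$ bounds you obtain are identical to the paper's. The difference is in how you certify the $\errorgrad$ lower bound. The paper invokes the tightness case of Lemma~\ref{lemma:lipschitz-gradient}, reducing to the total-variation quantity $\int|\smoothingdist(z)-\smoothingdist(z-y)|\,dz$ and then estimating this for small $\ltwo{y}$ in each case. You instead read off the $e_1$-eigenvalue of $\nabla^2\smoothedfunc(0)$ as $\lipobj\,p_{uZ_1}(0)$. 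These are in fact the same computation: for rotationally symmetric, radially nonincreasing $\smoothingdist$ one has $\int|\smoothingdist(z)-\smoothingdist(z-te_1)|\,dz = 2\bigl(F_{uZ_1}(t/2)-F_{uZ_1}(-t/2)\bigr) \sim 2t\,p_{uZ_1}(0)$, so the paper's $d!!/(d-1)!!$ and your $V_{d-1}/V_d$ are the same $\Theta(\sqrt{d})$ constant. Your route is a bit more elementary in that it avoids the auxiliary lemma; the paper's route makes explicit that the same one-dimensional witness saturates the general upper bound~\eqref{eqn:lipschitz-grad-bound}.

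One small point to tighten: Lemma~\ref{lemma:ltwo} only gives that the smoothed $\ltwo{\cdot}$ is $C^{1,1}$, not $C^2$, so your appeal to it for the existence of $\nabla^2\smoothedfunc(0)$ is not quite right in the uniform-ball case. This is harmless for your argument, though: since the smoothed $\ltwo{\cdot}$-piece is convex with vanishing gradient at $0$, its contribution to $e_1^\top\bigl(\nabla\smoothedfunc(te_1)-\nabla\smoothedfunc(0)\bigr)$ is nonnegative, so the $|x_1|$-piece alone---whose smoothed version \emph{is} $C^2$ near $0$ because $p_{uZ_1}$ is continuous there for every $d\ge 1$---already yields $\errorgrad(f)\ge \lipobj\,p_{uZ_1}(0)$ via the difference quotient. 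Stating it this way removes the need to justify twice-differentiability of the $\ltwo{\cdot}$-term.
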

\begin{remark}
  The importance of the above bound is made clear by inspecting the
  convergence guarantee of Theorem~\ref{theorem:main-theorem-no-horizon}. The
  terms $\lipgrad$ and $\lipobj$ in the bound~\eqref{EqnNoHorizon} can be
  replaced with $\errorgrad(f)$ and $\erroruniform(f)$, respectively.
  Minimizing over $\smoothparam$, we see that the leading term in the
  convergence guarantee~\eqref{EqnNoHorizon} is of order
  $\frac{\sqrt{\errorgrad(f) \erroruniform(f) \prox(x^*)}}{T}
  \ge \frac{c\lipobj d^{1/4} \sqrt{\prox(x^*)}}{T}$. In
  particular, this result shows that our analysis of the dimension dependence
  of the randomized smoothing in Lemmas~\ref{lemma:ltwo}
  and~\ref{lemma:normal} is sharp and cannot be improved by
  more than a constant factor
  (see also Corollaries~\ref{corollary:variance-rate-ltwo}
  and~\ref{corollary:variance-rate-normal}).
\end{remark}

\subsection{Proof of smoothing lemmas}

The following technical lemma is a building block for our results;
we provide a proof in Sec.~\ref{sec:proof-of-lemma-lipschitz-gradient}.
\begin{lemma}
  \label{lemma:lipschitz-gradient}
  Let $f$ be convex and $\lipobj$-Lipschitz continuous with respect to a norm
  $\norm{\cdot}$ over the domain $\supp \smoothingdist + \xdomain$. Let
  $Z$ be distributed according to the distribution $\smoothingdist$. Then
  \begin{equation}
    \label{eqn:lipschitz-grad-bound}
    \dnorm{\nabla \smoothedfunc(x) - \nabla \smoothedfunc(y)}
    = \dnorm{\E\left[\nabla f(x + Z) + \nabla f(y + Z)\right]}
    \le \lipobj \int |\smoothingdist(z - x) - \smoothingdist(z - y))| dz.
  \end{equation}
  If the norm $\norm{\cdot}$ is the $\ell_2$-norm and the density
  $\smoothingdist(z)$ is rotationally symmetric and non-increasing
  as a function of $\ltwo{z}$, the
  bound~\eqref{eqn:lipschitz-grad-bound} is tight; specifically, it is
  attained by the function
  \begin{equation*}
    f(x) = \lipobj\left|\<\frac{y}{\ltwo{y}}, x\> - \half\right|.
  \end{equation*}
\end{lemma}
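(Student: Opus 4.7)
The plan is to reduce the gradient difference to a single integral of $\nabla f$ against the \emph{signed density difference} $\smoothingdist(\cdot - x) - \smoothingdist(\cdot - y)$, and then peel off the dual norm using duality and the Lipschitz bound $\dnorm{\nabla f} \le \lipobj$.

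First I would justify differentiation under the integral: starting from $\smoothedfunc(x) = \int f(x+z)\smoothingdist(z)\,dz$, the Lipschitz hypothesis supplies a uniform bound $|f(x + z + he_i) - f(x+z)|/|h| \le \lipobj$ on the difference quotients, so dominated convergence yields
\[
\nabla \smoothedfunc(x) = \int \nabla f(x+z)\,\smoothingdist(z)\,dz,
\]
where $\nabla f$ exists Lebesgue-almost everywhere by convexity. Next I would apply the linear change of variables $w = x+z$ (and analogously for $y$) and subtract, obtaining the key identity
\[
\nabla \smoothedfunc(x) - \nabla \smoothedfunc(y) = \int \nabla f(w)\bigl[\smoothingdist(w - x) - \smoothingdist(w - y)\bigr]\,dw.
\]
To convert this to the claimed bound, I would use the variational formula $\dnorm{v} = \sup_{\norm{u} \le 1}\<u,v\>$: for any $u$ with $\norm{u} \le 1$,
\[
\Bigl\<u,\int \nabla f(w)\,h(w)\,dw\Bigr\> \le \int \dnorm{\nabla f(w)}\,|h(w)|\,dw \le \lipobj \int |\smoothingdist(w-x) - \smoothingdist(w-y)|\,dw,
\]
where $h(w) = \smoothingdist(w-x) - \smoothingdist(w-y)$. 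Taking the supremum over $u$ yields the inequality~\eqref{eqn:lipschitz-grad-bound}.

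For the tightness claim, set $u = y/\ltwo{y}$ and consider $f(x) = \lipobj \bigl|\<u,x\> - \tfrac{1}{2}\bigr|$, whose subgradient is $\lipobj\sign(\<u,w\> - \tfrac{1}{2})\,u$ (defined a.e.). Picking the specific pair $(x_0,y_0) = (0,u)$, the identity above reduces to
\[
\nabla\smoothedfunc(0) - \nabla\smoothedfunc(u) = \lipobj\,u\int \sign(\<u,w\> - \tfrac{1}{2})\bigl[\smoothingdist(w) - \smoothingdist(w - u)\bigr]\,dw.
\]
Under the hypothesis that $\smoothingdist$ is rotationally symmetric and radially non-increasing, a direct computation gives $\sign(\smoothingdist(w) - \smoothingdist(w-u)) = \sign(\ltwo{w-u}^2 - \ltwo{w}^2) = \sign(\tfrac{1}{2} - \<u,w\>)$, which is exactly the negative of $\sign(\<u,w\> - \tfrac{1}{2})$. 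Thus the integrand is pointwise nonpositive and its absolute value coincides with the integrand of the bound, producing equality; this also verifies that $\dnorm{\nabla f} = \lipobj$ on the support a.e., so none of the intermediate inequalities is lossy.

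The only real subtlety I expect is the differentiation-under-the-integral step combined with passing from $\nabla f$ (only defined a.e.) to an estimate that holds everywhere in the integrand. Convexity plus the Lipschitz bound, together with absolute continuity of $\smoothingdist$ (so that the null set on which $\nabla f$ fails to exist is harmless after convolution), handles this: difference quotients of $f$ converge a.e.\ to the (a.e.-defined) gradient and are uniformly bounded by $\lipobj$, which is the dominating function required. Everything else is a change of variables and the duality pairing, both of which are routine.
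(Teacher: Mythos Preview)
Your proposal is correct and follows essentially the same route as the paper: rewrite $\nabla\smoothedfunc(x)-\nabla\smoothedfunc(y)$ as $\int \nabla f(w)[\smoothingdist(w-x)-\smoothingdist(w-y)]\,dw$ via a change of variables, bound the dual norm using $\dnorm{\nabla f}\le\lipobj$, and for tightness exploit that the sign of the density difference coincides (up to a fixed sign) with the sign of $\langle u,w\rangle-\tfrac12$ when $\smoothingdist$ is radially symmetric and nonincreasing. The paper phrases the norm bound by splitting the domain into $\{\smoothingdist(z)>\smoothingdist(z-y)\}$ and its complement rather than pairing with a test vector $u$, but this is the same estimate; your added justification of differentiation under the integral is a welcome detail the paper omits.
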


\subsubsection{Proof of Lemma~\ref{lemma:linf}}

Throughout, we let $Z \sim \smoothingdist$, where $\smoothingdist$ is
the uniform density on $B_\infty(0, u))$, and $h_\smoothparam(x)$
denote the (shifted) Huber loss
\begin{equation}
\label{EqnHuber}
h_\smoothparam(x) = \left\{\begin{array}{ll} \frac{x^2}{2u} +
\frac{u}{2} & {\rm for~} x \in [-u, u] \\ |x| & {\rm
  otherwise.} \end{array}\right.
\end{equation}
Now we prove each of the parts of the lemma in turn.
\begin{enumerate}[(i)]
\item Since $\E[Z] = 0$, Jensen's inequality shows $f(x) = f(x + \E[Z]) \le
  \E[f(x + Z)] = \smoothedfunc(x)$, by definition of
  $\smoothedfunc$. Now recall the definition of $\norm{\partial f(x)} =
  \sup\{\norm{g} \mid g \in \partial f(x)\}$ from the introduction. To get the
  upper uniform bound, note first that by assumption, $f$ is
  $\lipobj$-Lipschitz continuous over $\xdomain + B_\infty(0, u)$, since by
  assumption
  \begin{equation*}
    \linf{\partial f(x)} \le \E[\linf{\partial F(x; \statsample)}] \le
    \sqrt{\E[\linf{\partial F(x; \statsample)}^2]} \le \lipobj,
  \end{equation*}
  again
  using Jensen's inequality. Thus $f$ is $\lipobj$-Lipschitz with
respect to the $\ell_1$-norm,
\begin{equation*} \smoothedfunc(x) =
\E[f(x + Z)] \le \E[f(x)] + \lipobj\E[\lone{Z}] = f(x) +
\frac{d\lipobj u}{2}.
 \end{equation*}
 To see that the estimate is tight, note that for $f(x) = \lone{x}$,
 we have $\smoothedfunc(x) = \sum_{i=1}^d h_\smoothparam(x_i)$,
 where $h_\smoothparam$ is the shifted Huber loss~\eqref{EqnHuber},
 and $\smoothedfunc(0) = du/2$, while $f(0) = 0$.
\item We now prove that $\smoothedfunc$ is $\lipobj$-Lipschitz with
  respect to $\lone{\cdot}$.  Under the stated conditions, we have
  $\partial f(x) = \E[ \partial F(x; \statsample)]$, which shows that
  $\linf{\partial f(x)}^2 \le \E[\linf{\partial F(x; \statsample)}^2]
  \le \lipobj^2$.  Thus, we obtain the upper bound
 \begin{equation*}
  \linf{\nabla \smoothedfunc(x)} = \linf{\E[\nabla f(x + Z)]} \le
  \E[\linf{\nabla f(x + Z)}] \le \lipobj.
 \end{equation*}
  Tightness follows again by considering $f(x) = \lone{x}$, where
  $\lipobj = 1$.
\item Recall that differentiability is directly implied by earlier work of
  Bertsekas~\cite{Bertsekas73}. Since $f$ is a.e.-differentiable, we have
  $\nabla \smoothedfunc(x) = \E[\nabla f(x + Z)]$ for $Z$ uniform on
  $[-\smoothparam, \smoothparam]^d$. We now establish Lipschitz continuity of
  $\nabla \smoothedfunc(x)$.

  For a fixed pair $x, y \in \xdomain + B_\infty(0, \smoothparam)$, we
  have from Lemma~\ref{lemma:lipschitz-gradient}
  \begin{align*}
    \linf{\E[\nabla f(x + Z)] - \E[\nabla f(y + Z)]}
    & \le \lipobj \cdot \frac{1}{(2\smoothparam)^d}
    \lebesgue \big(B_\infty(x, u) \setdiff B_\infty(y, u)\big),
  \end{align*}
  where $\lebesgue$ denotes Lebesgue measure and $\setdiff$ denotes
  the symmetric set-difference. By a straightforward geometric
  calculation, we see that
  \begin{equation}
    \label{eqn:measure-infinity-difference}
    \lebesgue\left(B_\infty(x, u) \setdiff B_\infty(y, u)\right)
    = 2 \bigg((2u)^d - \prod_{i=1}^d \hinge{2u - |x_i - y_i|}\bigg).
  \end{equation}
  To control the volume term~\eqref{eqn:measure-infinity-difference} and
  complete the proof, we need an auxiliary lemma (which we prove at the end of
  this subsection).
  \begin{lemma}
    \label{lemma:linf-optimization}
    Let $a \in \R^d_+$ and $u \in \R_+$. Then $\prod_{i=1}^d \hinge{u - a_i}
    \ge u^d - \lone{a} u^{d-1}$.
  \end{lemma}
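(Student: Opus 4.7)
The plan is to split into two simple cases based on the size of $\lone{a}$ relative to $u$, and then handle the remaining case by induction on $d$. This will keep the positivity issues (the $\hinge{\cdot}$ operation) entirely in the easy case.

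First, suppose $\lone{a} \ge u$. Then $u^d - \lone{a}u^{d-1} = u^{d-1}(u - \lone{a}) \le 0$, while the left-hand side is a product of non-negative numbers and is therefore $\ge 0$. So the inequality holds trivially in this regime.

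Next, assume $\lone{a} < u$. Since every $a_i$ is non-negative, we have $a_i \le \lone{a} < u$, so $\hinge{u - a_i} = u - a_i > 0$ for each $i$, and the claim reduces to showing that
\begin{equation*}
  \prod_{i=1}^d (u - a_i) \ge u^d - \lone{a}\, u^{d-1}.
\end{equation*}
I would prove this by induction on $d$. The base case $d = 1$ is an equality. For the inductive step, assuming the bound holds for $d - 1$ coordinates, multiply both sides of the inductive hypothesis by the non-negative quantity $(u - a_d)$; this yields
\begin{equation*}
  \prod_{i=1}^d (u - a_i) \ge (u - a_d)\Big(u^{d-1} - \sum_{i < d} a_i \, u^{d-2}\Big)
  = u^d - \lone{a}\, u^{d-1} + a_d \Big(\sum_{i<d} a_i\Big) u^{d-2},
\end{equation*}
and the trailing correction term is non-negative because $a_i, u \ge 0$.

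I do not anticipate a serious obstacle: the only mildly subtle point is remembering that $(u - a_d)$ must be non-negative before multiplying through the induction hypothesis, which is precisely why the case split at the start is set up with $\lone{a} < u$ (guaranteeing $a_d < u$). An alternative proof by expanding $\prod(u-a_i)$ via elementary symmetric polynomials would require a Bonferroni-type argument to control the alternating tail, which seems strictly less pleasant than the two-line induction above.
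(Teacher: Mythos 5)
Your proof is correct. You handle the degenerate regime $\lone{a} \ge u$ exactly as the paper does (the right-hand side is nonpositive while the left is nonnegative), but in the main regime $\lone{a} < u$ you proceed by induction on $d$: multiply the $(d-1)$-variable bound by the nonnegative factor $(u - a_d)$ and observe that the cross term $a_d \bigl(\sum_{i<d} a_i\bigr) u^{d-2}$ is nonnegative. The paper instead sets up a constrained minimization of $\prod_i (u - a_i)_+$ over the $\ell_1$-ball $\{a \succeq 0, \lone{a} \le c\}$, takes logarithms, and uses a first-order perturbation argument to show that any minimizer with two strictly positive coordinates can be strictly improved, so the minimum is attained at a vertex $a = c\, e_i$, where the product equals $(u - c)u^{d-1}$. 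Both arguments deliver the same bound; yours is more elementary and self-contained, avoiding the paper's appeal to existence of a minimizer and the informal ``increase $a_i$ slightly, decrease $a_j$ slightly'' reasoning, whereas the paper's variational route identifies the extremal configuration explicitly, which can be conceptually useful even though it is slightly heavier machinery for such a simple inequality.
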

  The volume~\eqref{eqn:measure-infinity-difference} is
  easy to control using Lemma~\ref{lemma:linf-optimization}. Indeed, we have
  \begin{equation*}
    \half \lebesgue\left(B_\infty(x, u) \setdiff B_\infty(y, u)\right)
    \le (2u)^d - (2u)^d + \lone{x - y} (2u)^{d-1},
  \end{equation*}
  which implies the desired result, that is, that
  \begin{equation*}
    \linf{\E[\nabla f(x + Z)] - \E \nabla[f(y + Z)]}
    \le \frac{\lipobj \lone{x - y}}{u}.
  \end{equation*}
  To see the tightness claimed in the proposition, consider as usual $f(x) =
  \lone{x}$ and let $e_i$ denote the $i$th standard basis vector. Then
  $\lipobj = 1$, $\nabla \smoothedfunc(0) = 0$, $\nabla
  \smoothedfunc(u e_i) = e_i$, and $\linf{\nabla \smoothedfunc(0) -
    \nabla \smoothedfunc(u e_i)} = 1 = \frac{\lipobj}{u} \lone{0 - u
    e_i}$.
\item
  The equality $\E[\nabla F(x + Z; \statsample)] = \nabla
  \smoothedfunc(x)$ follows from Fubini's theorem. The second
  statement is simply a consequence of the triangle inequality.
  Finally, the tightness follows from the following one-dimensional
  example. Let $f(x) = \lipobj|x|$ for $x \in \R$ and $\lipobj >
  0$. Then $\smoothedfunc(x)$ is $\lipobj$ times the Huber loss
  $h_\smoothparam(x)$ defined earlier, and $f'_\smoothingdist(0) =
  0$. Thus for $Z$ uniform on $[-\smoothparam, \smoothparam]$,
  \begin{equation*}
    \E (\smoothedfunc'(0) - f'(Z))^2
    = \E[\lipobj^2 \sign(Z)^2] = \lipobj^2,
  \end{equation*}
  which is the Lipschitz constant of $f$.
\end{enumerate}

\begin{proof-of-lemma}[\ref{lemma:linf-optimization}]
  We begin by noting that the statement of the lemma trivially holds
  whenever $\lone{a} \ge u$, as the right hand side of the inequality
  is then non-positive. Now, fix some $c < u$, and consider the
  problem
  \begin{equation}
    \min_a ~ \prod_{i=1}^d (u - a_i)_+
    ~~~ {\rm s.t.} ~~~
    a \succeq 0, \lone{a} \le c.
    \label{eqn:linf-optimization}
  \end{equation}
  We show that the minimum is achieved when one index is set to $a_i =
  c$ and the rest to $0$. Indeed, suppose for the sake of contradiction
  that $\tilde{a}$ is the solution to \eqref{eqn:linf-optimization}
  but that there are indices $i, j$ with $a_i \ge a_j > 0$, that is,
  at least two non-zero indices.
  By taking a logarithm, it is clear that minimizing
  the objective~\eqref{eqn:linf-optimization} is equivalent to
  minimizing $\sum_{i=1}^d \log(u - a_i)$. Taking the derivative of
  $\log(u - a_i)$ for $i$ and $j$, we see that
  \begin{equation*}
    \frac{\partial}{\partial a_i} \log(u - a_i)
    = \frac{-1}{u - a_i}
    \le \frac{-1}{u - a_j} = \frac{\partial}{\partial a_j} \log(u - a_j).
  \end{equation*}
  Since $\frac{-1}{u - a}$ is decreasing function of $a$, increasing $a_i$
  slightly and decreasing $a_j$ slightly causes $\log(u - a_i)$ to decrease
  faster than $\log(u - a_j)$ increases, thus decreasing the overall
  objective. This is the desired contradiction.
%
\end{proof-of-lemma}


\subsubsection{Proof of Lemma~\ref{lemma:linf-to-ltwo}}

The proof of this lemma is nearly identical to the proof of
Lemma~\ref{lemma:linf}, though we replace $\linf{\cdot}$ norms with
$\ltwo{\cdot}$. We prove each of the statements in turn, and
throughout let $Z$ denote a variable distributed uniformly on
$B_\infty(0, \smoothparam)$.
\begin{enumerate}[(i)]
\item Jensen's inequality implies that $f(x) = f(x + \E[Z]) \le \E[f(x
  + Z)] = \smoothedfunc(x)$. For the upper bound on
  $\smoothedfunc$, use the Lipschitz continuity of $f$ and Jensen's
  inequality to see that
    \begin{equation*}
      \smoothedfunc(x) \le f(x) + \lipobj \E[\ltwo{Z}] \le f(x) +
      \lipobj \sqrt{\E[\ltwo{Z}^2]} = f(x) + \lipobj \sqrt{\frac{d
          \smoothparam^2}{3}}.
    \end{equation*}
  \item As earlier, since $\E[\nabla f(x + Z)] = \nabla \smoothedfunc(x)$,
    we have $\ltwo{\E[\nabla f(x + Z)]} \le \E[\ltwo{\nabla f(x + Z)}] \le
    \lipobj$.
  \item Using the same sequence of steps as in the proof of part (iii) in
    Lemma~\ref{lemma:linf}, we see that
    \begin{align*}
      \ltwo{\nabla \smoothedfunc(x) - \nabla \smoothedfunc(y)}
      & \le \frac{1}{(2\smoothparam)^d}
      \lipobj \lebesgue\left(B_\infty(x, \smoothparam)
      \setdiff B_\infty(y, \smoothparam)\right) \\
      & \le \frac{2}{(2\smoothparam)^d} \lipobj (2\smoothparam)^{d-1}
      \lone{x - y}
      \le \frac{\lipobj \sqrt{d}}{\smoothparam} \ltwo{x - y}.
    \end{align*}
  \item As in the proof of Lemma~\ref{lemma:linf}, Fubini's theorem
    implies the first part of the statement, while the second part
    is a consequence of the fact that
    \begin{equation*}
      \E[\ltwo{\nabla \smoothedfunc(x) - \nabla F(x + Z; \statsample)}^2]
      = \E[\ltwo{\nabla F(x + Z; \statsample)}^2] -
      \ltwo{\nabla \smoothedfunc(x)}^2
      \le \lipobj^2
    \end{equation*}
    by the assumptions on $F$. Tightness follows from considering the
    one dimensional function $f(x) = |x|$ as earlier.
  \end{enumerate}

\subsubsection{Proof of Lemma~\ref{lemma:normal}}
\newcommand{\normalintegral}{I_2}

Throughout this proof, we use $Z$ to denote a random variable
distributed as $N(0, \smoothparam^2 I)$.
\begin{enumerate}[(i)]
\item As in the earlier lemmas, Jensen's inequality gives $f(x) = f(x + \E
  Z) \le \E f(x + Z) = \smoothedfunc(x)$. Our assumption on $\partial
  F(\cdot; \statsample)$ implies that $f$ is $\lipobj$-Lipschitz, so
  \begin{equation*}
    \smoothedfunc(x) = \E[f(x + Z)]
    \le \E[f(x)] + \lipobj \E[\ltwo{Z}]
    \le f(x) + \lipobj \sqrt{\E[\ltwo{Z}^2]}
    = f(x) + \lipobj \smoothparam \sqrt{d}.
  \end{equation*}
\item This proof is analogous to that of part (ii) of
  Lemmas~\ref{lemma:linf} and~\ref{lemma:linf-to-ltwo}.  The
  tightness of the Lipschitz constant can be verified by taking
  $f(x) = \<v, x\>$ for $v \in \R^d$, in which case
  $\smoothedfunc(x) = f(x)$, and both have gradient $v$.
\item Now we show that $\nabla \smoothedfunc$ is Lipschitz continuous.
  Indeed, applying Lemma~\ref{lemma:lipschitz-gradient} we have
  \begin{align}
    \ltwo{\nabla \smoothedfunc(x) - \nabla \smoothedfunc(y)}
    & \le \lipobj \underbrace{\int |\smoothingdist(z - x)
      - \smoothingdist(z - y)| dz.}_{\normalintegral}
    \label{eqn:difference-probability-measures}
  \end{align}
  What remains is to control the integral
  term~\eqref{eqn:difference-probability-measures}, denoted $\normalintegral$.

  In order to do so, we follow a technique used by Lakshmanan and Pucci de
  Farias~\cite{LakshmananFa08}.  Since $\smoothingdist$ satisfies
  $\smoothingdist(z - x) \ge \smoothingdist(z - y)$ if and only if $\ltwo{z -
    x} \ge \ltwo{z - y}$, we have
  \begin{align*}
    \normalintegral & = \int|\smoothingdist(z - x) - \smoothingdist(z - y)| dz
    = 2 \int_{z : \ltwo{z - x} \le \ltwo{z - y}} (\smoothingdist(z - x)
    - \smoothingdist(z - y)) dz.
  \end{align*}
  By making the change of variable $w = z - x$ for the $\smoothingdist(z - x)$
  term in $\normalintegral$ and $w = z - y$ for $\smoothingdist(z - y)$,
  we rewrite $\normalintegral$ as
  \begin{align*}
    \normalintegral & = 2 \int_{w : \ltwo{w} \le \ltwo{w -
        (x - y)}} \smoothingdist(w) dw - 2 \int_{w : \ltwo{w} \ge \ltwo{w
        - (x - y)}} \smoothingdist(w) dw \\
    & = 2\P_\smoothingdist(\ltwo{Z} \le \ltwo{Z - (x - y)}) -
    2\P_\smoothingdist(\ltwo{Z} \ge \ltwo{Z - (x - y)})
  \end{align*}
  where $\P_\smoothingdist$ denotes probability according to the density
  $\smoothingdist$. Squaring the terms inside the probability bounds, we note
  that
  \begin{align*}
    \P_\smoothingdist\left(\ltwo{Z}^2 \le \ltwo{Z - (x - y)}^2\right)
    & = \P_\smoothingdist\left(2 \<Z, x - y\> \le \ltwo{x - y}^2\right) \\
    & = \P_\smoothingdist\left(2 \<Z, \frac{x - y}{\ltwo{x - y}}\>
    \le \ltwo{x - y}\right)
  \end{align*}
  
  Since $(x - y) / \ltwo{x - y}$ has norm 1 and $Z \sim N(0, \smoothparam^2
  I)$ is rotationally invariant, the random variable $W = \<Z, \frac{x -
    y}{\ltwo{x - y}}\>$ has distribution $N(0, \smoothparam^2)$.
  Consequently, we have
  \begin{align*}
    \frac{\normalintegral}{2}
    & =  \lefteqn{\P\left(W \le \ltwo{x - y}/2\right)
      - \P \left(W \ge \ltwo{x - y}/2 \right)} \\
    & = \int_{-\infty}^{\ltwo{x - y}/2} \frac{1}{\sqrt{2\pi \smoothparam^2}}
    \exp(-w^2 / (2\smoothparam^2)) dw - \int_{\ltwo{x - y}/2}^\infty
    \frac{1}{\sqrt{2\pi \smoothparam^2}} \exp(-w^2 / (2\smoothparam^2)) dw \\
    & \le \frac{1}{\smoothparam \sqrt{2\pi}} \ltwo{x - y},
  \end{align*}
  where we have exploited symmetry and the inequality $\exp(-w^2) \le
  1$. Combining this bound with the earlier
  inequality~\eqref{eqn:difference-probability-measures}, we have
  \begin{equation*}
    \ltwo{\nabla \smoothedfunc(x) - \nabla \smoothedfunc(y)}
    \le \frac{2 \lipobj}{\smoothparam \sqrt{2 \pi}} \ltwo{x - y}
    \le \frac{\lipobj}{\smoothparam} \ltwo{x - y}.
  \end{equation*}
\item The proof of the variance bound is completely identical to that for
  Lemma~\ref{lemma:linf-to-ltwo}.
\end{enumerate}

That each of the bounds above is tight is a consequence of
Lemma~\ref{lemma:smoothing-sharp}.

\subsubsection{Proof of Lemma~\ref{lemma:smoothing-sharp}}

Throughout this proof, $c$ will denote a dimension independent constant and
may change from line to line and inequality to inequality. We will show the
result holds by considering a convex combination of ``difficult'' functions,
in this case $f_1(x) = \lipobj \ltwo{x}$ and $f_2(x) = \lipobj \left|\<x, y /
\ltwo{y}\> - 1/2\right|$, and choosing $f = \half f_1 + \half f_2$.
Our first step in the proof will be to control $\erroruniform$.

By definition of the constant $\erroruniform$ in
Eq.~\eqref{eqn:def-error-uniform}, for any convex $f_1$ and $f_2$ we have
$\erroruniform(\half f_1 + \half f_2) \ge \half \max\{\erroruniform(f_1),
\erroruniform(f_2)\}$. Thus for $Z \sim N(0, \smoothparam^2 I_{d \times d})$
we have $\E[f_1(Z)] \ge c\lipobj \smoothparam \sqrt{d}$,
i.e.\ $\erroruniform(f) \ge c \lipobj \smoothparam \sqrt{d}$, and for $Z$
uniform on $B_2(0, \smoothparam)$, we have $\E[f_1(Z)] \ge c \lipobj
\smoothparam$, i.e.\ $\erroruniform(f) \ge c \lipobj \smoothparam$.

Turning to control of $\errorgrad$, we note that for any random variable $Z$
rotationally symmetric about the origin, symmetry implies that
\begin{equation*}
  \E[\nabla f_1(Z + y)] = \lipobj \E\left[\frac{Z + y}{\ltwo{Z + y}}\right]
  = a_z y
\end{equation*}
where $a_z > 0$ is a constant dependent on $Z$.
Thus we have
\begin{equation*}
  \E[\nabla f_1(Z)] - \E[\nabla f_1(Z + y)]
  + \E[\nabla f_2(Z)] - \E[\nabla f_2(Z + y)]
  = 0 - a_z y - \lipobj \frac{y}{\ltwo{y}} \int|\smoothingdist(z)
  - \smoothingdist(z - y)| dz
\end{equation*}
from Lemma~\ref{lemma:lipschitz-gradient}. As a consequence (since $a_z y$
is parallel to $y/\ltwo{y}$), we see that
\begin{equation*}
  \errorgrad\left(\half f_1 + \half f_2\right)
  \ge \half \lipobj \int|\smoothingdist(z) - \smoothingdist(z - y)| dz.
\end{equation*}
So what remains is to lower bound $\int|\smoothingdist(z) - \smoothingdist(z
- y)| dz$ for the uniform and normal distributions. As we saw in the proof
of Lemma~\ref{lemma:normal},
for the normal distribution
\begin{equation*}
  \int|\smoothingdist(z) - \smoothingdist(z - y)| dz
  = \frac{1}{\smoothparam \sqrt{2\pi}} \int_{-\ltwo{y}/2}^{\ltwo{y}/2}
  \exp(-w^2 / (2\smoothparam^2)) dw
  = \frac{1}{\smoothparam \sqrt{2\pi}} \ltwo{y} + \order\left(
  \frac{\ltwo{y}^2}{\smoothparam}\right).
\end{equation*}
By taking small enough $\ltwo{y}$, we achieve the inequality
$\errorgrad\!\left(\half f_1 + \half f_2\right)
\ge c \frac{\lipobj}{\smoothparam}$
when $Z \sim N(0, \smoothparam^2 I_{d \times d})$.

To show that the bound in the lemma is sharp for the case of the uniform
distribution on $B_2(0, \smoothparam)$, we slightly modify the proof of
Lemma~2 in~\cite{YousefianNeSh12}. In particular, by using a Taylor
expansion instead of first-order convexity in inequality~(11)
of~\cite{YousefianNeSh12}, it is not difficult to show that
\begin{equation*}
  \int|\smoothingdist(z) - \smoothingdist(z - y)| dz
  = \kappa \frac{d!!}{(d - 1)!!} \frac{\ltwo{y}}{\smoothparam}
  + \order\bigg(\frac{d \ltwo{y}^2}{\smoothparam^2}\bigg),
\end{equation*}
where $\kappa = 2/\pi$ if $d$ is even and $1$ otherwise.  Since $d!! / (d -
1)!! = \Theta(\sqrt{d})$, we have proved that for small enough $\ltwo{y}$,
there is a constant $c$ such that $\int|\smoothingdist(z) - \smoothingdist(z
- y)| dz \ge c \sqrt{d} \ltwo{y} / \smoothparam$.

\subsubsection{Proof of Lemma~\ref{lemma:lipschitz-gradient}}
\label{sec:proof-of-lemma-lipschitz-gradient}

Without loss of generality, we assume that $x = 0$ (a linear change of
variables allows this).  Let $g : \R^d \rightarrow \R^d$ be a vector-valued
function such that $\dnorm{g(z)} \le \lipobj$ for all $z \in \{y\} + \supp
\smoothingdist$. Then
\begin{align}
  \E[g(Z) - g(y + Z)]
  & = \int g(z) \smoothingdist(z) dz - \int g(y + z) \smoothingdist(z) dz
  \nonumber \\
  & = \int g(z) \smoothingdist(z) dz - \int g(z) \smoothingdist(z - y) dz
  \nonumber \\
  & = \int_{\gtset} g(z)[\smoothingdist(z) - \smoothingdist(z - y)] dz
  - \int_{\ltset} g(z)[\smoothingdist(z - y) - \smoothingdist(z)] dz
  \label{eqn:split-to-gt-lt}
\end{align}
where $\gtset = \{z \in \R^d \mid \smoothingdist(z) > \smoothingdist(z -
y)\}$ and $\ltset = \{z \in \R^d \mid \smoothingdist(z) < \smoothingdist(z -
y)\}$. It is now clear that when we take norms we have
\begin{align*}
  \dnorm{\E g(Z) - g(y + Z)}
  & \le \sup_{z \in \gtset \cup \ltset} \dnorm{g(z)}
  \left|\int_{\gtset}[\smoothparam(z) - \smoothparam(z - y)]dz
  + \int_{\ltset} [\smoothparam(z - y) - \smoothparam(z)] dz\right| \\
  & \le \lipobj \left|\int_{\gtset} \smoothingdist(z) - \smoothingdist(z - y)
  dz
  + \int_{\ltset}\smoothingdist(z - y) - \smoothingdist(z) dz \right| \\
  & = \lipobj \int |\smoothingdist(z) - \smoothingdist(z - y)| dz.
\end{align*}
Taking $g(z)$ to be an arbitrary element of $\partial f(z)$ completes the
proof of the bound~\eqref{eqn:lipschitz-grad-bound}.

To see that the result is tight when $\smoothingdist$ is rotationally
symmetric and the norm $\norm{\cdot} = \ltwo{\cdot}$, we note the
following. From the equality~\eqref{eqn:split-to-gt-lt}, we see that
$\ltwo{\E[g(Z) - g(y + Z)]}$ is maximized by choosing $g(z) = v$ for $z \in
\gtset$ and $g(z) = -v$ for $z \in \ltset$ for any $v$ such that $\ltwo{v} =
\lipobj$.  Since $\smoothingdist$ is rotationally symmetric and
non-increasing in $\ltwo{z}$,
\begin{align*}
  \gtset & = \left\{z \in \R^d \mid \smoothingdist(z) > \smoothingdist(z - y)
  \right\}
  = \left\{z \in \R^d \mid \ltwo{z}^2 < \ltwo{z - y}^2\right\}
  = \left\{z \in \R^d \mid \<z, y\> < \half \ltwo{y}^2\right\} \\
  \ltset & = \left\{z \in \R^d \mid \smoothingdist(z) < \smoothingdist(z - y)
  \right\}
  = \left\{z \in \R^d \mid \ltwo{z}^2 > \ltwo{z - y}^2\right\}
  = \left\{z \in \R^d \mid \<z, y\> > \half \ltwo{y}^2\right\}.
\end{align*}
So all we need do is find a function $f$ for which there exists $v$
with $\ltwo{v} = \lipobj$, and such that $\partial f(x) = \{v\}$ for $x \in
\gtset$ and $\partial f(x) = \{-v\}$ for $x \in \ltset$. By inspection, the
function $f$ defined in the statement of the lemma satisfies these two
desiderata for $v = \lipobj \frac{y}{\ltwo{y}}$.

\section{Sub-Gaussian and sub-exponential tail bounds}
\label{AppTail}

For reference purposes, we state here some standard definitions and
facts about sub-Gaussian and sub-exponential random variables (see the
books~\cite{BuldyginKo00,LedouxTa91,VanDerVaartWe96} for further
details).

\subsection{Sub-Gaussian variables}

This class of random variables is characterized by a quadratic upper
bound on the moment generating function:
\begin{definition}
  \label{definition:sub-Gaussian}
  A zero-mean random variable $X$ is called \emph{sub-Gaussian with parameter}
  $\sigma^2$ if $\E \exp(\lambda X) \le \exp(\sigma^2
  \lambda^2 / 2)$ for all $\lambda \in \R$.
\end{definition}
\paragraph{Remarks:}  If $X_i$, $i = 1,\ldots, n$ are independent 
sub-Gaussian with parameter $\sigma^2$, it follows from this
definition that $\frac{1}{n} \sum_{i=1}^n X_i$ is sub-Gaussian with
parameter $\sigma^2/ n$.
%
Moreover, it is well-known that any zero-mean random variable $X$
satisfying $|X| \le C$ is sub-Gaussian with parameter $\sigma^2 \le
C^2$. \\

\begin{lemma}[Buldygin and Kozachenko~\cite{BuldyginKo00}, Lemma 1.6]
  \label{lemma:second-order-subg}
  Let $X - \E X$ be sub-Gaussian with parameter $\sigma^2$. Then
  for $s \in [0, 1]$,
  \begin{equation*}
    \E \exp\left(\frac{s X^2}{2 \sigma^2}\right)
    \le \frac{1}{\sqrt{1 - s}} \exp\left(\frac{(\E X)^2}{2 \sigma^2}
    \cdot \frac{s}{1 - s}\right).
  \end{equation*}
\end{lemma}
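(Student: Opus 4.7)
The plan is to use the standard Gaussian integration trick to linearize the quadratic exponent, after which the sub-Gaussian hypothesis on $X - \E X$ can be applied directly. Specifically, for any real $a$ one has the identity $\exp(a^2/2) = \E_Z[\exp(aZ)]$ with $Z \sim N(0,1)$. Applied with $a = \sqrt{s}\,X/\sigma$, and taking $Z$ independent of $X$, this gives
\begin{equation*}
  \E_X \exp\!\Big(\tfrac{s X^2}{2\sigma^2}\Big)
  = \E_X \E_Z \exp\!\Big(\tfrac{\sqrt{s}}{\sigma} Z X\Big)
  = \E_Z \E_X \exp\!\Big(\tfrac{\sqrt{s}}{\sigma} Z X\Big),
\end{equation*}
where the swap of expectations is justified by Fubini/Tonelli since the integrand is nonnegative.

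Next, I would split $X = (X - \E X) + \E X$ inside the inner exponential. Holding $Z$ fixed, the variable $\tfrac{\sqrt{s}}{\sigma} Z$ is deterministic, so applying Definition~\ref{definition:sub-Gaussian} to $X - \E X$ with parameter $\lambda = \tfrac{\sqrt{s}}{\sigma}Z$ yields the pointwise bound
\begin{equation*}
  \E_X \exp\!\Big(\tfrac{\sqrt{s}}{\sigma} Z(X - \E X)\Big)
  \le \exp\!\Big(\tfrac{s Z^2}{2}\Big).
\end{equation*}
Multiplying by the deterministic factor $\exp(\tfrac{\sqrt{s}}{\sigma} Z \,\E X)$ produces the integrand $\exp(\mu Z + \tfrac{s}{2}Z^2)$ where $\mu \defeq \sqrt{s}\,\E X/\sigma$.

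The last step is a one-dimensional Gaussian computation: for $s < 1$, complete the square in
\begin{equation*}
  -\tfrac{z^2}{2} + \mu z + \tfrac{s}{2}z^2
  = -\tfrac{1-s}{2}\Big(z - \tfrac{\mu}{1-s}\Big)^2 + \tfrac{\mu^2}{2(1-s)},
\end{equation*}
and integrate against the standard normal density to obtain
\begin{equation*}
  \E_Z \exp\!\Big(\mu Z + \tfrac{s}{2}Z^2\Big)
  = \tfrac{1}{\sqrt{1-s}}\exp\!\Big(\tfrac{\mu^2}{2(1-s)}\Big).
\end{equation*}
Substituting $\mu^2 = s(\E X)^2/\sigma^2$ gives exactly the claimed bound. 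The case $s = 1$ is either vacuous (both sides infinite) or handled by taking $s \uparrow 1$. There is no real obstacle here; the only subtle point is ensuring $s < 1$ so that the Gaussian integral converges, and tracking that the linearization step preserves the correct dependence on $\E X$ (rather than on $X$ itself), which is why splitting $X = (X-\E X) + \E X$ before applying the sub-Gaussian bound is essential.
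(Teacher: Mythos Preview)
Your proof is correct and follows essentially the same approach as the paper's argument (which in turn follows Buldygin--Kozachenko): both linearize the quadratic exponent via a Gaussian integral, apply the sub-Gaussian MGF bound to $X-\E X$, and complete the square. The only cosmetic difference is that you introduce an auxiliary $Z\sim N(0,1)$ and compute $\E_Z[\exp(\mu Z + \tfrac{s}{2}Z^2)]$, whereas the paper writes the identical computation as an integral $\int \exp(\lambda X - \lambda^2\sigma^2/(2s))\,d\lambda$ and applies Fubini; these are the same calculation up to the change of variables $\lambda = \sqrt{s}\,Z/\sigma$.
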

%
\comment{
  The proof is obvious when $s \in\{0, 1\}$, so we focus on the case when $s
  \in (0, 1)$. Such exponential moments on sub-Gaussian variables are known
  and rely on simple arguments of completing squares in Gaussian
  densities~\cite[Lemma 1.6]{BuldyginKo00}, but we give a proof for
  completeness.  We begin by noting that
  \begin{equation*}
    \E \exp(\lambda X) \le \exp\left(\frac{\lambda^2 \sigma^2}{2}
    + \lambda \E X\right)
  \end{equation*}
  and thus that
  \begin{equation}
    \label{eqn:introduce-s}
    \E \exp\left(\lambda X - \frac{\lambda^2 \sigma^2}{2 s}\right)
    \le \exp\left(\frac{\lambda^2 \sigma^2}{2} - \frac{\lambda^2 \sigma^2}{2s}
    + \lambda \E X \right)
    = \exp\left(\frac{\lambda^2 \sigma^2(s - 1)}{2s}
    + \lambda \E X\right).
  \end{equation}
  Now, we integrate both sides of the inequality~\eqref{eqn:introduce-s} over
  $\lambda \in \R$. Indeed, completing the square on the right hand side,
  for $s \in (0, 1)$ we have
  \begin{align*}
    \int_{-\infty}^\infty
    \exp\left(\frac{\lambda^2 \sigma^2(s - 1)}{2s}
    + \lambda \E X\right) d\lambda
    & = \int_{-\infty}^\infty
    \exp\left(\frac{\sigma^2(s - 1)}{2s}
    \left(\lambda + \frac{s\E X }{\sigma^2 (s - 1)}\right)^2\right)
    \exp\left(-\frac{s(\E X)^2}{2(s - 1) \sigma^2}\right)
    d\lambda \\
    & = \sqrt{\frac{2 \pi s}{1 - s}} \cdot \frac{1}{\sigma} \cdot
    \exp\left(\frac{(\E X)^2}{2\sigma^2} \cdot \frac{s}{1 - s}\right).
  \end{align*}
  On the other hand, integrating the right side of the
  inequality~\eqref{eqn:introduce-s} by using Fubini's theorem similarly gives
  \begin{equation*}
    \int_{-\infty}^\infty \E \exp\left(\lambda X -
    \frac{\lambda^2 \sigma^2}{2s}\right)
    = \frac{\sqrt{2 \pi s}}{\sigma} \E \exp\left(\frac{s X^2}{2 \sigma^2}
    \right).
  \end{equation*}
  Combining the above, we see
  \begin{align*}
    \frac{\sqrt{2\pi s}}{\sigma} \E \exp\left(\frac{s X^2}{2\sigma^2}\right)
    & = \int_{-\infty}^\infty \E \exp\left(\lambda X -
    \frac{\lambda^2 \sigma^2}{2s}\right) \\
    & \le \int_{-\infty}^\infty
    \exp\left(\frac{\lambda^2 \sigma^2(s - 1)}{2s}
    + \lambda \E X\right) d\lambda \\
    & = \sqrt{\frac{2 \pi s}{1 - s}} \cdot \frac{1}{\sigma} \cdot
    \exp\left(\frac{(\E X)^2}{2\sigma^2} \cdot \frac{s}{1 - s}\right).
  \end{align*}
  Multiplying both sides by $\sigma / \sqrt{2\pi s}$ gives the desired result.
\end{proof-of-lemma}
}

\vspace*{.1in}

The maximum of $d$ sub-Gaussian random variables grows logarithmically
in $d$, as shown by the following result:
\begin{lemma}
  \label{lemma:sub-gaussian}
  Let $X \in \R^d$ be a random vector with sub-Gaussian components, each with
  parameter at most $\sigma^2$. Then $\E \linf{X}^2 \le
  \max\{6 \sigma^2 \log d, 2 \sigma^2\}$.
\end{lemma}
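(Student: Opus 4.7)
\textbf{Proof proposal for Lemma~\ref{lemma:sub-gaussian}.} The strategy is standard: combine Jensen's inequality, a union-bound inside the exponential, and the quadratic moment bound of Lemma~\ref{lemma:second-order-subg}; the only subtlety is choosing the free parameter well enough to get the stated constant $6$.

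Since $\linf{X}^2 = \max_{i \in [d]} X_i^2$, Jensen's inequality applied to the convex function $\exp(\lambda \cdot)$ gives, for any $\lambda > 0$,
\begin{equation*}
  \exp\!\bigl(\lambda \,\E \max_i X_i^2\bigr) \,\le\, \E \exp\!\bigl(\lambda \max_i X_i^2\bigr) \,\le\, \sum_{i=1}^d \E \exp(\lambda X_i^2).
\end{equation*}
Because each $X_i$ is mean-zero sub-Gaussian with parameter $\sigma^2$, Lemma~\ref{lemma:second-order-subg} (with $(\E X_i)^2 = 0$ and $s = 2\sigma^2 \lambda$) yields $\E \exp(\lambda X_i^2) \le (1 - 2\sigma^2\lambda)^{-1/2}$, provided $\lambda < 1/(2\sigma^2)$. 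Taking logarithms and dividing by $\lambda$ gives the master inequality
\begin{equation*}
  \E \max_i X_i^2 \,\le\, \frac{\log d}{\lambda} \,-\, \frac{1}{2\lambda}\log(1 - 2\sigma^2 \lambda).
\end{equation*}

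The remaining step is to choose $\lambda$. For $d \ge 2$, I would set $\lambda = (1 - 1/d)/(2\sigma^2)$, which substitutes $1 - 2\sigma^2\lambda = 1/d$ and $1/\lambda = 2\sigma^2/(1 - 1/d) \le 4\sigma^2$. Plugging in,
\begin{equation*}
  \E \max_i X_i^2 \,\le\, \frac{2\sigma^2 \log d}{1 - 1/d} \,+\, \frac{\sigma^2 \log d}{1 - 1/d} \,=\, \frac{3 \sigma^2 \log d}{1 - 1/d} \,\le\, 6\sigma^2 \log d,
\end{equation*}
where the last inequality uses $(1 - 1/d)^{-1} \le 2$ for $d \ge 2$. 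For the degenerate case $d = 1$, the inequality $\log d = 0$ makes the master inequality useless, but one can instead differentiate Lemma~\ref{lemma:second-order-subg} at $s = 0$ (or expand the mgf of $X$ to second order) to obtain $\E X^2 \le \sigma^2 \le 2\sigma^2$. Combining the two cases yields the claimed bound $\E \linf{X}^2 \le \max\{6\sigma^2 \log d,\, 2\sigma^2\}$.

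The only mildly delicate point is the constant: naive choices such as $\lambda = 1/(4\sigma^2)$ give worse factors like $4\sigma^2 \log d + \sigma^2 \log 2$, which is not of the form stated. The choice $s = 1 - 1/d$ is what balances the two $\log d$ terms and recovers the clean constant $6$; the $2\sigma^2$ summand in the max is there exclusively to absorb the $d = 1$ case.
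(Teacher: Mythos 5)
Your proposal is correct, and it has the virtue of being self-contained where the paper is not: the paper does not actually prove Lemma~\ref{lemma:sub-gaussian}, merely remarking that it follows from ``union bounds and Chernoff's inequality'' and pointing the reader to van der Vaart--Wellner or Buldygin--Kozachenko. You instead give a complete argument using only Lemma~\ref{lemma:second-order-subg}, which the paper has already established.

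The route you take is the moment-generating-function version of the argument: bound $\E \exp(\lambda \max_i X_i^2) \le \sum_i \E\exp(\lambda X_i^2)$, control each summand via Lemma~\ref{lemma:second-order-subg}, and undo the exponential with Jensen. The approach the paper gestures at (Chernoff plus union bound) is instead the tail-integration version: bound $\P(\linf{X} \ge t) \le 2d\exp(-t^2/2\sigma^2)$ and integrate $\E\linf{X}^2 = \int_0^\infty \P(\linf{X}^2 \ge u)\,du$ by splitting at $u_0 = 2\sigma^2\log(2d)$. The two are Laplace transforms of each other and give constants of the same order; the mgf route you chose tends to produce a cleaner constant for an expectation bound, and your parameter choice $s = 1 - 1/d$ indeed balances the two $\log d$ contributions to hit exactly $6$ at $d=2$. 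All the steps check out, including the $d=1$ case via $\E X^2 \le \sigma^2$, which follows by matching second-order terms in Definition~\ref{definition:sub-Gaussian} (or, as you say, in Lemma~\ref{lemma:second-order-subg} near $s = 0$). The one thing worth stating explicitly is that the paper's Definition~\ref{definition:sub-Gaussian} makes sub-Gaussian random variables zero-mean by fiat, which is the hypothesis you need to apply Lemma~\ref{lemma:second-order-subg} with $(\E X_i)^2 = 0$; you use this implicitly.
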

\noindent
Using the definition of sub-Gaussianity, the result can be proved by
a combination of union bounds and Chernoff's inequality (see van der
Vaart and Wellner~\cite[Lemma 2.2.2]{VanDerVaartWe96} or Buldygin and
Kozachenko~\cite[Chapter II]{BuldyginKo00} for details). \\

\noindent The following martingale-based bound for variables with
conditionally sub-Gaussian behavior is essentially
standard~\cite{Azuma67,Hoeffding63,BuldyginKo00}.
\begin{lemma}[Azuma-Hoeffding]
  Let $X_i$ be a martingale difference sequence adapted to the
  filtration $\mc{F}_i$, and assume that each $X_i$ is conditionally
  sub-Gaussian with parameter $\sigma_i^2$, meaning that
  $\E[\exp(\lambda X_i) \mid \mc{F}_{i-1}] \le \exp(\lambda^2 \sigma_i^2
  / 2)$. Then for all $\epsilon > 0$,
  \begin{equation}
    \P \bigg[\ninv \sum_{i=1}^n X_i \ge \epsilon \biggr] \le \exp
    \biggr(-\frac{n \epsilon^2}{2 \sum_{i=1}^n \sigma_i^2/n} \biggr).
    \label{eqn:hoeffding-azuma}
  \end{equation}
\end{lemma}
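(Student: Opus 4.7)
The plan is the standard Chernoff-plus-tower-property argument for sums of conditionally sub-Gaussian martingale increments, adapted from the classical Azuma-Hoeffding derivation. Let $S_n = \sum_{i=1}^n X_i$. By Markov's inequality applied to $\exp(\lambda S_n)$ for an arbitrary $\lambda > 0$, we have
\begin{equation*}
  \P[S_n \ge n \epsilon] \le \exp(-\lambda n \epsilon) \, \E[\exp(\lambda S_n)],
\end{equation*}
so the task reduces to controlling the moment generating function of $S_n$ and then optimizing over $\lambda$.

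To bound $\E[\exp(\lambda S_n)]$, I would peel off one increment at a time using the tower property of conditional expectation. Since $S_{n-1}$ is $\mc{F}_{n-1}$-measurable,
\begin{equation*}
  \E[\exp(\lambda S_n)] \;=\; \E\bigl[\exp(\lambda S_{n-1}) \, \E[\exp(\lambda X_n) \mid \mc{F}_{n-1}]\bigr] \;\le\; \exp\!\left(\tfrac{\lambda^2 \sigma_n^2}{2}\right)\, \E[\exp(\lambda S_{n-1})],
\end{equation*}
where the inequality uses the conditional sub-Gaussian hypothesis. Iterating this step for $i = n, n-1, \ldots, 1$ yields the uniform bound
\begin{equation*}
  \E[\exp(\lambda S_n)] \;\le\; \exp\!\left(\frac{\lambda^2}{2} \sum_{i=1}^n \sigma_i^2 \right).
\end{equation*}

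Combining with the Markov step gives $\P[S_n \ge n\epsilon] \le \exp\bigl(-\lambda n\epsilon + \tfrac{\lambda^2}{2}\sum_i \sigma_i^2\bigr)$. Optimizing over $\lambda > 0$ via the choice $\lambda^\star = n\epsilon / \sum_{i=1}^n \sigma_i^2$ produces the exponent $-\tfrac{n^2 \epsilon^2}{2 \sum_i \sigma_i^2}$, which is exactly $-\tfrac{n \epsilon^2}{2 \sum_i \sigma_i^2/n}$ as claimed in~\eqref{eqn:hoeffding-azuma}. Dividing $S_n$ by $n$ at the end (equivalently, substituting $S_n \ge n\epsilon$ for $\tfrac{1}{n} S_n \ge \epsilon$) completes the proof.

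There is no real obstacle here: everything rests on the conditional sub-Gaussian assumption, which is precisely the tool that lets the tower-property recursion absorb one increment at a time without any independence assumption on the $X_i$. The only points to be careful about are (i) ensuring measurability so that $\exp(\lambda S_{n-1})$ can be pulled outside the conditional expectation, which is immediate from the filtration setup, and (ii) that the optimization over $\lambda$ is unconstrained on $(0,\infty)$ so the quadratic-in-$\lambda$ exponent is minimized at the interior critical point identified above.
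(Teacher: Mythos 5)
Your proof is correct: the Chernoff bound, the tower-property peeling of one increment at a time using the conditional sub-Gaussian hypothesis, and the optimization $\lambda^\star = n\epsilon/\sum_{i=1}^n \sigma_i^2$ together give exactly the exponent $-n\epsilon^2/(2\sum_{i=1}^n \sigma_i^2/n)$ claimed in~\eqref{eqn:hoeffding-azuma}. The paper does not prove this lemma---it defers to the standard references---and your argument is precisely the standard one those references contain, so there is nothing further to compare.
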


The next lemma uses martingale techniques to establish
the sub-Gaussianity of a normed sum:
\begin{lemma}
  \label{lemma:subgaussian-bounded-vectors}
  Let $X_1, \ldots, X_n$ be independent random vectors with
  $\norm{X_i} \le L$ for all $i$. Define $S_n = \sum_{i=1}^n
  X_i$. Then $\norm{S_n} - \E \norm{S_n}$ is sub-Gaussian with
  parameter at most $4nL^2$.
\end{lemma}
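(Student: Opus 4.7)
The plan is to prove the result via the Doob martingale decomposition of $\|S_n\|$, combined with the bounded-differences property that the norm inherits from the constraint $\|X_i\| \le L$. Concretely, let $\mathcal{F}_i = \sigma(X_1,\ldots,X_i)$ with $\mathcal{F}_0$ trivial, and define
\begin{equation*}
  Y_i \defeq \E\bigl[\|S_n\| \mid \mathcal{F}_i\bigr] - \E\bigl[\|S_n\| \mid \mathcal{F}_{i-1}\bigr],
  \qquad i = 1,\ldots,n,
\end{equation*}
so that $\{Y_i\}$ is a martingale difference sequence and $\|S_n\| - \E\|S_n\| = \sum_{i=1}^n Y_i$. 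The goal then reduces to showing each $Y_i$ is conditionally sub-Gaussian with parameter $4L^2$, after which the standard tower-rule argument (used in the proof of Azuma-Hoeffding) gives sub-Gaussianity of the sum with parameter $\sum_{i=1}^n 4L^2 = 4nL^2$.

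The central step is a bounded-differences estimate for $Y_i$. Write $g_i(x_1,\ldots,x_i) \defeq \E[\|S_n\| \mid X_1 = x_1, \ldots, X_i = x_i]$; by independence of the $X_j$, this equals $\E\bigl[\|x_1 + \cdots + x_i + X_{i+1} + \cdots + X_n\|\bigr]$. For any two admissible values $x_i, x_i'$ with $\|x_i\|, \|x_i'\| \le L$, the reverse triangle inequality gives
\begin{equation*}
  \bigl|g_i(x_1,\ldots,x_{i-1},x_i) - g_i(x_1,\ldots,x_{i-1},x_i')\bigr|
  \le \|x_i - x_i'\| \le 2L.
\end{equation*}
Since $g_{i-1}(x_1,\ldots,x_{i-1}) = \E_{X_i}[g_i(x_1,\ldots,x_{i-1},X_i)]$ lies between the conditional infimum and supremum of $g_i(x_1,\ldots,x_{i-1},\cdot)$, we conclude $|Y_i| \le 2L$ almost surely.

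Given that $Y_i$ is conditionally zero-mean with $|Y_i| \le 2L$, I would invoke the standard fact recorded in the paragraph following Definition~\ref{definition:sub-Gaussian} --- that any centered random variable bounded by $c$ is sub-Gaussian with parameter $c^2$ --- applied conditionally on $\mathcal{F}_{i-1}$, to conclude
\begin{equation*}
  \E\bigl[\exp(\lambda Y_i) \mid \mathcal{F}_{i-1}\bigr]
  \le \exp\!\bigl(\lambda^2 \cdot 4L^2/2\bigr) \quad \text{a.s.}
\end{equation*}
Iterating this conditional bound inside $\E[\exp(\lambda \sum_i Y_i)]$ from $i = n$ down to $i = 1$ (pulling out the $\mathcal{F}_{n-1}$-measurable factor at each step) yields $\E[\exp(\lambda(\|S_n\| - \E\|S_n\|))] \le \exp(\lambda^2 \cdot 4nL^2/2)$, which is the claim. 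No step is a serious obstacle; the only point requiring care is verifying that the bounded-differences inequality transfers to the Doob increments $Y_i$, which is why I made the $g_i/g_{i-1}$ bookkeeping explicit above.
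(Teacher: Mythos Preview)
Your proposal is correct and follows essentially the same approach as the paper: both construct the Doob martingale for $\norm{S_n}$ with respect to the filtration $\mathcal{F}_i = \sigma(X_1,\ldots,X_i)$, bound each increment by $2L$, and conclude via the Azuma--Hoeffding mechanism that the sum is sub-Gaussian with parameter $4nL^2$. The only cosmetic difference is how the bound $|Y_i|\le 2L$ is obtained---you use the standard bounded-differences/reverse-triangle-inequality argument, while the paper splits $S_n = S_{n\setminus i} + X_i$ and uses that $\E[\norm{S_{n\setminus i}}\mid \mathcal{F}_i] = \E[\norm{S_{n\setminus i}}\mid \mathcal{F}_{i-1}]$ by independence.
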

\begin{proof}
  The proof follows from the realization that when $\norm{X_i} \le L$,
  the quantity $\norm{S_n} - \E\norm{S_n}$ can be controlled using
  single-dimensional martingale techniques~\cite[Chapter
  6]{LedouxTa91}. We construct the Doob martingale for the sequence
  $X_i$.  Let $\mc{F}_i$ be the $\sigma$-field of $X_1, \ldots, X_i$
  and define the real-valued random variables $Z_i = \E[\norm{S_n}
  \mid \mc{F}_i] - \E[\norm{S_n} \mid \mc{F}_{i-1}]$, where $\mc{F}_0$
  is the trivial $\sigma$-field. Let $S_{n\setminus i} = \sum_{j \neq
  i} X_j$. Then $\E[Z_i \mid \mc{F}_{i-1}] = 0$ and
  \begin{align*}
    |Z_i| & =
    \left|\E[\norm{S_n} \mid \mc{F}_{i-1}]
    - \E[\norm{S_n} \mid \mc{F}_i]\right| \\
    & \le \left|\E\left[\norm{S_{n \setminus i}} \mid \mc{F}_{i-1}\right]
    - \E\left[\norm{S_{n \setminus i}} \mid \mc{F}_i\right]\right|
    + \E[\norm{X_i} \mid \mc{F}_{i-1}]
    + \E[\norm{X_i} \mid \mc{F}_i] \\
    & = \norm{X_i} + \E[\norm{X_i}] \le 2L
  \end{align*}
  since $X_j$ is independent of $\mc{F}_{i-1}$ for $j \ge i$. Thus
  $Z_i$ defines a bounded martingale difference sequence, and
  $\sum_{i=1}^n Z_i = \norm{S_n} - \E[\norm{S_n}]$. Since $|Z_i| \le
  2L$, the $Z_i$ are conditionally sub-Gaussian with parameter at most
  $4L^2$. Thus $\sum_{i=1}^n Z_i$ is sub-Gaussian with parameter at
  most $4nL^2$.
%
\end{proof}

\subsection{Sub-exponential random variables}
A slightly less restrictive tail condition defines the class
of sub-exponential random variables:
\begin{definition}
  \label{def:subexp}
  A zero-mean random variable $X$ is \emph{sub-exponential} with
 parameters $(\subexpbound, \subexpparam)$ if
  \begin{equation*}
    \E[\exp(\lambda X)] \le \exp\left(\frac{\lambda^2 \subexpparam^2}{2}\right)
    \quad \mbox{for all}~ |\lambda| \le \subexpbound.
  \end{equation*}
\end{definition}

\noindent The following lemma provides an equivalent characterization
of sub-exponential variable via a tail bound:
\begin{lemma}
  \label{lemma:concentration-to-subexp}
  Let $X$ be a zero-mean random variable. If there are constants
  $\subexpprob, \subexpexp > 0$ such that
  \begin{equation*}
    \P(|X| \ge t) \le \subexpprob \exp(-\subexpexp t)
    \quad \mbox{for all}~ t > 0
  \end{equation*}
  then $X$ is sub-exponential with parameters $\subexpbound =
  \subexpexp / 2$ and $\subexpparam^2 = 4\subexpprob / \subexpexp^2$.
\end{lemma}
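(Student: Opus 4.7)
The plan is a direct moment-bound calculation. I would expand the moment generating function as a power series, use the zero-mean assumption to eliminate the first-order term, and then control all higher moments uniformly via the tail bound.

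More concretely, the first step is to observe that for any integer $k \geq 1$, the layer-cake formula gives
\begin{equation*}
    \E |X|^k \;=\; \int_0^\infty k t^{k-1} \, \P(|X| \geq t)\, dt \;\leq\; \subexpprob \, k \int_0^\infty t^{k-1} e^{-\subexpexp t}\, dt \;=\; \frac{\subexpprob \, k!}{\subexpexp^k}.
\end{equation*}
Next, I would expand $\exp(\lambda X) = 1 + \lambda X + \sum_{k \geq 2} \lambda^k X^k / k!$, take expectations, use $\E X = 0$, and apply the moment bound to get
\begin{equation*}
    \E[\exp(\lambda X)] \;\leq\; 1 + \sum_{k \geq 2} \frac{|\lambda|^k \subexpprob}{\subexpexp^k} \;=\; 1 + \frac{\subexpprob (|\lambda|/\subexpexp)^2}{1 - |\lambda|/\subexpexp},
\end{equation*}
valid whenever $|\lambda| < \subexpexp$. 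Restricting to $|\lambda| \leq \subexpbound = \subexpexp/2$ guarantees $1 - |\lambda|/\subexpexp \geq 1/2$, which collapses the bound to $1 + 2\subexpprob \lambda^2 / \subexpexp^2$. Finally, the elementary inequality $1 + u \leq e^{u}$ gives
\begin{equation*}
    \E[\exp(\lambda X)] \;\leq\; \exp\!\left(\frac{2\subexpprob \lambda^2}{\subexpexp^2}\right) \;=\; \exp\!\left(\frac{\lambda^2 \subexpparam^2}{2}\right)
\end{equation*}
for $\subexpparam^2 = 4\subexpprob/\subexpexp^2$, which is exactly the sub-exponential condition in Definition~\ref{def:subexp}.

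There is no real obstacle here — everything reduces to a geometric series in $|\lambda|/\subexpexp$, and the only decision is where to truncate $|\lambda|$ so the geometric series is comfortably summable. The choice $|\lambda| \leq \subexpexp/2$ is the natural one since it makes the denominator at least $1/2$, yielding the clean factor of $2$ that produces $\subexpparam^2 = 4\subexpprob/\subexpexp^2$. A slightly tighter $\subexpparam^2$ could be obtained with a different cutoff, but the stated constants are the cleanest.
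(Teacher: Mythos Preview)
Your proposal is correct and follows essentially the same approach as the paper: Taylor-expand the moment generating function, kill the linear term via $\E X = 0$, bound $\E|X|^k$ by the tail integral to get $\subexpprob\, k!/\subexpexp^k$, sum the resulting geometric series, and restrict to $|\lambda| \le \subexpexp/2$ so the series sums to at most $2$. The only cosmetic difference is that the paper writes the layer-cake identity as $\E|X|^k = \int_0^\infty \P(|X|^k \ge t)\,dt$ and then changes variables, whereas you use the equivalent form $\E|X|^k = \int_0^\infty k t^{k-1}\P(|X|\ge t)\,dt$ directly; the resulting moment bound and all subsequent steps are identical.
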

\noindent
The proof of the lemma follows from a Taylor expansion of $\exp(\cdot)$ and
the identity $\E[|X|^k] = \int_0^\infty \P(|X|^k \ge t) dt$ (for similar
results, see Buldygin and Kozachenko~\cite[Chapter I.3]{BuldyginKo00}).
%
\comment{
\begin{proof}
By a Taylor expansion, we have
  \begin{equation*}
    \E[\exp(\lambda X)]
    = 1 + \lambda \E[X]
    + \sum_{k=2}^\infty \frac{\lambda^k \E[X^k]}{k!}
    = 1 + \sum_{k \ge 2} \frac{\lambda^k \E[X^k]}{k!}
  \end{equation*}
  since $\E[X] = 0$. Now we use the bound on $\P(|X| \ge t)$ to control
  the infinite sum and moments of $X$. Indeed, we have
  \begin{equation*}
    \E[X^k] \le \E[|X|^k]
    = \int_0^\infty \P(|X|^k \ge t) dt
    \le \int_0^\infty \subexpprob \exp(-\subexpexp t^{1/k}) dt
  \end{equation*}
  by assumption. The change of variables $u = \subexpexp t^{1/k}$---so
  that $dt = k \subexpexp^{-k} u^{k-1} du$---shows that
  \begin{equation*}
    \E[X^k]
    \le \int_0^\infty \subexpprob \exp(-\subexpexp t^{1/k}) dt
    = k \int_0^\infty \subexpprob \subexpexp^{-k} u^{k - 1} \exp(-u) du
    = k \subexpprob \subexpexp^{-k} \Gamma(k),
  \end{equation*}
  or that $\E[X^k] \le k! \subexpexp \subexpprob^{-k}$. Specifically, we
  have the bound
  \begin{equation*}
    \E[\exp(\lambda X)] \le
    1 + \sum_{k \ge 2} \subexpprob \frac{\lambda^k k!}{\subexpexp^k k!}
    = 1 + \subexpprob\left(\frac{\lambda}{\subexpexp}\right)^2
    \sum_{k = 0}^\infty \left(\frac{\lambda}{\subexpexp}\right)^k.
  \end{equation*}
  For $|\lambda| \le \frac{\subexpexp}{2}$, the last sum is bounded by $2$,
  and we have
  \begin{equation*}
    \E[\exp(\lambda X)]
    \le 1 + 2 \subexpprob \left(\frac{\lambda}{\subexpexp}\right)^2
    \le \exp\left(2\frac{\lambda^2 \subexpprob}{\subexpexp^2}\right).
  \end{equation*}
  Plugging values for $\subexpbound$ and $\subexpparam$ gives the result.
\end{proof}
} 

Lastly, any random variable whose square is sub-exponential is sub-Gaussian,
as shown by the following result:
\begin{lemma}[Lan, Nemirovski, Shapiro~\cite{LanNeSh10}, Lemma 6]
  \label{lemma:subexp-squared-implies-subg}
  Let $X$ be a zero-mean random variable satisfying the moment generating
  inequality $\E[\exp(X^2 / \sigma^2)] \le \exp(1)$. Then
  $X$ is sub-Gaussian with parameter at most $3/2 \sigma^2$.
\end{lemma}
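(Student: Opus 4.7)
The plan is to show $\E[\exp(\lambda X)] \le \exp(\tfrac{3}{4}\lambda^2 \sigma^2)$ for every $\lambda\in\R$, which is exactly the sub-Gaussian condition with parameter $\tfrac{3}{2}\sigma^2$. I would split into two regimes based on the magnitude of $|\lambda|\sigma$ and combine.

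\textbf{Regime of large $|\lambda|\sigma$.} The first step uses the pointwise AM--GM inequality
\begin{equation*}
  \lambda X \;\le\; \frac{\lambda^2 \sigma^2}{4} + \frac{X^2}{\sigma^2},
\end{equation*}
which follows from $2ab\le a^2+b^2$ with $a=\lambda\sigma/2$ and $b=X/\sigma$. Exponentiating, taking expectations, and invoking the hypothesis gives
\begin{equation*}
  \E[\exp(\lambda X)] \;\le\; \exp\!\Big(\frac{\lambda^2\sigma^2}{4}\Big)\,\E\big[\exp(X^2/\sigma^2)\big] \;\le\; \exp\!\Big(1+\frac{\lambda^2\sigma^2}{4}\Big).
\end{equation*}
Whenever $\lambda^2\sigma^2\ge 2$, the constant absorbs: $1\le \lambda^2\sigma^2/2$ and the bound collapses to $\exp(\tfrac{3}{4}\lambda^2\sigma^2)$, as desired. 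Note that this half of the argument does not use $\E[X]=0$.

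\textbf{Regime of small $|\lambda|\sigma$.} For $\lambda^2\sigma^2\le 2$, I exploit the mean-zero assumption to extract a leading $\lambda^2$ factor. Comparing the Taylor series of $\exp$ to the Orlicz hypothesis yields the even-moment bound
\begin{equation*}
  \frac{\E[X^{2m}]}{m!\,\sigma^{2m}} \;\le\; \E[\exp(X^2/\sigma^2)] \;\le\; e,
  \qquad \text{so}\qquad
  \E[X^{2m}]\le e\,m!\,\sigma^{2m}.
\end{equation*}
Using $(2m)!\ge 2^m(m!)^2$, this reorganizes as $\lambda^{2m}\E[X^{2m}]/(2m)! \le e\,(\lambda^2\sigma^2/2)^m/m!$, so the even contribution to the MGF sums to at most $e\exp(\lambda^2\sigma^2/2)$. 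To avoid bookkeeping with odd moments, I would symmetrize: introducing an independent copy $X'$ and applying Jensen gives $\E[e^{\lambda X}]\le \E[e^{\lambda(X-X')}]$, and then $\cosh(y)\le e^{y^2/2}$ together with $(X-X')^2\le 2(X^2+X'^2)$ reduces everything to controlling $\E[e^{\lambda^2 X^2}]$. For $\lambda^2\sigma^2\le 2$, a Jensen estimate $\E[\exp(\lambda^2 X^2)]\le (\E\exp(X^2/\sigma^2))^{\lambda^2\sigma^2}\le e^{\lambda^2\sigma^2}$ applies (because $t\mapsto t^{\lambda^2\sigma^2}$ is concave here), producing $\E[e^{\lambda X}]\le \exp(c\lambda^2\sigma^2)$ with $c\le 3/4$.

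\textbf{Main obstacle.} The easy half is the pointwise AM--GM calculation, which fails to be sub-Gaussian at small $\lambda$ only because of the additive $1$. The delicate half is squeezing the constant $3/4$ out of the small-$\lambda$ regime: the naive symmetrization loses a factor of $2$, and the naive odd-moment bound via Cauchy--Schwarz gives at best a crude $e^{c\lambda^2\sigma^2}$ with $c>3/4$. Getting the sharp constant requires either a careful pairing of odd and even terms in the series expansion, or a one-parameter Young's inequality $\lambda X\le \frac{\lambda^2\sigma^2}{2s}+\frac{s X^2}{2\sigma^2}$ optimized over $s\in(0,1]$ combined with the mean-zero cancellation. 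The final step is then simply to take the maximum of the two regime bounds.
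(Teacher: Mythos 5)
Your large-$|\lambda|$ argument is sound and is in fact a special case of the paper's: the pointwise bound $\lambda X \le \frac{\lambda^2\sigma^2}{4} + \frac{X^2}{\sigma^2}$ is the Fenchel--Young inequality $\lambda X \le \frac{\lambda^2\sigma^2}{2s} + \frac{sX^2}{2\sigma^2}$ at $s=2$, and exponentiating, taking expectations, and absorbing the additive $1$ gives the claim once $\lambda^2\sigma^2 \ge 2$. (The paper optimizes $s$ to $s=4/3$, which lowers the threshold to $\lambda^2\sigma^2 \ge 16/9$; this matters for meeting the other regime, as noted below.)

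The small-$|\lambda|$ half is where the proposal breaks. Chasing your symmetrization route: $\E[e^{\lambda X}] \le \E[e^{\lambda(X-X')}] \le \E[e^{\lambda^2(X-X')^2/2}] \le \E[e^{\lambda^2 X^2}]\,\E[e^{\lambda^2 X'^2}] \le e^{2\lambda^2\sigma^2}$, valid for $\lambda^2\sigma^2 \le 1$ (not $\le 2$ as stated --- the Jensen step requires the exponent $\lambda^2\sigma^2$ on the Orlicz hypothesis to be at most $1$). This gives sub-Gaussian parameter $4\sigma^2$, not the claimed $\tfrac{3}{2}\sigma^2$; the sentence asserting ``$c \le 3/4$'' is simply incorrect, and your obstacle paragraph already concedes the factor-of-$2$ loss. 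On top of the constant being too large, the two regimes you produce do not overlap: the small regime is only valid for $\lambda^2\sigma^2 \le 1$, while the large one only kicks in at $\lambda^2\sigma^2 \ge 2$, leaving an uncovered interval. The missing idea --- which the one-parameter Young's inequality you invoke in the obstacle paragraph does not by itself supply, since Young's inequality is pointwise and never sees $\E[X]=0$ --- is the elementary pointwise inequality $e^x \le x + e^{9x^2/16}$ valid for all $x\in\R$. Applying it with $x=\lambda X$, taking expectations, and using $\E[\lambda X]=0$ kills the linear term outright, leaving $\E[e^{\lambda X}] \le \E[e^{9\lambda^2 X^2/16}] \le e^{9\lambda^2\sigma^2/16}$ by the same Jensen step, valid for $|\lambda| \le 4/(3\sigma)$, i.e.\ $\lambda^2\sigma^2 \le 16/9$. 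Since $9/16 < 3/4$ this gives the right constant, and the threshold $16/9$ meets the optimized Fenchel--Young threshold exactly, closing the gap. Until you supply some analogue of this pointwise device (or a genuinely careful pairing of odd and even terms in the series that you only gesture at), the proof is incomplete.
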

%
\comment{
\begin{proof-of-lemma}[\ref{lemma:subexp-squared-implies-subg}]
  The proof uses the Fenchel-Young inequality, that is, that for any $a, b$
  and $c > 0$, $ab \le \frac{a^2}{2c} + \frac{cb^2}{2}$. We also note that by
  Jensen's inequality, for any $\lambda \in [0, 1]$, $\E[\exp(\lambda X^2 /
    \sigma^2)] \le \exp(\lambda)$. With these facts, we see that
  \begin{equation}
    \E[\exp(\lambda X)] \le \E\left[\exp\left(\frac{\lambda^2 \sigma^2}{2c}
      + \frac{c X^2}{2 \sigma^2}\right)\right]
    \le \exp\left(\frac{\lambda^2 \sigma^2}{2c} + \frac{c}{2}\right)
    \label{eqn:mgf-big-lambda}
  \end{equation}
  for any $\lambda$ and $c \in [0, 2]$. Now we use the fact that
  $\exp(x) \le x + \exp(9x^2 / 16)$ to see that
  \begin{equation}
    \E[\exp(\lambda X)] \le \E[\lambda X] + \E \biggr[\exp \big(\frac{
        9\lambda^2 X^2}{16} \big) \biggr] \le 0 + \exp
    \big(\frac{9\lambda^2 \sigma^2}{16} \big)
    \label{eqn:mgf-small-lambda}
  \end{equation}
  for $\lambda \in [0, 4/3\sigma]$. Recalling the
  bound~\eqref{eqn:mgf-big-lambda}, we see that if we take $c = 4/3$,
  then for $\lambda \ge 4 / 3 \sigma$,
  \begin{equation*}
    \frac{\lambda^2 \sigma}{2c}
    = \frac{3\lambda^2 \sigma^2}{8} \ge \frac{2}{3}
    = \frac{c}{2}.
  \end{equation*}
  In particular, $\frac{\lambda^2 \sigma^2}{2c} + \frac{c}{2} \le \frac{3
    \lambda^2 \sigma^2}{4}$ for $\lambda \ge 4/3\sigma$; combining this with
  the bound~\eqref{eqn:mgf-small-lambda} for small $\lambda$ gives the result.
\end{proof-of-lemma}
} 
\vspace*{.2in}

\bibliographystyle{abbrv}
\bibliography{bib}

\end{document}